\documentclass[11pt]{article}
\usepackage[hypertexnames=false]{hyperref}
\usepackage{amsmath}
\usepackage{amssymb}
\usepackage{amsthm}
\usepackage{authblk}
\usepackage[margin=2.5cm]{geometry}
\usepackage{bbm}
\usepackage{amsthm}
\usepackage{mathtools}
\usepackage[mathscr]{euscript}
\usepackage[utf8]{inputenc}
\usepackage{color}
\usepackage{enumerate}  
\usepackage{bm}

\usepackage{graphicx}

\newlength{\bibitemsep}\setlength{\bibitemsep}{.2\baselineskip plus .05\baselineskip minus .05\baselineskip}
\newlength{\bibparskip}\setlength{\bibparskip}{0pt}
\let\oldthebibliography\thebibliography
\renewcommand\thebibliography[1]{%
  \oldthebibliography{#1}%
  \setlength{\parskip}{\bibitemsep}%
  \setlength{\itemsep}{\bibparskip}%
}

\newcommand{\R}[0]{\mathbb{R}}

\newcommand{\Z}[0]{\mathbb{Z}}
\newcommand{\N}[0]{\mathbb{N}}

\renewcommand{\P}[0]{\mathbb{P}}
\newcommand{\E}[0]{\mathbb{E}}

\newcommand{\sC}[0]{\mathcal{C}}

\newcommand{\sN}[0]{\mathcal{N}}

\newcommand{\sU}[0]{\mathcal{U}}
\newcommand{\sV}[0]{\mathcal{V}}
\newcommand{\sL}[0]{\mathcal{L}}

\newcommand{\Var}[0]{\text{Var\,}}
\newcommand{\Cov}[0]{\text{Cov\,}}

\newtheorem{theorem}{Theorem}

\newtheorem{lemma}{Lemma}
\newtheorem{prop}{Proposition}

\newtheorem{remark}{Remark}
\newtheorem{condition}{Condition}

\allowdisplaybreaks
\numberwithin{equation}{section}
\numberwithin{theorem}{section}
\numberwithin{lemma}{section}
\numberwithin{corollary}{section}
\numberwithin{prop}{section}
\numberwithin{remark}{section}
\numberwithin{condition}{section}

\newcommand{\eps}{\varepsilon}

\begin{document}

\title{Gumbel laws in the symmetric exclusion process}

\author{Michael Conroy\thanks{Mathematics, University of Arizona, Tucson, AZ 85721 USA michaelconroy@math.arizona.edu}\ \   and\, Sunder Sethuraman\thanks{Mathematics, University of Arizona, Tucson, AZ 85721 USA sethuram@math.arizona.edu}}

\date{}

\maketitle

\begin{abstract}

We consider the symmetric exclusion particle system on $\Z$ starting from an infinite particle step configuration in which there are no particles to the right of a maximal one.  We show that the scaled position $X_t/(\sigma b_t) - a_t$ of the right-most particle at time $t$ converges to a Gumbel limit law, where $b_t = \sqrt{t/\log t}$, $a_t = \log(t/(\sqrt{2\pi}\log t))$, and $\sigma$ is the standard deviation of the random walk jump probabilities.  This work solves a problem left open in Arratia (1983).

Moreover, to investigate the influence of the mass of particles behind the leading one, we consider initial profiles consisting of a block of $L$ particles, where $L \to \infty$ as $t \to \infty$. Gumbel limit laws, under appropriate scaling, are obtained for $X_t$ when $L$ diverges in $t$. In particular, there is a transition when $L$ is of order $b_t$,
above which the displacement of $X_t$ is similar to that under a infinite particle step profile, and below which it is of order $\sqrt{t\log L}$.  

Proofs are based on recently developed negative dependence properties of the symmetric exclusion system.
Remarks are also made on the behavior of the right-most particle starting from a step profile in asymmetric nearest-neighbor exclusion, which complement known results.

\end{abstract}

\smallskip
{\small
\noindent {\it{Keywords. }}{interacting particle system, exclusion, symmetric, asymmetric, SSEP, ASEP, Gumbel, extreme value, tagged particle, maximum, negative association, step profile}.

\smallskip
\noindent {\it 2020 Mathematics Subject Classification.} 60K35, 60F05. 
}

\section{Introduction}

We consider the symmetric exclusion process on $\Z$ with irreducible, translation invariant transition probabilities 
Let $\{p_i\}$ be irreducible, translation-invariant transition probabilities,
\[
	p_i := p(x, x + i) = p(x, x-i), \qquad x, i\in\Z,  
\]
such that $\sum_i p_i = 1$. 
Informally, the system consists of a typically infinite number of particles that move individually as continuous-time random walks with jump rates $\{p_i\}$ subject to the interaction that jumps to already occupied sites are suppressed. 
Such a process, in the case of nearest-neighbor interaction, models single-file diffusion of particles whose movement is limited by its neighbors. There are also interpretations of the process in terms of queues, fluid flows, traffic, and other phenomena. For general discussions, including its history, see \cite{DeMPre, KipLan99, Lig85, Lignew, Spo91}.

More formally, the process $\{\eta_t: t \ge0\}$ takes values in $\{0,1\}^\Z$, following the unlabled evolution of the particles where, at time $t$, $\eta_t(x)=1$ if $x$ is occupied and $\eta_t(x)=0$ if $x$ is vacant.  The system has Markov generator $\sL$ given by 
\begin{equation}\label{eq:Exgen}
	\sL f(\eta) = \sum_{x,y \in \Z} p_{y-x} \eta(x)(1-\eta(y))
	\left[f(\eta^{x,y}) - f(\eta)\right], 
\end{equation}
for functions $f$ that depend on $\eta(x)$ for only finitely many $x$, 
where 
\[
	\eta^{x,y}(z) = \left\{ \begin{array}{ll} \eta(y) & \;\mbox{if}\; z = x, \\
								\eta(x) & \;\mbox{if}\; z = y, \\
								\eta(z) & \;\mbox{if}\; z \ne x,y.  \end{array} \right. 
\]

We introduce the initial conditions of interest as follows. Consider a Bernoulli product measure
\begin{equation}\label{eq:BernIC}
	\nu(\eta(x) = 1) = \rho_x, \qquad \eta \in \{0,1\}^\Z, \quad x \in \Z, 
\end{equation}
for some $\{\rho_x\} \subset [0,1]$ with $\rho_0 = 1$ and $\rho_x = 0$ for $x > 0$. Since all particles initially start to the left of the origin, this type of initial condition is referred to as a ``step.''  When
$\rho_x = 1$ for all $x \le 0$, we will call this deterministic initial profile as a ``full-step'' profile.  

Note that since $\rho_0=1$, a particle is guaranteed to be initially at the origin. 
We study the behavior of the right-most or maximum particle, which has position at time $t$ given by
\[
	X_t =\max\{ x : \eta_t(x) = 1\}, 
\]
and for which $X_0 = 0$. In the case of nearest-neighbor interaction $p_{\pm 1} = 1/2$, $X_t$ tracks the movement of the particle initially at the origin, since no two particles can change order. Such a particle is referred to as a tagged or tracer particle.

\subsection{Main problem and context}
In Arratia \cite[Theorem 3]{Arr83}, to compare with the motion of a tagged particle in exclusion,
it was shown that when $X_t$ is the right-most position of a system of {\em independent}, symmetric, nearest-neighbor random walks with initially a particle at every integer $k \le 0$, then
\begin{equation}\label{eq:RWGumbel}
	\lim_{t\to\infty} P\left( \frac{X_t}{b_t} - a_t \le x\right) = e^{-e^{-x}}
\end{equation}
for all $x\in \R$, where 
\begin{equation}\label{eq:ArrScaling}
	a_t = \log \left(\frac{t}{\sqrt{2\pi}\log t}\right) \qquad\mbox{and}\qquad b_t = \sqrt{\frac{t}{\log t}};
\end{equation}
see also \cite{Mikosch} in this context.
A version of \eqref{eq:RWGumbel} for single-file diffusions, written as the limit of the maximum of independent Brownian motions, is also known \cite{Sab07}.

A question left open
in \cite{Arr83} is whether the same result holds when $X_t$ is the position of the tracer particle with $X_0=0$
in the simple exclusion system with the same step initial condition. This is motivated in part by the results in \cite{Arr83} that in both cases, $X_t$ has the scaled law of large numbers limit 
\begin{equation}
\label{Arratia-LLN}
	\frac{X_t}{\sqrt{t}} - \sqrt{\log t} \to 0 \qquad\mbox{a.s.,}\qquad t\to\infty, 
\end{equation}
and 
also that $\{b_t^{-1}X_t - a_t: t>0\}$ is tight for the exclusion system. 
It was also noted that this scaling may be inferred through a large deviation analysis in \cite{ImaMalSas17}. However, establishing that the limit in \eqref{eq:RWGumbel} holds for the exclusion system, consisting of infinite interacting random walks on $\Z$, has remained open. Among our results is confirmation that the tagged particle obeys \eqref{eq:RWGumbel}.

Much of the interest and difficulty of this question is that it considers a strongly out-of-equilibrium initial profile, which blocks left jumps of the tagged particle, forcing anomalous motion to the right.  Indeed, the tagged particle has no interaction on one side and under the full-step initial profile cannot go left of the origin.
In particular, by \eqref{Arratia-LLN}, it moves in time $t$ in a space scale $\sqrt{t\log t}$, much greater than the diffusive $\sqrt{t}$ scale of an isolated random walk or the hydrodynamic evolution of the ``bulk'' mass density; see Section VIII.5 in \cite{Lig85}.  

 The advent of the Gumbel law is in contrast to other known limit theorems for a tagged particle in the exclusion particle system on $\Z$.
 In random matrix theory, however, we comment that Gumbel limits have been derived, such as for the edge behavior of eigenvalues in certain ensembles \cite{Joh2007}, which might be seen as a type of continuous space exclusion process.

In the non-degenerate situation of a tracer surrounded by a density of particles, Arratia \cite{Arr83} showed that sub-diffusive $t^{1/4}$ scaling holds for nearest-neighbor interaction. In particular, when the initial product measure is given by $\nu(\eta(x) = 1) = \rho$ for all $x \ne 0$ in $\Z$ and $\nu(\eta(0)=1)=1$, the position $X_t$ of the particle beginning at the origin satisfies
\[
	t^{-1/4} X_t \Rightarrow \sN\left(0, \sqrt{\frac{2}{\pi}} \frac{1-\rho}{\rho}\right), \qquad t \to \infty. 
\]
This result was updated to a fractional Brownian motion, with Hurst parameter ${1}/{4}$, process limit in \cite{PelSet08}; see also \cite{JG} for an extension to a variable diffusion system.

Further, a central limit theorem for the symmetric, nearest-neighbor case on $\Z$ was proved in \cite{JarLan06}, starting from a Bernoulli product measure $\nu^N(\eta(x) = 1) = \rho_0(x/N)$ for a profile $\rho_0: \R \to [0,1]$ with $4$ bounded derivatives. It is shown, in various senses, that as $N \to \infty$, 
\[	
	\frac{X_{Nt}}{\sqrt{N}} \rightarrow  u_t\qquad \mbox{and}\qquad \frac{X_{Nt} - \sqrt{N}u_t}{N^{1/4}} \Rightarrow W_t, 
\]
where $W_t$ is a Gaussian process with a certain covariance structure and $u_t$ satisfies
$\int_0^\infty \big(\rho(t, u) - \rho_0(u)\big)du = \int_0^{u_t} \rho(t, u)du$ with $\rho(t,u)$ the solution of a heat equation with initial condition $\rho_0$.  We observe if $\rho_0(u)=0$ for $u>0$, then $u_t\equiv \infty$, in line with Arratia's law of large numbers \eqref{Arratia-LLN}. The result in \cite{JarLan06} is nontrivial only if $\rho_0$ corresponds to a density of particles on both sides of the origin, excluding the step profiles we consider here.

We remark that large deviation results have been established for the tagged particle in one-dimensional symmetric nearest-neighbor exclusion under certain local-equilibrium initial profiles in \cite{SetVar13}.   
In \cite{ImaMalSas17}, as mentioned earlier, an explicit large deviation function was found for the tagged particle starting from the step initial measure $\nu(\eta(x) = 1) = \rho_-$ for $x \le 0$ and $\nu(\eta(x) = 1) = \rho_+$ for $x > 0$ and $\rho_-\ne \rho_+$. 

In contrast, in the non-nearest-neighbor finite-range case, the tagged particle under stationarity obeys diffusive scaling:
In \cite{KipVar86}, for symmetric, non-nearest-neighbor finite-range exclusion starting from a stationary Bernoulli initial profile $\nu(\eta(x) = 1) = \rho$ for all $x\in \Z$, 
it is shown that the position $X_t$ of a tagged particle satisfies 
\begin{equation}\label{eq:kipvar}
	\frac{X_{N t} - E[X_{N t}]}{\sqrt{N}} \Rightarrow \kappa B_t, \qquad N \to \infty, 
\end{equation}
a functional central limit theorem, where $B_t$ is a Brownian motion and $\kappa^2$ is a certain ``self-diffusion'' coefficient; such a result also holds more generally for finite-range symmetric exclusion $\Z^d$.

We mention in passing that large deviation results in the non-nearest neighbor finite-range and higher dimensional setting have been considered in \cite{QRV}.  See also \cite{Jara}, for stable law limits for the tagged particle in symmetric exclusion with long-range jump rates.  Results for a driven tagged particle in symmetric exclusion on $\Z$ include \cite{Lan-Vol-Oll}, \cite{Wang2}.

\subsection{Discussion of results}

We will consider a general periodic step initial profile where $\rho_{x-m} = \rho_x$ for some $m \in \N_+=\{1,2,\ldots\}$ and every $x \le -1$, where at least one of $\rho_{-1}, \ldots, \rho_{-m}$ is positive (see Condition \ref{ic}). Such profiles correspond to a variety of deterministic (when $\rho_x = 0$ or $1$) as well as random initial conditions (when $0<\rho_x<1$). 

One reason to consider such profiles is that there will be an infinite number of particles in the system, which is necessary for our results to hold (see comments below \eqref{eq:firststir}).  Indeed, with only a finite number of particles in the system, diffusive scaling can be inferred for their positions, a different category not considered in this paper; see, in the nearest-neighbor setting, \cite{Aslangul}, \cite{Lizana-Am}, and \cite{Rodenbeck}.

 Another reason to consider a Bernoulli product initial condition is that the subsequent exclusion evolution will be ``strong Rayleigh'' as discussed in Subsection \ref{neg_dep_subsect}.  Further, one more reason is that such initial profiles allow for some computations, though involved (see Sections \ref{meancomp} and \ref{covcomp}). 
However, our results are robust in that perturbations from periodicity do not affect their conclusions; see Remark \ref{robust}.

Also, we will assume that the jump distribution $\{p_i\}$ has a finite moment generating function $\sum_ie^{\theta i}p_i<\infty$ for some $\theta>0$ (Condition \ref{mgf}).   This condition certainly allows for finite-range interactions as well as infinite-range ones. 

Now let $\sigma^2$ be the variance of $\{p_i\}$, and recall $a_t$ and $b_t$ in \eqref{eq:ArrScaling}.
We show in Theorem \ref{main} that under these assumptions, the scaled position 
\begin{align}
\label{gen-scaling}
\frac{X_t}{\sigma b_t} - a_t
\end{align} converges in distribution to a Gumbel law that reflects the initial condition and jump distribution, as $t \to \infty$.  As noted in Remark \ref{mth}, more generally, the $m$th order statistic of the process converges weakly to a related law involving a certain Poisson distribution. We comment that the limit \eqref{eq:RWGumbel} for the tagged particle in the nearest-neighbor setting when $\sigma^2=1$ is a case of these results.

A second goal of this work is to investigate a natural question that arises: how sensitive is the leading particle behavior to the size of the block of particles behind it?  In other words, how large should the size of the block be to force the leading particle to move in anomalous scale to the right?
To probe this sensitivity, we consider a sequence of exclusion processes with (time-dependent) periodic Bernoulli ``$L$-step'' initial conditions $\nu_L$, where $\nu_L(\eta(x) = 1) = 0$ for $x \notin \{-L, \ldots, 0\}$ for $L = L(t) \to \infty$ as $t \to\infty$. The rate at which $L \to \infty$ determines the limiting behavior of $X_t$ as $t \to \infty$. 

In particular, the same Gumbel limit as in the full step case holds when $L \sqrt{\log t/t} \to \infty$. 
However, we derive a different Gumbel law for \eqref{gen-scaling} in the same scaling reflecting the size of the ratio $L/ \sqrt{t/\log t}$ when $L \sim \sqrt{t/\log t}$ as $t \to \infty$.
Moreover, when $L \to \infty$ such that $L = o(\sqrt{t/\log t})$, we find different $L$-dependent scalings under which
another Gumbel law is obtained for \eqref{gen-scaling} in the limit, namely $a_t = a_t^{(L)}$ and $b_t=b_t^{(L)}$ for 
\begin{equation}\label{eq:Lscaling}
	a^{(L)}_t = \log \left( \frac{L^2}{\sqrt{2\pi \log L^2}}\right) \qquad\mbox{and}\qquad b^{(L)}_t = \sqrt{\frac{t}{\log L^2}}. 
\end{equation}
In this case, the displacement of the leading particle is of order $\sqrt{t \log L}$ rather than $\sqrt{t\log t}$ as in the first two limits. 

The transition at $b_t=\sqrt{t/\log t}$, which may be interpreted as the order of the standard deviation of $X_t$ in the limit \eqref{eq:RWGumbel}, indicates that the size $L$ of the block of particles needed for the rightmost particle to behave as if it were trailed by infinitely many particles. Equivalently, it is essentially only the particles within $O(\sqrt{t/\log t})$ distance behind the leader that have significant influence on its behavior. 
This result is presented in Theorem \ref{mainLthm}.

\subsection{Proof technique}\label{prooftech}

Our analysis of $X_t$ in Theorems \ref{main} and \ref{mainLthm} is based on recently established negative association properties for the exclusion process on $\Z$. 
In particular, we look at the limiting behavior of the process
\begin{equation}\label{eq:N}
	N_t = \sum_{k > z} \eta_t(k) 
\end{equation}
where $z = \sigma b_t(x+a_t)$ for $x\in \R$ and $\sigma^2 = \sum_i i^2 p_i$.
In words, $N_t=N_t(x)$ counts the number of particles that have moved to the right beyond $z=z(t,x)>0$ at time $t$. $N_t$ has a convenient relationship to the position of the leading particle in that $X_t \le z$ if and only if no particles lie to the right of $z$. That is, 
\[
	\left\{ \frac{X_t}{\sigma b_t} - a_t \le x \right\} = \{X_t \le z\} = \{N_t = 0\}. 
\]
Such a relation was used in \cite{Arr83} to show the law of large numbers \eqref{Arratia-LLN} and tightness of $(\sigma b_t)^{-1}X_t-a_t$ (in the nearest-neighbor, full step setting) by bounding $\sup_tP(N_t\ge1) \le \sup_tE[N_t]<\infty$.

To identify the limiting distribution of the scaled leading-particle process, it would be enough to determine the limiting distribution of $N_t$. In this sense, our contribution is to determine appropriate scalings $a_t$ and $b_t$ in the settings of Theorems \ref{main} and \ref{mainLthm} and show that $N_t$ converges as $t \to \infty$ to a Poisson random variable with a parameter $\lambda_x$, depending on the initial condition and jump probabilities.
Then, $N_t \Rightarrow \mbox{Poisson}(\lambda_x)$ implies 
\[
	P\left( \frac{X_t}{\sigma b_t} - a_t \le x \right) \to P\left( \mbox{Poisson}(\lambda_x) = 0\right) = e^{-\lambda_x}. 
\]

As we show in Theorem \ref{ArrMean}, $E[N_t] \to \sigma e^{-x}$ as $t\to\infty$ in the full-step setting, which then gives the Gumbel cumulative distribution function $e^{-\lambda_x} = e^{-\sigma e^{-x}}$. In this case and in others, computing the limit of $E[N_t]$ for initial measure $\nu(\eta(k) = 1) = \rho_k$ is aided by the representation 
\begin{equation}\label{eq:firststir}
	E[N_t] = \sum_{i \le 0} \rho_iP(\xi_i(t) > z), 
\end{equation}
where each $\xi_i$ is a random walk based on $\{p_i\}$ with $\xi_i(0)=i$. This comes from the ``stirring'' construction of the process $\eta_t$, which we discuss in more detail in Subsection \ref{stirring}. We note here that Condition \ref{ic} implies that $\sum_{i\le0}\rho_i = \infty$, i.e., there are an infinite number of particles in the system. With only a finite number, $\sum_{i\le0}\rho_i < \infty$, in which case the mean in \eqref{eq:firststir} goes to zero as $t\to\infty$ with $z = \sigma b_t(x+a_t)$ for the scaling in \eqref{eq:ArrScaling} or \eqref{eq:Lscaling}, and there would be no nontrivial Poisson limit for $N_t$. 

In a system of independent particles, $N_t$ is the sum of independent Bernoulli random variables, and so showing a Poisson limit essentially comes down to computing the limit of $E[N_t]$, which can be done for periodic initial step conditions, as is done in \cite{Arr83} for the ``full step'' initial condition. 
However, in the exclusion system, $N_t = \sum_{i\leq 0}\eta_0(i)1_{\{\xi_i(t)>z\}}$ is a sum involving correlated stirring variables.  Then, to deduce Poisson limits,
we also must show that
the correlations between each pair of particle positions vanish in the limit.

 This is a challenging problem, and occupies a significant part of our analysis. We use a relatively recently developed theory of ``strong Rayleigh'' and negative association properties for the symmetric exclusion process \cite{BorBraLig09,Lig09,Van10}, which allows for Poisson limit theorems despite the dependence induced by exclusion interaction 
  (see
Subsection \ref{neg_dep_subsect}). Also, duality properties for symmetric exclusion and semigroup inequalities between symmetric exclusion and the system of independent random walks are used to rewrite the sum of correlations between particle positions in an analyzable form (see Subsection \ref{duality}).   
  
  In particular, after this formulation, the proof consists of involved combinations of sharp random walk local limit theorems and tail estimates, facilitated by Condition \ref{mgf}, to deduce the result (see Section \ref{covcomp}).
  In the scheme of the estimates, we need take account of the different scalings in our main theorems as well as the shapes of the initial distributions, whether a full step or $L$-step, to deduce the desired correlation bounds. 
These arguments are given for $\{p_i\}$ with finite moment generating function, however we provide improved rates of convergence when $\{p_i\}$ is finite range.

\subsection{A remark on ASEP}\label{comASEP}

Fewer results are known for the tagged particle in the asymmetric simple exclusion process (ASEP). 
With respect to nearest-neighbor interactions with $p = p(x,x+1) = 1 - p(x, x-1) = 1 - q$ and $p \ne q$,
one can consider the limiting behavior of the tagged particle, initially at the origin, when the system starts from a full-step initial profile.  The generator for the process, acting on functions that depend on a finite number of occupation variables, is given by
$$\sL_{ASEP}f(\eta) = \sum_{x\in \Z} \left\{p\left(f(\eta^{x,x+1})-f(\eta)\right) + q\left(f(\eta^{x,x-1})-f(\eta)\right)\right\}.$$
For the case of $p > q$, the tagged particle was studied 
in
\cite{TraWid09}:  interestingly,
 under the expected diffusive scaling, the tagged particle position converges to a non-Gaussian limit given in terms of a determinantal formula. Related limits are also stated in 
\cite{TraWid11} for other types of step profiles, such as alternating ones.

Although we focus on the symmetric system in this article, we may make a remark to complete the picture for the nearest-neighbor, full step situation by
considering $p < q$ and $\rho_x = 1$ for $x \le 0$ and $\rho_x=0$ otherwise.  Here, since $p<q$, the tagged particle does not wander far from the ``bulk''.
Spacings between particles in the exclusion process with nearest-neighbor interaction can be mapped to a zero range process $\{\zeta_t\}$ on $\N$ with an infinite well of particles at $x=0$. 
The position of the tagged particle initially at the origin can then be expressed as $X_t = \sum_{x\ge1}\zeta_t(x)$.
One expects then that the law of $X_t$ should
converge weakly, without scaling as $p<q$, to $\mu(\sum_{x\ge 1}\zeta(x) \in \cdot)$ in terms of an invariant measure $\mu$ for the zero range process. This is formulated in Proposition \ref{ASEP}.

We comment in passing that, in contrast, when starting under stationarity, it was shown in \cite{Kip86} that for the nearest-neighbor process and under diffusive scaling, the centered position $X_t$ is asymptotically normal when the system is started from a stationary Bernoulli$(\rho)$ product measure. In the totally asymmetric case of $p = 1$ and when $\rho < 1$, this result is a consequence of Burke's theorem from queueing theory.
The result in \cite{Kip86} was improved in \cite{FerFon96} and \cite{Goncalves}, wherein it was shown that the properly centered and scaled process $X_t$ converges as a process to Brownian motion. 

Moreover, other central limit theorems of type \eqref{eq:kipvar} under stationary initial condition were proved in \cite{SetVarYau00} and \cite{Var95} with respect to $d\geq 3$ and mean-zero non-nearest neighbor asymmetric systems, respectively.
We also refer to \cite{Set06} for a tagged particle variance calculation showing diffusivity in low dimensions.  
See also \cite{Rezakhanlou,Saada} for a law of large numbers for the tagged particle in asymmetric exclusion starting from initial conditions with a macroscopic density of particles around the origin and \cite{Grig,Wang1} for results on variable-speed or driven tagged particles in asymmetric systems.  In \cite{Ferrari}, a review of some of these and other results are presented.

\subsection*{Notation}
We now list some notation and conventions we will follow in the article. For a probability measure $\nu$ on $\{0,1\}^{\Z}$, $\P_\nu$ denotes the probability measure on the sample space under which $\eta_0$ is distributed according to $\nu$, and $\E_\nu$ denotes the corresponding expectation. Throughout, $\{\xi_t\}$ will denote a continuous-time random walk with jump distribution $p_i$. For $y \in \Z$, $P_y$ denotes the probability measure under which $\xi_0 = y$. Unless otherwise stated, $X$ denotes a standard normal random variable. For sequences $x_t$ and $y_t$, $x_t \sim y_t$ as $t \to\infty$ means $\lim_{t\to\infty} x_t/y_t = 1$, $x_t = O(y_t)$ as $t \to \infty$ means $x_t \le C y_t$ for some positive constant $C$ and sufficiently large $t$, and $x_t = o(y_t)$ as $t \to \infty$ means $\lim_{t\to\infty} x_t/y_t = 0$. When it is clear from context, we will suppress $t \to \infty$ and simply write $x_t \sim y_t$, $x_t = O(y_t)$, and $x_t=o(y_t)$.

\subsection*{Contents}
We state and remark on the main results, Theorems \ref{main}, \ref{mainLthm}, and Proposition \ref{ASEP}, in Section \ref{mainresults}.  After preliminaries on the stirring process representation, negative association and Poisson limits, and duality properties of symmetric exclusion processes in Section \ref{propsofex}, we give an outline of the proofs of Theorems \ref{main} and \ref{mainLthm} in Section \ref{proofs}, which makes use of results on mean limits and covariance bounds in Sections \ref{meancomp} and \ref{covcomp}.  Finally, collected in the Appendix are the random walk probability estimates used.

\section{Main results}\label{mainresults}

Here we present our main results, which are proved in Section \ref{proofs}. For all of what follows, we assume the following on the jump distribution. 
\begin{condition}\label{mgf} For some $\theta > 0$, $\sum_{i\in \Z} e^{\theta i}p_i < \infty$.
\end{condition}
Recall that $\sigma^2 = \sum_i i^2 p_i$. 
The following condition defines the class of periodic step initial profiles we consider. 
\begin{condition}\label{ic} The initial measure $\nu$ on $\{0,1\}^\Z$ satisfies $\nu(\eta(x) = 1) = \rho_x \in [0,1]$ for each $x \in \Z$, where 
\begin{enumerate}
\item[(a)] $\rho_0 = 1$ and $\rho_x = 0$ for $x > 0$, and 
\item[(b)] For some integer $m \ge 1$, $\rho_i = \rho_{i-m}$ for each $i \le -1$ and $\rho_{-1}, \rho_{-2}, \ldots, \rho_{-m}$ are not all $0$. 
\end{enumerate}
\end{condition}

For an initial condition satisfying Condition \ref{ic}, let 
\begin{equation}\label{eq:rhobar}
	\bar\rho = \frac{1}{m}\sum_{j=1}^m \rho_{-j}. 
\end{equation}
Our first result is the following, which solves the open problem in \cite{Arr83} as a special case when $\rho_i = 1_{\{i\leq 0\}}$ and $p_{-1}=p_1=1/2$. 

\begin{theorem}\label{main} Suppose that $\nu$ satisfies Condition \ref{ic}, and let $\bar\rho$ be as in \eqref{eq:rhobar}. 
Then for all $x\in \R$, 
\[
	\lim_{t\to\infty}\P_\nu\left( \frac{X_t}{\sigma b_t} - a_t \le x\right) = \exp\left( - \sigma \bar\rho e^{-x} \right) , 
\]
where $a_t$ and $b_t$ are the scalings in \eqref{eq:ArrScaling}. 
\end{theorem}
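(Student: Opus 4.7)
The plan is to implement the scheme outlined in Subsection~\ref{prooftech}. Fix $x \in \R$, set $z = \sigma b_t(x + a_t)$ and $N_t = \sum_{k > z}\eta_t(k)$. Since $\{X_t \le z\} = \{N_t = 0\}$, the theorem reduces to showing $N_t \Rightarrow \mathrm{Poisson}(\lambda_x)$ with $\lambda_x := \sigma\bar\rho e^{-x}$, from which $\P_\nu(X_t \le z) = \P_\nu(N_t = 0) \to e^{-\lambda_x}$. The argument has two parts: a mean computation and a covariance bound.

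For the mean, I would use the stirring representation \eqref{eq:firststir}, namely $\E_\nu[N_t] = \sum_{i\le 0}\rho_i P(\xi_i(t) > z)$. Since $z \sim \sigma\sqrt{t\log t}$ and $\xi_i(t) - i$ has variance $\sigma^2 t$, each individual probability lies in a moderate-deviation regime. Under Condition~\ref{mgf}, a sharp tail estimate (local CLT together with exponential Chernoff control) gives
\[
    P(\xi_i(t) > z) \sim \frac{\sigma\sqrt{t}}{(z-i)\sqrt{2\pi}}\exp\!\left(-\frac{(z-i)^2}{2\sigma^2 t}\right)
\]
uniformly for $i \le 0$ in the relevant range. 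Approximating the sum by an integral via $u = (z-i)/(\sigma\sqrt{t})$, and averaging the periodic profile against the slowly varying integrand, converts $\E_\nu[N_t]$ into a Riemann approximation of $\sigma\bar\rho\sqrt{t}\int_{(x+a_t)/\sqrt{\log t}}^\infty \frac{e^{-u^2/2}}{u\sqrt{2\pi}}\,du$; expanding using \eqref{eq:ArrScaling} yields $\E_\nu[N_t] \to \sigma\bar\rho e^{-x}$. This is the content of Theorem~\ref{ArrMean} in Section~\ref{meancomp}.

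To upgrade mean convergence to Poisson convergence, I would use that the law of $\eta_t$ under a Bernoulli product initial condition remains \emph{strong Rayleigh}, hence negatively associated, by \cite{BorBraLig09} (see Subsection~\ref{neg_dep_subsect}). For sums of negatively associated Bernoulli random variables, a standard factorial-moment (or Chen--Stein) criterion gives $N_t \Rightarrow \mathrm{Poisson}(\lambda_x)$ provided $\E_\nu[N_t] \to \lambda_x$ and
\[
    \sum_{\substack{j,k > z \\ j \ne k}}\bigl|\mathrm{Cov}_\nu(\eta_t(j), \eta_t(k))\bigr| \longrightarrow 0.
\]
Negative association forces each covariance to be non-positive, so only an upper bound on this sum is needed.

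The main obstacle is precisely this covariance estimate, carried out in Section~\ref{covcomp}. Here I would exploit self-duality of the symmetric exclusion process (Subsection~\ref{duality}) together with the semigroup comparison between two exclusion walkers and two independent random walks: the difference is controlled through the interaction events between the dual walkers, yielding a representation of $\mathrm{Cov}_\nu(\eta_t(j), \eta_t(k))$ as a bilinear functional of the one-particle transition kernel convolved against the initial profile. Applying the same Gaussian tail estimates as in the mean computation, with Condition~\ref{mgf} providing the control needed in the large-deviation region where $j,k > z$, shows that the double sum is $o(1)$ as $t \to \infty$. Combined with the mean asymptotic, this yields $N_t \Rightarrow \mathrm{Poisson}(\sigma\bar\rho e^{-x})$ and hence the claimed Gumbel law.
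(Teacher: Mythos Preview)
Your approach is essentially the paper's: reduce to $N_t \Rightarrow \mathrm{Poisson}(\sigma\bar\rho e^{-x})$, compute the mean via stirring and moderate-deviation tail asymptotics (Theorem~\ref{ArrMean}), and control the covariance sum via duality and the $V_2$--$U_2$ semigroup comparison (Section~\ref{covcomp}).

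There is, however, one missing ingredient. Your stated criterion---mean convergence plus $\sum_{j\ne k}|\Cov_\nu(\eta_t(j),\eta_t(k))| \to 0$---is not sufficient for a Poisson limit, even under negative association. Writing the second factorial moment as
\[
\E_\nu[N_t(N_t-1)] = \E_\nu[N_t]^2 - \sum_{k>z}\bigl(\E_\nu[\eta_t(k)]\bigr)^2 + \sum_{j\ne k}\Cov_\nu(\eta_t(j),\eta_t(k)),
\]
one sees that convergence to $\lambda_x^2$ also requires $\sum_{k>z}(\E_\nu[\eta_t(k)])^2 \to 0$. This is exactly condition~(ii) of Lemma~\ref{liggettpoisson}, and the paper checks it separately in Lemma~\ref{SoS} via the bound $\E_\nu[\eta_t(k)] \le P_0(\xi_t \ge k)$, giving $\sum_{k>z}(\E_\nu[\eta_t(k)])^2 \le P_0(\xi_t>z)\,\E_\nu[N_t] = O(e^{-z^2/(2\sigma^2 t)})$. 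It is easy, but it has to be said.

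A second point worth noting is that the paper does not carry the periodic profile through the covariance computation. Instead, Lemma~\ref{fullstepdom} uses Arratia's negative correlation of stirring indicators (Lemma~\ref{stirringdep}) to show that $\sum_{k>z}(\E_\nu[\eta_t(k)])^2 + \sC_t(\nu,z)$ is dominated by the same quantity for the deterministic full-step configuration $\eta$. This reduces the covariance estimate (Theorem~\ref{fullcov}) to the single case $\rho_i = 1_{\{i\le 0\}}$, avoiding the need to track $\rho$ through the duality formula you describe. Your direct approach would work in principle, but the reduction is a genuine simplification.
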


Our next result probes the sensitivity of the limit in Theorem \ref{main} to the number of particles behind the lead particle, as discussed in the introduction, with respect to a sequence of processes with the ``$L$-step'' initial conditions. 

\begin{theorem}\label{mainLthm} 
Suppose that $\nu$ satisfies Condition \ref{ic} and let $\bar \rho$ be as in \eqref{eq:rhobar}. For $L=L(t) > 0$, define the measure $\nu_L$ on $\{0,1\}^\Z$ by 
\[
	\nu_L(\eta(k) = 1) = \left\{\begin{array}{cl} \rho_k &\mbox{if}\;\; -L \le k \le 0, \\ 0 &\mbox{otherwise}, \end{array}\right. 
\]
and suppose that $L \to \infty$ as $t \to \infty$. 
\begin{enumerate}
\item[(a)] If $L \sqrt{\frac{\log t}{t}} \to c \in (0, \infty]$ and $a_t$ and $b_t$ are as in \eqref{eq:ArrScaling}, then 
\[	
	\lim_{t\to\infty}\P_{\nu_L}\left( \frac{X_t}{\sigma b_t} - a_t \le x \right) = \exp\left(-\sigma\bar\rho(1 - e^{-c/\sigma})e^{-x}\right). 
\]

\item[(b)] If $L \sqrt{\frac{\log t}{t}} \to 0$ and $a_t=a^{(L)}_t$ and $b_t=b_t^{(L)}$ are as in \eqref{eq:Lscaling}, then 
\[	
	\lim_{t\to\infty}\P_{\nu_L}\left( \frac{X_t}{\sigma b_t} - a_t \le x \right) = \exp\left(-\bar\rho e^{-x} \right). 
\]
\end{enumerate}
\end{theorem}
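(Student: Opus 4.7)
The plan is to extend the scheme of Theorem \ref{main} (as described in Subsection \ref{prooftech}) to the two regimes of $L$. In each case, fix $x \in \R$, set $z = \sigma b_t(x + a_t)$ with the scalings \eqref{eq:ArrScaling} in part (a) and \eqref{eq:Lscaling} in part (b), and define $N_t = \sum_{k > z}\eta_t(k)$, so that $\{X_t/(\sigma b_t) - a_t \le x\} = \{N_t = 0\}$. Since $\nu_L$ is a Bernoulli product measure, the law of $\eta_t$ under $\P_{\nu_L}$ remains strong Rayleigh (via the stirring representation), so the framework of Subsection \ref{neg_dep_subsect} reduces the problem to showing $N_t \Rightarrow \mbox{Poisson}(\lambda_x)$, where $\lambda_x = \sigma\bar\rho(1-e^{-c/\sigma})e^{-x}$ in part (a) and $\lambda_x = \bar\rho e^{-x}$ in part (b). By a Chen--Stein style bound for negatively associated Bernoulli sums, this in turn follows from the mean convergence $E_{\nu_L}[N_t] \to \lambda_x$ together with the vanishing of the sum of (non-positive) pairwise covariances $\sum_{j \ne k}|\text{Cov}_{\nu_L}(\eta_t(j), \eta_t(k))|$ taken over $j, k > z$.

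By the stirring representation of Subsection \ref{stirring},
\[
E_{\nu_L}[N_t] = \sum_{i = -L}^{0}\rho_i\, P(\xi_i(t) > z).
\]
For part (a), write this as $E_\nu[N_t] - \sum_{i < -L}\rho_i P(\xi_i(t) > z)$, where $\nu$ is the full periodic profile of Condition \ref{ic}. The first term tends to $\sigma\bar\rho e^{-x}$ by (the periodic extension of) Theorem \ref{ArrMean}. For the tail, apply the Gaussian tail/local limit estimates of the Appendix (made effective by Condition \ref{mgf}) to write $P(\xi_i(t) > z) \sim \bar\Phi((z-i)/(\sigma\sqrt{t}))$, and carry out a Riemann-sum/Mills-ratio computation: with $\beta := z/(\sigma\sqrt{t}) \sim \sqrt{\log t}$ and $L/(\sigma\sqrt{t}) \sim c/(\sigma\beta)$, one has $\beta\cdot L/(\sigma\sqrt{t}) \to c/\sigma$, so $e^{-\alpha^2/2} \sim e^{-\beta^2/2}e^{-c/\sigma}$ with $\alpha = (z+L)/(\sigma\sqrt{t})$, and the tail converges to $\sigma\bar\rho e^{-c/\sigma}e^{-x}$. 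Subtracting yields $\lambda_x = \sigma\bar\rho(1-e^{-c/\sigma})e^{-x}$; the case $c = \infty$ is immediate since the tail vanishes and one recovers the limit of Theorem \ref{main}.

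For part (b), the scalings in \eqref{eq:Lscaling} are chosen precisely so that $\beta = z/(\sigma\sqrt{t}) \sim \sqrt{2\log L}$ and, by Mills-ratio asymptotics, $L\bar\Phi(\beta)\to e^{-x}$. Since $L\sqrt{\log t/t}\to 0$, one has $\beta\, |i|/(\sigma\sqrt{t}) \le \beta L/(\sigma\sqrt{t}) \to 0$, so $P(\xi_i(t) > z)/\bar\Phi(\beta) \to 1$ uniformly over $-L \le i \le 0$; periodicity then gives
\[
E_{\nu_L}[N_t] \sim \bar\Phi(\beta)\sum_{i=-L}^{0}\rho_i \sim \bar\rho L\bar\Phi(\beta) \to \bar\rho e^{-x},
\]
which is $\lambda_x$.

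The main obstacle is verifying the covariance bound. By the self-duality of symmetric exclusion (Subsection \ref{duality}) and the semigroup inequality with independent random walks, $|\text{Cov}_{\nu_L}(\eta_t(j),\eta_t(k))|$ for $j,k>z$ is controlled in terms of two-particle transition probabilities for walks started in $\{-L,\ldots,0\}$. The estimates developed in Section \ref{covcomp} for Theorem \ref{main} apply, but must be re-examined under the modified scalings: in part (a) the window $[-L,0]$ truncates the sum but the scaling is the same; in part (b) the window is short relative to $\sqrt{t}$ while the threshold is smaller, so the local-limit estimates in the Appendix need to be revisited with $L$ (rather than $\sqrt{t/\log t}$) as the relevant length scale. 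Once the covariance sum is shown to be $o(1)$, $N_t \Rightarrow \mbox{Poisson}(\lambda_x)$, and the two Gumbel distribution functions $e^{-\lambda_x}$ follow.
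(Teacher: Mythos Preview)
Your overall scheme is the paper's: reduce to a Poisson limit for $N_t$ via the strong Rayleigh property (Lemma \ref{liggettpoisson}), verify mean convergence, and control the covariance sum through duality and the semigroup comparison of Section \ref{duality}.

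Two points are worth flagging. First, your mean computation in part (a) decomposes $\E_{\nu_L}[N_t]$ as the full-step mean minus the tail $\sum_{i<-L}\rho_i P_i(\xi_t>z)$ and evaluates the tail via Mills-ratio asymptotics; the paper instead reduces to the deterministic $L$-block $\eta_L$ and computes $\E_{\eta_L}[N_t]\sim\sigma\sqrt{t}\big(f(z/(\sigma\sqrt{t}))-f((z+L)/(\sigma\sqrt{t}))\big)$ with $f(u)=E[(X-u)^+]$ (Lemma \ref{Nasym}, Theorem \ref{Lthm}). Both routes are valid; yours is more direct in (a), while the paper's handles (a) and (b) through a single formula.

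Second, two steps you pass over need to be made explicit. Lemma \ref{liggettpoisson} has a separate sum-of-squares hypothesis (ii), handled in the paper as Lemma \ref{SoS}; it is cheap but not automatic from mean convergence alone. More substantively, the duality and integration-by-parts bound (Lemma \ref{CovDual}) are written for \emph{deterministic} initial $\eta$, and the paper does not carry the random $\nu_L$ through the covariance analysis. Instead it first dominates $\sum_{k>z}(\E_{\nu_L}[\eta_t(k)])^2+\sC_t(\nu_L,z)$ by the corresponding quantity for the deterministic block via Lemma \ref{fullstepdom} (using negative correlation of the stirring indicators, Lemma \ref{stirringdep}). In particular, for part (a) this lets one bound by $\sC_t(\eta,z)$ with the \emph{full} step $\eta$ and invoke Theorem \ref{fullcov} unchanged---no $L$-dependent covariance work is needed there. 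Only part (b), under the scaling \eqref{eq:Lscaling}, requires the new estimate (Theorem \ref{Lcovcomp}), where indeed the difference $P_0(\xi_s=k)-P_0(\xi_s=k+L+1)$ must be exploited rather than just $P_0(\xi_s=k)$. Your sketch is compatible with all this, but the reduction step should be stated.
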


\begin{remark} \upshape In the context of Theorem \ref{mainLthm}(a), the result when $c=\infty$ matches that of Theorem \ref{main}. When $0<c<\infty$, the mean of the Gumbel is less than when $c=\infty$, a reflection of the smaller size of the $L$-step profile. However, sending $c \to 0$ does not recover the result of Theorem \ref{mainLthm}(b). We attribute this to the difference in scalings between parts (a) and (b): in the scaling of part (a) with $L = o(t^{1/2}(\log t)^{-1/2})$, we would obtain the trivial limit of $1$. It is also interesting to note that $\sigma$ does not appear in the limit of Theorem \ref{mainLthm}(b) with respect to the more dilute
system when $L = o(t^{1/2}(\log t)^{-1/2})$. 
\end{remark}

\begin{remark}\label{mth} \upshape
We note that our proof techniques may be straightforwardly extended to determine the limiting laws of the order statistics of the processes $\eta_t$ as follows. If, for $m\ge0$, $X_t^{(m)}$ denotes the position of the $m$th right-most particle at time $t$, beginning with $X^{(0)}_t = X_t$, then $X_t^{(m)} \le z$ if and only if $N_t \le m$, where $N_t$ is given in \eqref{eq:N}. In the case of nearest-neighbor interactions, 
a particle cannot make a jump over another, and so this gives the limiting distribution for the position of the tagged particle initially $m$th from the right.
In the proofs of Theorem \ref{main} and Theorem \ref{mainLthm} (a) and (b), we show that $N_t$ converges weakly to a Poisson distribution with means $\sigma\bar\rho e^{-x}$, $\sigma\bar\rho(1 - e^{-c/\sigma})e^{-x}$, and $\bar\rho e^{-x}$, respectively. 

Therefore, in the setting of Theorem \ref{main}, as $t \to \infty$,
\begin{equation}\label{eq:orderstat}
	\P_\nu\left( \frac{X_t^{(m)}}{\sigma b_t} - a_t \le x \right) \to 
	\sum_{k=0}^m \frac{(\sigma\bar\rho)^k e^{-\sigma\bar\rho e^{-x} - kx}}{k!}. 
\end{equation}
 The analog in the setting of Theorem \ref{mainLthm}(a) is 
\begin{equation}\label{eq:Lorderstat1}
	\P_{\nu_L}\left( \frac{X^{(m)}_t}{\sigma b_t} - a_t \le x \right) \to \sum_{k=0}^m \frac{[\sigma\bar\rho(1-e^{-c/\sigma})]^k}{k!} \exp\left(-\sigma\bar\rho(1 - e^{-c/\sigma})e^{-x} - kx\right), 
\end{equation}
and in the setting of Theorem \ref{mainLthm}(b) is 
\begin{equation}\label{eq:Lorderstat2}
	\P_{\nu_L}\left( \frac{X^{(m)}_t}{\sigma b_t} - a_t \le x \right) \to \sum_{k=0}^m \frac{\bar\rho^k e^{-\bar\rho e^{-x} - kx}}{k!}. 
\end{equation}

One may compute from \eqref{eq:orderstat}, \eqref{eq:Lorderstat1}, and \eqref{eq:Lorderstat2} that the means of the limit distributions of the scaled $m$th and $(m+1)$th particle positions differ by $(m+1)^{-1}$ (independent of the values of $\sigma$, $\bar\rho$, or $c$). Then for large $t$, the difference $X_t^{(m)} - X_t^{(m+1)}$ is roughly of order $b_t/(m+1)$, and the gaps between particles eventually diverge. 
 This further aids the intuition as to why the exclusion behavior matches that for independent particles, since a particle in the exclusion system moves as an unconstrained random walk in the absence of others nearby. 
\end{remark}

\begin{remark}\label{robust} \upshape As will be seen in the proofs of Theorems \ref{main} and \ref{mainLthm}, 
our results are robust in that a Gumbel limit can be obtained as long as $\E_\nu[N_t]$ converges. So, the results still hold if, say, a finite number of $\rho_i$ do not satisfy the periodicity condition (as $P(\xi_i(t)>z)$ vanishes in \eqref{eq:firststir} as $t\rightarrow\infty$ for each $i\leq 0$). In theory this means that initial conditions other than the periodic type in Condition \ref{ic} can lead to similar limiting distributions.  However in practice computing $\lim_{t \to\infty}\E_\nu[N_t]$ for arbitrary $\nu$ is difficult. Theorems \ref{main} and \ref{mainLthm} could alternatively be stated with the assumption that $\lim_{t\to\infty}\E_\nu[N_t] = \lambda_x \in (0,\infty)$, and the limiting distributions would then be characterized in terms of this limit $\lambda_x$. In this paper, we focus on general periodic initial conditions where concrete evaluations can be done. 
In particular, these initial profiles admit a comparison with the full-step $\rho_k \equiv 1$ case, for which the limit of $\E_\nu[N_t]$ can be obtained generally (See Theorems \ref{ArrMean} and \ref{Lthm}). 
\end{remark}

Lastly, we remark on the behavior of nearest-neighbor ASEP with drift in the direction of the step. As noted in Subsection \ref{comASEP}, spacings $\zeta_t(x)$, between the particles beginning at $-(x+1)$ and $-x$, in the process $\{\eta_t\}$ correspond to a zero range process $\{\zeta_t\}$ on $\Omega = \big\{\eta\in \N^{\N_+}: \sum_{x\geq 1}\eta(x)<\infty\big\}$ understood with an infinite well of particles at $x = 0$. Here, $\N = \{0, 1, 2, \ldots\}$.  
The generator 
for this process is given by its action on local functions as 
\begin{align*}
	\sL_{\text{\tiny ZR}}f(\zeta) 
	&=  p \left[ f(\zeta^{0,1}) - f(\zeta) \right] 
	+ \sum_{x\ge1}1_{\{\zeta(x) \ge 1\}} \Big\{  p\left[f(\zeta^{x,x+1}) - f(\zeta) \right]  + q\left[ f(\zeta^{x,x-1}) - f(\zeta) \right] \Big\},
\end{align*}
where 
\[
	\zeta^{x,y}(z) = \left\{ \begin{array}{cl} \zeta(x) - 1 &\;\mbox{if}\; z = x, \\
								\zeta(y) + 1 &\;\mbox{if}\; z = y, \\
								\zeta(z) &\;\mbox{if}\; z \ne x,y, \end{array}\right. 
\]
for $x, y \ge 1$ and 
\[
	\zeta^{0,1}(x) = \left\{ \begin{array}{cl} \zeta(x) + 1 &\;\mbox{if}\; x = 1, \\
							         \zeta(x) & \;\mbox{if}\; x > 1, \end{array}\right. 
	\qquad
	\zeta^{1,0}(x) = \left\{ \begin{array}{cl} \zeta(x) - 1 &\;\mbox{if}\; x = 1, \\
							         \zeta(x) & \;\mbox{if}\; x > 1. \end{array}\right. 					      
\]
The full step initial profile corresponds to $\zeta \equiv 0$ in the zero range context.  The displacement of the tagged particle may be expressed as 
\[
	X_t = \sum_{x \ge 1} \zeta_t(x). 
\]
Moreover, the $m$th right-most particle position $X^{(m)}_t$, that is the one starting at $-m$ for $m\geq 0$, satisfies
$$ X^{(m)}_t + m = \sum_{x \geq m+1}\zeta_t(x).$$ 
However, in what follows, to be brief, we concentrate on the behavior of $X_t = X^{(0)}_t$.

Let $\mu$ be the product measure on $\N^{\N_+}$ with Geometric$(1 - (p/q)^x)$ marginals. That is, 
\[
	\mu\left( \eta : \eta(x) = k \right) = \left( p/q\right)^{kx} \left( 1 - \left( p/q\right)^x \right), \qquad k \ge 0, 
\]
for each $x \in \N_+$.  One may calculate by the relation $E_\mu[\sL_{\text{\tiny ZR}}f(\zeta) ]=0$ that $\mu$ is an invariant measure, which is unique by irreducibility of this zero range process on $\Omega$.

\begin{prop}\label{ASEP} If $\zeta(x) = 0$ for all $x \ge 1$ and $\P_{\zeta}$ denotes the probability measure under which $\zeta_0 = \zeta$, then 
\begin{equation}\label{eq:ASEPlimit}
	\P_{\zeta}(X_t \in \cdot) \to \mu \left( \sum_{x\ge1} \zeta(x) \in \cdot \right), \qquad t \to \infty.
\end{equation}
\end{prop}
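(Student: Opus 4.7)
The plan is to work entirely in the zero-range picture set up just before the proposition. Let $\zeta_t$ denote the ZR process starting from $\zeta_0 \equiv 0$, so that $X_t = \sum_{x \ge 1}\zeta_t(x)$, and let $\zeta^*$ be a configuration distributed as $\mu$. Since $\E_\mu[\zeta^*(x)] = (p/q)^x/(1-(p/q)^x)$ is summable in $x$, $\sum_{x \ge 1}\zeta^*(x)$ is almost surely finite and the right-hand side of \eqref{eq:ASEPlimit} is a well-defined probability distribution on $\N$. As noted in the excerpt, $\mu$ is the unique invariant distribution on $\Omega$. My target is to show first that $\zeta_t \Rightarrow \mu$ in finite-dimensional distributions, and then to upgrade this to $X_t \Rightarrow \sum_{x \ge 1}\zeta^*(x)$.

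The main tool is a basic (monotone) coupling. Construct $(\zeta_t, \bar\zeta_t)_{t \ge 0}$ jointly with $\zeta_0 \equiv 0$ and $\bar\zeta_0 \sim \mu$, using a single Poisson clock at rate $p$ triggering source events at site $0$, and, for each site $x \ge 1$, a right-jump clock at rate $p$ together with a left-jump clock at rate $q$; whenever a clock rings, each coordinate attempts the corresponding move and executes it only if the relevant site in that coordinate is non-empty. Because the ZR rate $g(k) = \mathbf{1}_{\{k \ge 1\}}$ is non-decreasing and the source rate at $0$ is constant, a case check on each type of clock ring (source at $0$; bulk right jump; bulk left jump; sink from site $1$) shows that the componentwise ordering $\zeta_t(x) \le \bar\zeta_t(x)$ is preserved at every ring, hence for all $t \ge 0$. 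In particular, $\zeta_t$ is stochastically dominated by $\mu$ uniformly in $t$; applying the same attractiveness to the initial times $0$ and $s$ via the Markov property also yields that $t \mapsto \zeta_t$ is stochastically increasing in the product order. The distributions of $\zeta_t$ therefore converge in finite-dimensional distributions to a limit $\nu_\infty$ that is stochastically dominated by $\mu$ and invariant under the ZR semigroup, and the cited uniqueness of invariant distribution on $\Omega$ forces $\nu_\infty = \mu$.

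To upgrade to convergence of $X_t$, fix $\varepsilon > 0$ and choose $M$ so large that $\P(\sum_{x > M}\zeta^*(x) > \varepsilon) < \varepsilon$. By the coupling and stationarity of $\bar\zeta_t$,
\[
\P\bigl(\textstyle\sum_{x > M}\zeta_t(x) > \varepsilon\bigr) \;\le\; \P\bigl(\textstyle\sum_{x > M}\bar\zeta_t(x) > \varepsilon\bigr) \;=\; \P\bigl(\textstyle\sum_{x > M}\zeta^*(x) > \varepsilon\bigr) \;<\; \varepsilon
\]
uniformly in $t$, whereas the truncated sum $\sum_{x \le M}\zeta_t(x)$ depends on only finitely many coordinates and converges in distribution to $\sum_{x \le M}\zeta^*(x)$ by the finite-dimensional convergence established above. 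A standard truncation argument (split $X_t$ at level $M$, send $t \to \infty$, then $M \to \infty$) then yields $X_t \Rightarrow \sum_{x \ge 1}\zeta^*(x)$, which is \eqref{eq:ASEPlimit}.

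The step I expect to be most delicate is the verification of attractiveness of the basic coupling at the boundary site $0$, where the asymmetric source/sink interacts with the interior ZR rate function in a slightly non-standard way; once that structural coupling is in place, the monotone convergence to $\mu$ and the passage from finite-dimensional to total-sum convergence are essentially routine.
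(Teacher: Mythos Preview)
Your proof is correct and follows essentially the same approach as the paper: both use the basic (attractive) coupling of $\zeta_t$ with a stationary copy $\bar\zeta_t \sim \mu$ to obtain the domination $\zeta_t \le \bar\zeta_t$, and both rely on uniqueness of the invariant measure to conclude $\zeta_t \Rightarrow \mu$. The only difference is in the final upgrade from $\zeta_t \Rightarrow \mu$ to $X_t \Rightarrow \sum_x \zeta^*(x)$: the paper bounds the Wasserstein-1 distance by $\E[Y_t - X_t]$ (with $Y_t = \sum_x \bar\zeta_t(x)$) and shows this vanishes via uniform integrability and dominated convergence, whereas you use a tail-truncation argument; both are routine given the coupling, and your version is arguably slightly more direct since it avoids the separate uniform-integrability step.
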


\begin{proof} As the rate $g(k)=1_{\{k\geq 1\}}$ is increasing in $k$, the zero range system is ``attractive''; see \cite{Andjel}. Hence, we may construct a ``basic coupling'' $(\zeta_t, \xi_t)$ where $\xi_0 \sim \mu$, so that initially $\xi_0(x) \ge \zeta_0(x) = 0$ for all $x \ge 1$ and, for all times $t>0$, $\xi_t(x) \ge \zeta_t(x)$.
Indeed, the coupled process has generator
\begin{align*}
	&\overline{\sL}_{\text{\tiny ZR}} f(\zeta,\xi) 
	= p\left[f(\zeta^{0,1},\xi^{0,1}) - f(\zeta,\xi)\right] \\
	&\qquad\qquad\quad+ \sum_{x\ge1} \bigg\{ 1_{\{\zeta(x)\wedge\xi(x) \ge 1\}}\Big( p\left[f(\zeta^{x,x+1},\xi^{x,x+1}) - f(\zeta,\xi)\right] + q\left[f(\zeta^{x,x-1},\xi^{x,x-1})-f(\zeta,\xi) \right] \Big)\\
	&\qquad\qquad\qquad\qquad\;\;+  1_{\{\zeta(x) \ge 1 > \xi(x)=0\}}\Big( p\left[f(\zeta^{x,x+1},\xi) - f(\zeta,\xi)\right] + q\left[f(\zeta^{x,x-1},\xi)-f(\zeta,\xi) \right] \Big)\\
	&\qquad\qquad\qquad\qquad\;\;+ 1_{\{0=\zeta(x) < 1 \le \xi(x)\}}\Big( p\left[f(\zeta,\xi^{x,x+1}) - f(\zeta,\xi)\right] + q\left[f(\zeta,\xi^{x,x-1})-f(\zeta,\xi) \right] \Big) \bigg\}.
\end{align*}
Now let $Y_t = \sum_{x \ge 1} \xi_t(x)$, so that $X_t \le Y_t$ for all times $t \ge 0$. Since $\xi_t$ begins at stationarity, $\P_{\mu}(Y_t \in \cdot)$ is equal to the right hand side of \eqref{eq:ASEPlimit} at all times $t$. 
Thus, if $f$ is a Lipschitz-$1$ function and $E_\mu$ denotes expectation on $\N^{\N_+}$ with respect to $\mu$, 
\[
	\left| \E_{\zeta} \left[ f(X_t) \right] - E_\mu \left[ f\left( \sum_{x\ge1} \zeta(x) \right) \right] \right| 	\le \E_{\zeta,\mu} \left[ Y_t - X_t \right]. 
\]
To establish the desired weak convergence, by a Portmanteau theorem it suffices to show that $\E_{\zeta,\mu} \left[ Y_t - X_t \right] \to 0$ as $t \to \infty$. Since $\zeta_t \Rightarrow \mu$, we have $\zeta_t(x) \Rightarrow \mu(\zeta(x) \in \cdot)$ for each $x \ge 1$. 
Moreover, as
\[
	\sup_{t\ge0} \E_{\zeta}\left[ \zeta_t(x) 1_{\{\zeta_t(x) > M \}} \right] \le \sup_{t\ge0} \E_{\mu}\left[ \xi_t(x) 1_{\{\xi_t(x) > M \}} \right]  = E_\mu \left[ \zeta(x)1_{\{\zeta(x) > M\}} \right] \to 0
\]
as $M \to \infty$, the collection $\{\zeta_t(x) : t \ge0\}$ is uniformly integrable, and
$\E_{\zeta} \left[ \zeta_t(x) \right] \to E_\mu \left[ \zeta(x) \right]$
for each $x$. It follows that 
$\E_{\zeta,\mu} \left[ \xi_t(x) - \zeta_t(x) \right] \to E_\mu \left[ \zeta(x) \right]  - E_\mu \left[ \zeta(x) \right] = 0$
as $t \to \infty$. Since $0\leq \xi_t - \zeta_t \le \xi_t$ for each $t$ and $\E_{\mu} \left[ \sum_{x\ge1} \xi_t(x) \right] = E_\mu \left[ \sum_{x\ge1} \zeta(x)\right] < \infty$, it follows by the dominated convergence theorem that 
\[	
	\E_{\zeta,\mu} \left[ Y_t - X_t \right] = \sum_{x\ge1} \E_{\zeta,\mu} \left[\xi_t(x) - \zeta_t(x)\right] \to 0 \qquad \mbox{as}\qquad t\to\infty.\qedhere
\] 
\end{proof}

\section{Properties of the symmetric exclusion process}\label{propsofex}

In the following sections, we collect some properties of the symmetric exclusion process that motivate our main results and which will be useful in their proofs. 

\subsection{The stirring process}\label{stirring}

The symmetric exclusion system starting from a step profile can be expressed as a system of random stirrings 
\[
	\left\{ \xi_i(t) : i \in \Z_-, t \ge 0 \right\}, 
\]
where at most one of the $\{\xi_i(t)\}$ can occupy a given site at one time, and each pair of particles at $x < y$ are interchanged after an exponential amount of time with rate $p_{y-x}$, with each pair of locations having an independent clock. Equivalently, for each $i$, $\{\xi_i(t) : t\ge0\}$ is a random walk on $\Z$ based on $\{p_j\}$ starting at $i$, and for each $t$, $\{\xi_i(t) : i\in \Z_-\}$ is a random permutation of $\Z$ where $\{\xi_i(0)\}$ is the identity permutation and which appends the transpositions $(k, k+j)$ or $(k, k-j)$ after an exponential amount of time with rate $p_j$ independently. The stirring representation provides one way of constructing the process $\eta_t$: Namely, $\eta_t(j) = \sum_{i\in \Z} \eta_0(i)1_{\{\xi_i(t) = j\}}$.
For further details on its construction and basic properties, we refer to \cite[Ch. VIII]{Lig85}.

We can express the random variable $N_t$ in \eqref{eq:N} in terms of the stirring variables and initial configuration $\eta_0$ as 
\begin{equation}\label{eq:stirringN}
	N_t = \sum_{i \le 0} \eta_0(i)1_{\{\xi_i(t) > z\}}, 
\end{equation}
which is useful for computing the limit of its mean. In particular, if $\nu$ is a Bernoulli initial condition as in Condition \ref{ic}, then since each $\xi_i$ is marginally a random walk beginning at $i$, 
\[
	\E_\nu[N_t] = \sum_{i \le 0} \rho_i P_i(\xi_t > z). 
\]

Note, for later use, by translation-invariance and symmetry of the random walk $\xi_t \stackrel{d}{=} - \xi_t$ that
$P_i(\xi_t >w) = P_0(\xi_t> w-i) = P_0(\xi_t< -w +i)$.

\subsection{Negative dependence}
\label{neg_dep_subsect}

The symmetric exclusion process obeys the following correlation inequality due to Andjel \cite{And88}: for $A \subset \Z$ finite, 
\begin{equation}\label{eq:NegCor}
	P(\eta_t \equiv 1 \mbox{ on } A) \le \prod_{x \in A} P(\eta_t(x)=1). 
\end{equation}
That is, the particles tend to spread out more than they would if instead they moved independently of each other. In fact, the values $\{\eta_t(k): k \in \Z\}$ are {\em strong Rayleigh} for each $t > 0$ when $\eta_0$ is distributed according to a product measure (as is the case for the initial step distributions we consider in Condition \ref{ic}). 

That is, for any finite subset $A \subset \Z$, the generating function $Q(x) = \E_{\nu}\left[ \prod_{i\in A} x_i^{\eta_t(i)}\right]$ satisfies 
\[
	\partial_{x_i} Q(x)\partial_{x_j}Q(x) \ge Q(x)\partial^2_{x_i,x_j}Q(x)
\]
for all $i \ne j$ and all $x \in \R^{A}$. (Note that plugging $x_k = 1$ for all $k$ into the above display recovers negative correlation, namely \eqref{eq:NegCor} for $A$ replaced with $\{i,j\}$.) For further details we refer to \cite{BorBraLig09} (see also \cite{Pem00}). 

The strong Rayleigh property implies that for each $t > 0$ and $B \subset \Z$, there exist independent Bernoulli random variables $\{\zeta_{t}(k),  k \in B\}$ such that 
\begin{equation}\label{eq:zetasum}
	\sum_{k\in B} \eta_t(k) \overset{d}{=} \sum_{k \in B} \zeta_{t}(k).
\end{equation}
For the proof of \eqref{eq:zetasum} for finite $B$, see \cite[Proposition 4]{Lig09}, and for an extention to arbitrary $B\subset\Z$, see Proposition 1 and the discussion in Section 4 of \cite{Van10}.
In \cite{Lig09, Van10}, given \eqref{eq:zetasum}, central limit theorem for sums in the exclusion system were stated, and the following Poisson limit theorem was shown (see Proposition 5 in \cite{Lig09}).
 For further details of the properties above, we refer the reader to the discussions in those works. 

\begin{lemma}[Liggett, Vandenberg-Rodes]\label{liggettpoisson} If, as $t\to\infty$, 
\begin{enumerate}[(i)]
\item $\sum_{k} \E_\nu\left[\eta_t(k)\right] \to \lambda$, 
\item $\sum_{k} \left(\E_\nu[\eta_t(k)]\right)^2 \to 0$, and 
\item $\sum_{j\ne k} \Cov_\nu(\eta_t(j), \eta_t(k)) \to 0$, 
\end{enumerate}
then, 
\[
	\sum_k \eta_t(k) \Rightarrow \mbox{Poisson}\,(\lambda). 
\]
\end{lemma}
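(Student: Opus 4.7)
The plan is to use the strong Rayleigh representation \eqref{eq:zetasum} to reduce the claim to a standard Poisson limit theorem for sums of \emph{independent} Bernoulli random variables, for which the classical Le Cam-type criterion applies: if $\{\zeta_t(k)\}$ are independent Bernoullis with parameters $q_{t,k}$, then $\sum_k \zeta_t(k) \Rightarrow \mbox{Poisson}(\lambda)$ provided
\begin{equation}\label{eq:LeCam}
\sum_k q_{t,k} \to \lambda \qquad\mbox{and}\qquad \max_k q_{t,k} \to 0.
\end{equation}
This classical fact is a one-line Laplace-transform computation: $\prod_k(1 + q_{t,k}(e^{-s}-1))$ converges to $\exp(\lambda(e^{-s}-1))$ via $\log(1+x) = x + O(x^2)$, with the error controlled by $\sum_k q_{t,k}^2 \le (\max_k q_{t,k})(\sum_k q_{t,k})$.

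Applying \eqref{eq:zetasum} with $B = \Z$ (the infinite-set extension in \cite{Van10}) produces, for each $t$, independent Bernoullis $\{\zeta_t(k)\}_{k\in\Z}$ with parameters $q_{t,k} := \E_\nu[\zeta_t(k)]$ such that $\sum_k \eta_t(k) \stackrel{d}{=} \sum_k \zeta_t(k)$. Matching first moments gives immediately
\[
\sum_k q_{t,k} \;=\; \E_\nu\!\left[\sum_k \zeta_t(k)\right] \;=\; \E_\nu\!\left[\sum_k \eta_t(k)\right] \;=\; \sum_k \E_\nu[\eta_t(k)] \;\longrightarrow\; \lambda
\]
by hypothesis (i), verifying the first condition in \eqref{eq:LeCam}.

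For the second condition, I would match second moments. By independence, $\Var_\nu(\sum_k \zeta_t(k)) = \sum_k q_{t,k} - \sum_k q_{t,k}^2$, while the dependent sum satisfies
\[
\Var_\nu\!\left(\sum_k \eta_t(k)\right) = \sum_k \E_\nu[\eta_t(k)] - \sum_k (\E_\nu[\eta_t(k)])^2 + \sum_{j\ne k}\Cov_\nu(\eta_t(j), \eta_t(k)).
\]
Equating these and using the match of means yields the key identity
\[
\sum_k q_{t,k}^2 \;=\; \sum_k (\E_\nu[\eta_t(k)])^2 \;-\; \sum_{j\ne k}\Cov_\nu(\eta_t(j), \eta_t(k)).
\]
Both terms on the right are non-negative---the second by Andjel's inequality \eqref{eq:NegCor}---and tend to $0$ by (ii) and (iii). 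Hence $\sum_k q_{t,k}^2 \to 0$, and since $\max_k q_{t,k} \le (\sum_k q_{t,k}^2)^{1/2}$, also $\max_k q_{t,k} \to 0$, completing the verification of \eqref{eq:LeCam}. There is no genuine obstacle here: the substantive input---the strong Rayleigh property encoded in \eqref{eq:zetasum}---is already in hand, and what remains is a short algebraic identity combined with the classical Poisson limit theorem for independent Bernoulli triangular arrays.
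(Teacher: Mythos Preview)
Your argument is correct and is essentially the proof given in the cited reference \cite{Lig09}; the paper itself does not prove this lemma but quotes it as Proposition~5 of Liggett. The reduction via \eqref{eq:zetasum} to independent Bernoullis, the first/second moment matching to extract $\sum_k q_{t,k}^2 \to 0$, and the classical Poisson limit for triangular arrays of independent Bernoullis are exactly the steps used there.
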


We will use this result to show a Poisson limit for the process $N_t$, which will follow from verifying (i)--(iii). We note that the negative correlation property \eqref{eq:NegCor} implies that $\Cov_\nu(\eta_t(j), \eta_t(k)) \le 0$ for all $j \ne k$. Hence we verify (iii) by finding an upper bound for 
the negative sum of covariances. For convenience, we introduce the notation 
\begin{equation}\label{eq:covsum}
	\sC_t(\nu,z) = - \underset{j \ne k}{\sum_{j,k > z}} \Cov_\nu(\eta_t(j), \eta_t(k)). 
\end{equation}
The stirring variables from the representation presented in Section \ref{stirring} have their own negative dependence property, as shown in Lemma 1' of \cite{Arr83}:

\begin{lemma}[Arratia]\label{stirringdep} For symmetric $\{p_i\}$, $A \subset \Z$, and $i \ne j$, the events $\{\xi_i(t) \in A\}$ and $\{\xi_j(t) \in A\}$ are negatively correlated. 
\end{lemma}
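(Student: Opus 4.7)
The plan is to compare the stirring pair $(\xi_i(t),\xi_j(t))$ with two independent continuous-time random walks $(W_i(t),W_j(t))$ having jump rates $\{p_k\}$ and the same starting positions $(i,j)$. Since each $\xi_i(t)$ is marginally distributed as $W_i(t)$, by independence $P(W_i(t)\in A)P(W_j(t)\in A)=P(W_i(t)\in A,W_j(t)\in A)$, so it suffices to prove the joint inequality
\[
P(\xi_i(t)\in A,\xi_j(t)\in A)\le P(W_i(t)\in A,W_j(t)\in A),
\]
which expresses that the exclusion interaction in the stirring makes the two labels ``more spread out'' than two independent walks.

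The main step is a generator comparison. The joint processes $(\xi_i,\xi_j)$ and $(W_i,W_j)$ agree on all transitions moving exactly one coordinate to a site distinct from the other; they differ only on the ``diagonal-hitting'' transitions. Whenever the state is $(x,y)$ with $x\ne y$, the independent walks admit $(x,y)\to(y,y)$ and $(x,y)\to(x,x)$ at rates $p_{y-x}$ and $p_{x-y}$ respectively, while the stirring replaces these by the swap $(x,y)\to(y,x)$ at rate $p_{y-x}$. Writing $\sL_I$ and $\sL_S$ for the respective generators, and using the symmetry $p_{x-y}=p_{y-x}$ together with the invariance $f(y,x)=f(x,y)$ of $f(x,y)=\mathbf{1}_A(x)\mathbf{1}_A(y)$, a direct computation gives
\[
(\sL_I-\sL_S)f(x,y)=p_{y-x}\Big[\mathbf{1}_A(x)(1-\mathbf{1}_A(y))+\mathbf{1}_A(y)(1-\mathbf{1}_A(x))\Big]\ge 0.
\]
The same algebra applied to $v(s,x,y):=P_x(\xi_s\in A)\,P_y(\xi_s\in A)$ yields $(\sL_I-\sL_S)v(s,x,y)=p_{y-x}\big[P_x(\xi_s\in A)-P_y(\xi_s\in A)\big]^2\ge 0$.

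To convert this pointwise sign into the desired inequality, let $u(t,x,y)=\E_{(x,y)}[\mathbf{1}_A(\xi_i(t))\mathbf{1}_A(\xi_j(t))]$ and $v(t,x,y)=P_x(\xi_t\in A)P_y(\xi_t\in A)$ solve $\partial_t u=\sL_S u$ and $\partial_t v=\sL_I v$ respectively with common initial datum $f$. The difference $w=v-u$ then satisfies $w(0,\cdot)=0$ and $\partial_t w=\sL_S w+(\sL_I-\sL_S)v$, so Duhamel's formula gives
\[
(v-u)(t,x,y)=\int_0^t e^{(t-s)\sL_S}\big[(\sL_I-\sL_S)v(s,\cdot,\cdot)\big](x,y)\,ds\ge 0,
\]
because the stirring semigroup is positivity-preserving and the integrand is pointwise non-negative by the previous step. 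Evaluating at $(x,y)=(i,j)$ produces the joint bound, hence the negative correlation.

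The main obstacle I anticipate is the rigorous justification of Duhamel's identity on the infinite state space $\Z^2$, i.e.\ ensuring that $v(s,\cdot,\cdot)$ lies in a domain on which both $\sL_S$ and $\sL_I$ act meaningfully; this should be handled by approximating the stirring with finite-volume versions and passing to the limit via the Harris graphical construction. Conceptually the symmetry $p_i=p_{-i}$ is essential: it is precisely what makes the swap rate match the sum of the two collision rates and causes the $f(y,x)=f(x,y)$ cancellation, so that the generator difference remains non-negative.
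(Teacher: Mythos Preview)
The paper does not give its own proof of this lemma; it simply attributes the result to Arratia \cite{Arr83}. Your argument is correct, and in fact it is precisely the integration-by-parts/duality machinery that the paper itself develops in Section~\ref{duality}. Since $f(x,y)=\mathbf{1}_A(x)\mathbf{1}_A(y)$ is symmetric, the swap term in your stirring generator $\sL_S$ vanishes on it, so your $\sL_S$, $\sL_I$ coincide with the paper's $\sV$, $\sU$ on the relevant functions, and your Duhamel identity is exactly \eqref{eq:IntByParts} applied to $f$. Your key computation $(\sL_I-\sL_S)v(s,x,y)=p_{y-x}\big[P_x(\xi_s\in A)-P_y(\xi_s\in A)\big]^2\ge 0$ is the same calculation that appears verbatim in the proof of Lemma~\ref{CovDual}.

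Regarding the technical concern you flag about justifying Duhamel on the infinite state space: the paper sidesteps this by invoking \cite[Ch.~VIII, Prop.~1.7]{Lig85}, where both \eqref{eq:VUcomp} and \eqref{eq:IntByParts} are established rigorously for bounded symmetric positive-definite functions, a class that includes $f(x,y)=\mathbf{1}_A(x)\mathbf{1}_A(y)$. So no finite-volume approximation is needed here; you can simply cite Liggett. Arratia's original argument is somewhat different in flavor (a more direct combinatorial treatment of the stirring construction), but your semigroup proof is cleaner and dovetails perfectly with the tools already assembled in this paper.
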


The previous lemma allows for the following result, which is used in the proofs of Theorems \ref{main} and \ref{mainLthm} and may be skipped on first reading. 

\begin{lemma}\label{fullstepdom} Let $\eta \in \{0,1\}^\Z$ be defined by $\eta(k) = 1$ for $k \le 0$ and $\eta(k)=0$ otherwise. For any Bernoulli product measure $\nu$ on $\{0,1\}^\Z$ with $\nu(\eta(k) = 1) = 0$ for $x > 0$, and for any $z \in \R$, 
\begin{equation}\label{eq:onetoprove}
	\sum_{k > z} \left(\E_\nu[\eta_t(k)]\right)^2 + \sC_t(\nu,z) \le \sum_{k > z} \left(\E_{\eta}[\eta_t(k)]\right)^2 + \sC_t(\eta,z). 
\end{equation}
For each $L > 0$, define $\eta_L \in \{0,1\}^\Z$ by $\eta_L(k) = 1$ for $-L \le k \le 0$ and $\eta_L(k) = 0$ otherwise. Then for $\nu_L$ as in Theorem \ref{mainLthm}, 
\[
	\sum_{k > z} \left(\E_{\nu_L}[\eta_t(k)]\right)^2 + \sC_t(\nu_L,z) \le \sum_{k > z} \left(\E_{\eta_L}[\eta_t(k)]\right)^2 + \sC_t(\eta_L,z). 
\]
\end{lemma}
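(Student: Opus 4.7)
The strategy is to rewrite the left side of \eqref{eq:onetoprove} as a monotone function of the initial densities $\rho_i$, so that the inequality becomes immediate upon comparison with the pointwise-larger densities of the full-step profile. The only substantive input beyond the stirring representation \eqref{eq:stirringN} is the negative dependence Lemma \ref{stirringdep}.

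First I would establish the algebraic identity
\[
\sum_{k>z}\bigl(\E_\nu[\eta_t(k)]\bigr)^2 + \sC_t(\nu,z) \;=\; \E_\nu[N_t]^2 - \E_\nu\!\left[\sum_{\substack{j,k>z \\ j\ne k}}\eta_t(j)\eta_t(k)\right].
\]
This follows by adding and subtracting $\sum_{j\ne k,\,j,k>z}\E_\nu[\eta_t(j)]\E_\nu[\eta_t(k)]$: the positive pieces combine to form $\bigl(\sum_{k>z}\E_\nu[\eta_t(k)]\bigr)^2 = \E_\nu[N_t]^2$, while the negative pieces combine with $\sC_t(\nu,z)$ to give $-\E_\nu\bigl[\sum_{j\ne k}\eta_t(j)\eta_t(k)\bigr]$. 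Using $\eta_t(k)^2 = \eta_t(k)$, the inner double sum equals $N_t^2 - N_t$, so the whole expression collapses to $\E_\nu[N_t] - \Var_\nu(N_t)$, and likewise with $\eta$ in place of $\nu$.

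Next I would expand these quantities through the stirring representation. Because the initial occupancies are independent of the stirring clocks and $\nu$ is a Bernoulli product measure, one has $\E_\nu[\eta_0(i)\eta_0(i')] = \rho_i\rho_{i'}$ when $i\ne i'$, while $\eta_0(i)^2 = \eta_0(i)$ on the diagonal. After separating terms this yields
\begin{align*}
\E_\nu[N_t] - \Var_\nu(N_t) \;=\;& \sum_i \rho_i^2\, P(\xi_i(t) > z)^2 \\
&{}+ \sum_{i\ne i'}\rho_i\rho_{i'}\Bigl[P(\xi_i(t)>z)P(\xi_{i'}(t)>z) - P(\xi_i(t)>z,\,\xi_{i'}(t)>z)\Bigr].
\end{align*}
Applying Lemma \ref{stirringdep} to $A = \Z\cap(z,\infty)$ shows that each bracketed factor in the second sum is non-negative; the diagonal sum is manifestly non-negative. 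Hence the right side is a non-decreasing function of each $\rho_i \in [0,1]$.

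Since the hypothesis on $\nu$ gives $\rho_i \le \mathbf{1}_{\{i\le 0\}}$, which are precisely the densities defining $\eta$, the first claimed inequality follows immediately from the monotonicity. The $L$-step statement is identical: under $\nu_L$ one has $\rho_i\mathbf{1}_{\{-L\le i\le 0\}} \le \mathbf{1}_{\{-L\le i\le 0\}}$, the densities of $\eta_L$, so the same monotonicity argument applies. I do not anticipate a substantive obstacle here; the only effort is the algebraic bookkeeping in deriving the displayed identity, and Lemma \ref{stirringdep} does the heavy lifting.
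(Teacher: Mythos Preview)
Your proposal is correct and follows essentially the same argument as the paper: both reduce the inequality to the identity $\sum_{k>z}(\E_\mu[\eta_t(k)])^2 + \sC_t(\mu,z) = \E_\mu[N_t] - \Var_\mu(N_t)$, expand the right side via stirring as $\sum_i \rho_i^2 (E[I_i])^2 - \sum_{i\ne j}\rho_i\rho_j\Cov(I_i,I_j)$ with $I_i = 1_{\{\xi_i(t)>z\}}$, and then invoke Lemma~\ref{stirringdep} to conclude that the expression is monotone in the $\rho_i$. The only difference is presentational: you phrase the final step as monotonicity in each $\rho_i$, whereas the paper simply bounds $\rho_i^2\le 1$ and $\rho_i\rho_j\le 1$ termwise.
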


\begin{proof} We prove \eqref{eq:onetoprove}; the proof of the second inequality is the same. Recall that for each $k \le 0$, $\rho_k = \nu(\eta(k) = 1)$. Let $I_i = 1_{\{\xi_i(t) > z\}}$ for each $i \le 0$. 
From the representation \eqref{eq:stirringN}, we calculate
\begin{align*}
	\E_{\eta}[N_t] - \Var_{\eta}(N_t) &= \sum_{i\le 0} (E[I_i])^2 - \sum_{i\ne j} \Cov(I_i,I_j), \ \ {\rm and}\\
	\E_\nu[N_t] - \Var_\nu(N_t) &= \sum_{i\le0}\rho_i^2(E[I_i])^2 - \sum_{i\ne j}\rho_i\rho_j\Cov(I_i,I_j). 
\end{align*}
By Lemma \ref{stirringdep}, $I_i$ and $I_j$ are negatively correlated for $i\ne j$. Hence, $-\Cov(I_i,I_j) > 0$ and then 
\[
	0 \le \E_\nu[N_t] - \Var_\nu(N_t) \le \E_{\eta}[N_t] - \Var_{\eta}(N_t). 
\]
On the other hand, we may use the representation $N_t = \sum_{k>z}\eta_t(k)$ to compute 
\[
	\E_\mu[N_t] - \Var_\mu(N_t) = \sum_{k>z} (\E_\mu[\eta_t(k)])^2 + \sC_t(\mu,z), 
\]
for $\mu = \nu, \delta_\eta$. The result follows. 
\end{proof}

\subsection{Duality}\label{duality}
 
As we will see in Lemma \ref{fullstepdom}, we are able to limit much of our analysis to deterministic initial conditions. Let $\eta \in \{0,1\}^\Z$. 
As mentioned in the preceeding discussion, negative dependence in the symmetric exclusion process implies that for all $j \ne k$, 
\begin{equation}\label{eq:NegCov}
	-\Cov_\eta(\eta_t(j), \eta_t(k)) = \E_\eta[\eta_t(j)] \E_\eta[\eta_t(k)] - \E_\eta[\eta_t(j)\eta_t(k)] \ge 0. 
\end{equation}
This covariance can be computed using the self-duality of the symmetric exclusion process, which can be expressed as follows. If $(\zeta_1(t),\ldots, \zeta_n(t))$ denotes the particle positions at time $t$ of an $n$-particle system based on $\{p_i\}$ with exclusion interaction, then 
\[
	\E_\eta[\eta_t(x_1) \cdots \eta_t(x_n)] = E_{(x_1, \ldots, x_n)}\left[\eta(\zeta_1(t))\cdots \eta(\zeta_n(t))\right], 
\]
where $\{x_1, \ldots, x_n\} \subset \Z$ and $E_{(x_1, \ldots, x_n)}$ denotes expectation with respect to which $\zeta_i(0)=x_i$ \cite[Ch. VIII, Thm. 1.1]{Lig85}. In particular, noting \eqref{eq:NegCov}, we may consider $n = 2$ and write, for $j\neq k$,
\[
	-\Cov_\eta(\eta_t(j), \eta_t(k)) = \left[U_2(t) - V_2(t)\right]\eta(j)\eta(k), 
\]
where $\{V_2(t), t\ge0\}$ denotes the semigroup of the process $(\zeta_1(t), \zeta_2(t))$ and $\{U_2(t), t\geq 0\}$ is the semigroup of a pair of independent random walks with the common law of $\{\xi_t, t\geq 0\}$.  Here, $h(j,k)=\eta(j)\eta(k)$ refers to the map $(j,k)\mapsto \eta(j)\eta(k)$ in terms of the specified configuration $\eta$, so that 
$$U_2(t)\eta(j)\eta(k) =E_j[\eta(\xi_t)] E_k[\eta(\xi_t)]=\E_\eta[\eta_t(j)] \E_\eta[\eta_t(k)].$$
When $j=k$, we will write $h(j,j) = \eta(j)\eta(j)$ and will understand $U_2(t)h(j,j) = \left(E_j[\eta(\xi_t)]\right)^2$ in the following.

Note that $V_2$ is a symmetric operator on the space of real functions on $\{(j,k)\in \Z^2: j\neq k\}$.  

 The semigroups $V_2$ and $U_2$ have corresponding generators:
\begin{equation}\label{eq:ExPairgen}
\begin{aligned}
	\sV f(x,y) &= \sum_{z \ne y} p_{z-x}[f(z,y)-f(x,y)] + \sum_{z \ne x} p_{z-y}[f(x,z)-f(x,y)], 
\end{aligned}
\end{equation}
and 
\begin{equation}\label{eq:RWgen}
\begin{aligned}
	\sU f(x,y) &= \sum_{z \in \Z} \left( p_{z-x}[f(z,y)-f(x,y)] + p_{z-y}[f(x,z)-f(x,y)]\right).
\end{aligned}
\end{equation}

Comparing the independent and exclusion two-particle systems is useful for further computation. In fact, 
\begin{equation}\label{eq:VUcomp}
	V_2(t)g(j,k) \le U_2(t)g(j,k)
\end{equation}
holds for all bounded, symmetric, positive definite functions $g$ \cite[Ch. VIII, Prop. 1.7]{Lig85}, from which \eqref{eq:NegCov} can again be derived. Here, $g$ is positive definite if $\sum_{j,k\in \Z} g(j,k)\beta(j)\beta(k) \geq 0$ when $\sum_{k\in \Z} |\beta(k)|<\infty$ and $\sum_{k\in \Z} \beta(k) = 0$.  In particular, $g(j,k) = 1_{\{j>z, k>z\}}$ is such a function since $\sum_{j,k\in \Z} g(j,k)\beta(j)\beta(k) = \left(\sum_{k\geq z}\beta(k)\right)^2\geq 0$.

 Furthermore, we have the integration-by-parts formula 
\begin{equation}\label{eq:IntByParts}
	U_2(t) - V_2(t) = \int_0^t V_2(t-s)[\sU-\sV]U_s(s)\,ds. 
\end{equation}
For further details, we refer to \cite[Ch. VIII]{Lig85}. 
We use both \eqref{eq:VUcomp} and \eqref{eq:IntByParts} along with duality to obtain the following bound. 

\begin{lemma}\label{CovDual} For any deterministic initial condition $\eta \in \{0,1\}^\Z$, 
\[
	\sC_t(\eta,z) \le 2\sum_{i\ge1}p_i \sum_{k\in \Z} \int_0^t \left( E_k[\eta(\xi_s)] - E_{k+i}[\eta(\xi_s)] \right)^2 P_{k+i}(\xi_{t-s} > z)^2\,ds. 
\]
\end{lemma}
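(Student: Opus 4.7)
The plan is to express $\sC_t(\eta,z)$ via the two-particle self-duality, pass to an integrated form through \eqref{eq:IntByParts}, and then use the symmetry of $V_2$ to move the semigroup onto a target function to which the comparison \eqref{eq:VUcomp} applies. With $h(j,k)=\eta(j)\eta(k)$, the duality identity used just above the statement gives, for each $j\ne k$, $-\Cov_\eta(\eta_t(j),\eta_t(k))=[U_2(t)-V_2(t)]h(j,k)$, so
\[
\sC_t(\eta,z)=\sum_{\substack{j,k>z\\ j\ne k}}[U_2(t)-V_2(t)]h(j,k).
\]
Plugging in \eqref{eq:IntByParts} and interchanging sum and integral (justified by Tonelli, since each summand is non-negative by negative correlation) yields
\[
\sC_t(\eta,z)=\int_0^t\!\!\sum_{\substack{j,k>z\\ j\ne k}}V_2(t-s)\phi_s(j,k)\,ds,\qquad \phi_s:=[\sU-\sV]U_2(s)h.
\]
A direct computation from \eqref{eq:ExPairgen} and \eqref{eq:RWgen} gives, for $x\ne y$, $[\sU-\sV]f(x,y)=p_{y-x}(f(x,x)+f(y,y)-2f(x,y))$. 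Setting $g_s(x):=E_x[\eta(\xi_s)]$, so that $U_2(s)h(x,y)=g_s(x)g_s(y)$ and $U_2(s)h(x,x)=g_s(x)^2$ by the convention from the excerpt, this reduces to
\[
\phi_s(x,y)=p_{y-x}\bigl(g_s(x)-g_s(y)\bigr)^2.
\]

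The step I expect to be most delicate is the invocation of \eqref{eq:VUcomp}, because $\phi_s$ is not positive definite in general and so cannot play the role of $g$ there directly. The workaround is that $V_2$ is symmetric on functions on $\{(j,k)\in\Z^2:j\ne k\}$, as noted in the excerpt, a consequence of $p_i=p_{-i}$. Moving the semigroup across gives
\[
\sum_{\substack{j,k>z\\ j\ne k}}V_2(t-s)\phi_s(j,k)=\sum_{j\ne k}\phi_s(j,k)\,V_2(t-s)\bigl[\mathbf{1}_{\{\cdot>z,\,\cdot>z\}}\bigr](j,k).
\]
The target $\mathbf{1}_{\{j>z,\,k>z\}}=\mathbf{1}_{\{j>z\}}\mathbf{1}_{\{k>z\}}$ is symmetric and positive definite, as already observed in the excerpt, so \eqref{eq:VUcomp} applies and produces
\[
V_2(t-s)\bigl[\mathbf{1}_{\{\cdot>z,\,\cdot>z\}}\bigr](j,k)\le U_2(t-s)\bigl[\mathbf{1}_{\{\cdot>z,\,\cdot>z\}}\bigr](j,k)=P_j(\xi_{t-s}>z)\,P_k(\xi_{t-s}>z).
\]

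To finish, I would reindex by $i=k-j$, use $p_i=p_{-i}$ to restrict to $i\ge 1$ at the cost of a factor of $2$, and bound $P_j(\xi_{t-s}>z)\,P_{j+i}(\xi_{t-s}>z)\le P_{j+i}(\xi_{t-s}>z)^2$ for $i\ge 1$, valid because $P_k(\xi_{t-s}>z)=P_0(\xi_{t-s}>z-k)$ is nondecreasing in $k$ by translation invariance and symmetry of the walk. Relabeling the summation index $j$ as $k$ produces exactly the claimed bound. The remainder is elementary; the only conceptually nontrivial move is the reversibility swap above, which is what allows \eqref{eq:VUcomp} to be brought to bear despite $\phi_s$ itself not being positive definite.
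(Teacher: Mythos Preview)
Your proof is correct and follows essentially the same approach as the paper: express $\sC_t(\eta,z)$ via duality and the integration-by-parts formula \eqref{eq:IntByParts}, compute $[\sU-\sV]U_2(s)h$ explicitly, swap $V_2(t-s)$ onto the indicator $\mathbf{1}_{\{j>z,k>z\}}$ by symmetry of $V_2$, apply the comparison \eqref{eq:VUcomp} to that positive definite target, and finish by reindexing and using the monotonicity $P_j(\xi_{t-s}>z)\le P_{j+i}(\xi_{t-s}>z)$ for $i\ge 1$. The only cosmetic difference is that you justify the sum–integral interchange via nonnegativity of $V_2(t-s)\phi_s$ (which is valid since $\phi_s\ge 0$ and $V_2$ preserves positivity), whereas the paper proceeds formally.
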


\begin{proof}
Write
\begin{align}
\label{eq:pickupL}
	\sC_t(\eta, z) 	= \underset{j\ne k} {\sum_{j,k > z}}\left[ U_2(t) - V_2(t) \right] \eta(j)\eta(k) 
	= \underset{j\ne k} {\sum_{j,k > z}} \int_0^t V_2(t-s)\left[\sU-\sV\right] U_2(s) \eta(j)\eta(k)\,ds. 
\end{align}
Using \eqref{eq:ExPairgen} and \eqref{eq:RWgen}, we compute for $j \neq k$, using the symmetry $p_{j-k}=p_{k-j}$, that 
\begin{align*}
	\left[\sU-\sV\right]U_2(t)\eta(j)\eta(k) &= p_{k-j}U_2(s)\left[\eta(j)\eta(j)+\eta(k)\eta(k)-2\eta(j)\eta(k)\right] \\
	&=p_{k-j}\left(E_j[\eta(\xi_s)] - E_k[\eta(\xi_s)]\right)^2. 
\end{align*}
Then,
we have
\begin{align*}
	&\underset{k\ne j} {\sum_{j,k > z}} \int_0^t V_2(t-s)\left[\sU-\sV\right] U_2(s) \eta(j)\eta(k)\,ds\\
	&= \int_0^t  \sum_{k \neq j} 1_{\{j>z,k>z\}}V_2(t-s)p_{k-j}\left(E_j[\eta(\xi_s)] - E_k[\eta(\xi_s)]\right)^2\,ds \\
	&= \int_0^t \sum_{k\neq j}p_{k-j}\left(E_j[\eta(\xi_s)] - E_k[\eta(\xi_s)]\right)^2V_2(t-s)1_{\{j>z,k>z\}}\,ds \\
	&\le \int_0^t \sum_{k\neq j}p_{k-j}\left(E_j[\eta(\xi_s)] - E_k[\eta(\xi_s)]\right)^2U_2(t-s)1_{\{j>z,k>z\}}\,ds\\
	&=\int_0^t \sum_{k\neq j}p_{k-j}\left(E_j[\eta(\xi_s)] - E_k[\eta(\xi_s)]\right)^2P_{j}(\xi_{t-s}>z)P_{k}(\xi_{t-s}>z)\,ds\\
	&\le 2\sum_{i\ge1}p_i\sum_{k\in\Z}\int_0^t \left(E_k[\eta(\xi_s)] - E_{k+i}[\eta(\xi_s)]\right)^2P_{k+i}(\xi_{t-s}> z)^2\,ds. 
\end{align*}
Here, we used that $V_2$ is a symmetric operator in the second equality, and $(j,k)\mapsto 1_{\{j>z,k>z\}}$ is positive definite and \eqref{eq:VUcomp} for the next inequality.  For the last inequality, we split over $j>k$ and $j<k$, noting 
$P_{\ell}(\xi_u>z) =P_0(\xi_u> z-\ell)\leq P_0(\xi_u> z-m)=P_{m}(\xi_u>z)$ when $\ell \leq m$ and the symmetry $p_{k-j}=p_{j-k}$.
\end{proof}

\section{Proofs of main theorems}\label{proofs}

As we note in Subsection \ref{prooftech}, we prove 
Theorems \ref{main} and \ref{mainLthm} by showing that $N_t$ as defined in \eqref{eq:N} converges to the required Poisson distribution. 
This requires verification of conditions (i)--(iii) in Lemma \ref{liggettpoisson} for the appropriate initial condition. 
This is done in four steps. 
Let $\nu$ be as in Theorem \ref{main} and $\nu_L$ be as in Theorem \ref{mainLthm}. Also let $a_t$ and $b_t$ denote the sequences in \eqref{eq:ArrScaling}, and let $a_t^{(L)}$ and $b_t^{(L)}$ denote the sequences in \eqref{eq:Lscaling}. Fix $x \in \R$. 

{\bf Step 1.}
For (i) in Lemma \ref{liggettpoisson}, we have that when $z = \sigma b_t(x+a_t)$, 
\begin{equation}\label{eq:fullmean}
	\E_\nu[N_t] \to \sigma\bar\rho e^{-x}, \qquad t\to\infty. 
\end{equation}
Furthermore, 
\begin{equation}\label{eq:fastLmean}
	\E_{\nu_L}[N_t] \to \sigma\bar\rho(1-e^{-c/\sigma})e^{-x}, \qquad t\to\infty, 
\end{equation}
when $Lt^{-1/2}(\log t)^{1/2} \to c \in (0,\infty]$. 
When $z = \sigma b_t^{(L)}(x + a_t^{(L)})$, we have
\begin{equation}\label{eq:slowLmean}
	\E_{\nu_L}[N_t] \to \bar\rho e^{-x}, \qquad t\to\infty, 
\end{equation}
provided $L \to \infty$ and $L = o(t^{1/2}(\log t)^{-1/2})$. 
Proofs of these mean convergence results are done in Section \ref{meancomp} by comparing to the case when $\rho_k = 1$ for all $k \le 0$. In particular, \eqref{eq:fullmean} is shown in Theorem \ref{ArrMean}, and \eqref{eq:fastLmean} and \eqref{eq:slowLmean} are shown in Theorem \ref{Lthm}.

{\bf Step 2.} 
Next, we have the following result, which verifies (ii) in  Lemma \ref{liggettpoisson} in the context of both Theorems \ref{main} and \ref{mainLthm}. It is proved at the end of this section. 
\begin{lemma}\label{SoS} 
Let $\nu$ be any Bernoulli initial condition with $\nu(\eta(k) = 1) = 0$ for $k > 0$. If $z = \sigma b_t(x+a_t)$ or $z = \sigma b_t^{(L)}(x+a_t^{(L)})$, then 
\[
	\sum_{k > z} \left( \E_\nu\left[ \eta_t(k) \right] \right)^2 = O\left( e^{-z^2/(2\sigma^2t)} \E_\nu[N_t] \right). 
\]
In particular, when the limit of $\E_\nu[N_t]$ exists and is proportional to $e^{-x}$ and $z = \sigma b_t(x+a_t)$, 
the sum of squares term is of order 
\[
	e^{-z^2/(2\sigma^2 t)} = O\left( \frac{e^{-2x}\log t}{\sqrt{t}} \right), 
\]
and when $z = \sigma b_t^{(L)}(x+a_t^{(L)})$, 
it is of order 
\[
	e^{-z^2/(2\sigma^2 t)} = O\left(\frac{e^{-2x}\sqrt{\log L}}{L} \right). 
\]
\end{lemma}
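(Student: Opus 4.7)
The plan is to control the sum of squares by the familiar max-times-sum trick,
\[
\sum_{k > z} \bigl(\E_\nu[\eta_t(k)]\bigr)^2 \le \Bigl(\sup_{k > z} \E_\nu[\eta_t(k)]\Bigr) \E_\nu[N_t],
\]
and then bound the supremum by a moderate-deviation tail estimate for the underlying random walk. For the supremum, the stirring representation of Subsection~\ref{stirring} gives $\E_\nu[\eta_t(k)] = \sum_{i \le 0} \rho_i P_0(\xi_t = k - i)$, and bounding $\rho_i \le 1$ and telescoping the resulting sum over $i \le 0$ yields the clean majorization
\[
\E_\nu[\eta_t(k)] \le \sum_{i \le 0} P_0(\xi_t = k - i) = \sum_{j \ge k} P_0(\xi_t = j) = P_0(\xi_t \ge k),
\]
so that by monotonicity $\sup_{k > z} \E_\nu[\eta_t(k)] \le P_0(\xi_t \ge \lceil z \rceil)$.

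The main analytic input is then a tail bound of the form $P_0(\xi_t \ge z) = O\bigl(e^{-z^2/(2\sigma^2 t)}\bigr)$ (possibly with a harmless polynomial prefactor), which I would import from the appendix of random walk estimates. Condition~\ref{mgf} is exactly what is needed for a Chernoff-type estimate: both scalings of $z$ under consideration satisfy $z/t \to 0$ and $z/\sqrt{t} \to \infty$, placing us in the moderate-deviation regime where the Legendre dual $\inf_{\theta} e^{-\theta z + t\log E[e^{\theta\xi_1}]}$, optimized at $\theta = z/(\sigma^2 t)$, produces the Gaussian exponent with the appropriate correction. I do not anticipate a serious obstacle here; this is essentially the one place Condition~\ref{mgf} is genuinely used, and the matching of lower-order terms is routine.

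The two explicit orders then follow by direct substitution. For $z = \sigma b_t(x + a_t)$, writing $a_t = \log t - \tfrac{1}{2}\log(2\pi) - \log\log t$ and expanding,
\[
\frac{z^2}{2\sigma^2 t} = \frac{(x + a_t)^2}{2\log t} = \frac{\log t}{2} + x - \tfrac{1}{2}\log(2\pi) - \log\log t + o(1),
\]
so $e^{-z^2/(2\sigma^2 t)} \sim \sqrt{2\pi}\, e^{-x} \log t/\sqrt{t}$; combining with $\E_\nu[N_t] = O(e^{-x})$ gives the claimed $O(e^{-2x}\log t/\sqrt{t})$. The analogous computation for $z = \sigma b_t^{(L)}(x + a_t^{(L)})$ with $a_t^{(L)} = 2\log L - \tfrac{1}{2}\log(4\pi\log L)$ and $b_t^{(L)} = \sqrt{t/(2\log L)}$ yields $e^{-z^2/(2\sigma^2 t)} \sim \sqrt{4\pi\log L}\, e^{-x}/L$, and hence the corresponding $O(e^{-2x}\sqrt{\log L}/L)$ bound.
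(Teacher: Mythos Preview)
Your proposal is correct and follows essentially the same route as the paper: bound $\E_\nu[\eta_t(k)]\le P_0(\xi_t\ge k)$ via the stirring representation, pull out $\sup_{k>z}\E_\nu[\eta_t(k)]\le P_0(\xi_t>z)$ to reduce the sum of squares to $P_0(\xi_t>z)\,\E_\nu[N_t]$, and then invoke the Chernoff-type estimate (Lemma~\ref{RWchern}) to get $P_0(\xi_t>z)=\exp\bigl(-z^2/(2\sigma^2 t)+O(1)\bigr)$. Your explicit computations of $e^{-z^2/(2\sigma^2 t)}$ for the two scalings are also in line with the paper's treatment.
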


{\bf Step 3.}
The final step to proving Theorems \ref{main} and \ref{mainLthm} is to verify (iii) in Lemma \ref{liggettpoisson}. For this we use Lemma \ref{fullstepdom}, which says that it suffices to consider deterministic initial profiles. In the context of Theorem \ref{main} and Theorem \ref{mainLthm}(a), we want to use $\eta(k) = 1$ for $k \le 0$ and $\eta(k) = 0$ for $k > 0$. 
In Section \ref{covcomp} (Theorem \ref{fullcov}), when $z = \sigma b_t(x + a_t)$, we show that 
\begin{equation}\label{eq:cov0}
	\sC_t(\eta, z) \to 0, \qquad t\to\infty.  
\end{equation}

For Theorem \ref{mainLthm}(b), we consider the deterministic initial condition $\eta_L(k) = 1$ for $k \in \{-L, \ldots, 0\}$ and $\eta_L(k) = 0$ otherwise. 
When $z = \sigma b_t^{(L)}(x+a_t^{(L)})$, 
\begin{equation}\label{eq:Lcov0}
	\sC_t(\eta_L,z) \to 0, \qquad t\to\infty, 
\end{equation}
by Theorem \ref{Lcovcomp}. Rates of convergence in \eqref{eq:cov0} and \eqref{eq:Lcov0} are provided in Section \ref{covcomp}.

{\bf Step 4.} Lastly, we put Steps 1--3 together to complete the proofs. 
In particular, by Lemma \ref{liggettpoisson}, Lemmas \ref{fullstepdom} and \ref{SoS} along with \eqref{eq:fullmean} and \eqref{eq:cov0} prove Theorem \ref{main}. Lemmas \ref{liggettpoisson}, \ref{fullstepdom}, and \ref{SoS} with \eqref{eq:fastLmean} and \eqref{eq:cov0} prove Theorem \ref{mainLthm}(a), and with \eqref{eq:slowLmean} and \eqref{eq:Lcov0} prove Theorem \ref{mainLthm}(b). \hfill $\square$

\medskip
\medskip
We finish this section with a proof of Lemma \ref{SoS}. 

\begin{proof}[Proof of Lemma \ref{SoS}] Recall that we may write $\eta_t(k)$ in terms of the stirring variables as 
\[
	\eta_t(k) = \sum_{i \le 0} \eta_0(i) 1_{\{ \xi_i(t) = k\}}, 
\]
which gives
\[
	\E_\nu[\eta_t(k)]  \le 
	\sum_{i\le0} P_{i}(\xi_t = k) = P_0(\xi_t \ge k). 
\] 
Since $\E_\nu[N_t] = \sum_{k>z} \E_\nu[\eta_t(k)]$, we then have 
\begin{align*}
	\sum_{k > z} \left( \E_\nu\left[ \eta_t(k) \right] \right)^2 
	&\le P_0(\xi_t > z)\E_\nu[N_t] = \E_\nu[N_t] \exp\left(-\frac{z^2}{2\sigma^2 t} + O(1)\right), 
\end{align*}
using Lemma \ref{RWchern}. 
\end{proof}

\section{Computation of the mean}\label{meancomp}

Here we show that the mean of $N_t$ converges to the parameter of the appropriate Poisson distribution in the contexts of Theorems \ref{main} and \ref{mainLthm}. We recall the following conventions: $x_t \sim y_t$ means $\lim_{t\to\infty} x_t/y_t = $, $x_t = O(y_t)$ means $x_t \le Cy_t$ for some $C > 0$ and large $t$, and $x_t=o(y_t)$ when $\lim_{t\to\infty}x_t/y_t =0$.  Recall also the representation \eqref{eq:stirringN}.

\begin{theorem}\label{ArrMean} Let $\eta \in \{0,1\}^\Z$ be defined by $\eta(k) = 1$ for $k \le 0$ and $\eta(k)=0$ otherwise. In the setup of Theorem \ref{main}, 
\begin{equation}\label{eq:nutoeta}
	\E_\nu[N_t] = \bar\rho\E_\eta[N_t] + o(1), \qquad t \to \infty, 
\end{equation}
and 
\[
	\lim_{t\to\infty} \E_\eta[N_t] = \sigma e^{-x}. 
\]
\end{theorem}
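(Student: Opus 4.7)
The plan is to exploit the stirring representation from Subsection \ref{stirring}, which yields
\[
\E_\nu[N_t] = \sum_{i \le 0} \rho_i\, P_0(\xi_t > z - i), \qquad z = \sigma b_t(x + a_t),
\]
and to compare the periodic Bernoulli profile $\nu$ against the full-step profile $\eta$ by writing $\rho_i = \bar\rho + (\rho_i - \bar\rho)$. Because the fluctuation $\rho_i - \bar\rho$ has zero mean over each period of length $m$, while $F_t(n) := P_0(\xi_t > z + n)$ varies only on the much longer scale $\sqrt{t}$, the fluctuating part should telescope away. The second assertion will then follow from a sharp Gaussian tail asymptotic for $\xi_t$, which the calibration of $a_t$ and $b_t$ in \eqref{eq:ArrScaling} is tuned to match.

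For \eqref{eq:nutoeta}, I would first write
\[
\E_\nu[N_t] - \bar\rho\, \E_\eta[N_t] = (1 - \bar\rho)\,P_0(\xi_t > z) + \sum_{i \le -1}(\rho_i - \bar\rho)\,P_0(\xi_t > z - i),
\]
and discard the boundary term using the crude moderate-deviations bound $P_0(\xi_t > z) = o(1)$ (which Condition \ref{mgf} delivers, and which is already used in the proof of Lemma \ref{SoS}). For the main sum I would group indices into blocks $i \in \{-km-1,\ldots,-km-m\}$, $k \ge 0$. By periodicity the $k$th block equals $\sum_{j=1}^m(\rho_{-j} - \bar\rho)\,F_t(km+j)$, and since $\sum_{j=1}^m(\rho_{-j} - \bar\rho) = 0$ I may subtract $F_t(km+1)$ from each $F_t(km+j)$ at no cost. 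Each resulting increment is bounded in absolute value by $P_0(\xi_t \in (z+km,\, z+(k+1)m])$, so the double sum telescopes to at most $\bigl(\sum_j|\rho_{-j}-\bar\rho|\bigr)\,P_0(\xi_t > z) = o(1)$.

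For $\lim_t \E_\eta[N_t] = \sigma e^{-x}$, I would use monotonicity of $F_t$ to replace $\E_\eta[N_t] = \sum_{j \ge 0}F_t(j)$ by the integral $\int_0^\infty F_t(y)\,dy = \E_0[(\xi_t - z)_+]$ at a further cost of $O(P_0(\xi_t > z)) = o(1)$. Under Condition \ref{mgf}, a sharp moderate-deviations (Bahadur--Rao type) expansion gives
\[
P_0(\xi_t > w) \sim \frac{\sigma\sqrt{t}}{w\sqrt{2\pi}}\,\exp\!\left(-\frac{w^2}{2\sigma^2 t}\right)
\]
uniformly for $w$ in the window $[z, \infty)$ of order $\sigma\sqrt{t\log t}$, an estimate I expect to be collected among the random-walk bounds of the Appendix. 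Substituting and changing variables $u = (z+y)/(\sigma\sqrt{t})$ recasts the integral as $\sigma\sqrt{t}\int_\omega^\infty \phi(u)/u\,du$ with $\omega = z/(\sigma\sqrt{t}) = (x+a_t)/\sqrt{\log t}$, and the Mills estimate $\int_\omega^\infty \phi(u)/u\,du \sim \phi(\omega)/\omega^2$ reduces the problem to a calibration check. The specific choice $a_t = \log(t/(\sqrt{2\pi}\log t))$ is exactly tuned so that $\omega^2/2 = \tfrac{1}{2}\log t - \log\log t - \tfrac{1}{2}\log(2\pi) + x + o(1)$ and $\omega^2 \sim \log t$, yielding the limit $\sigma e^{-x}$.

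The principal technical obstacle is securing the sharp Gaussian tail asymptotic uniformly across the moderate-deviations window of order $\sqrt{t\log t}$; any cruder Chernoff bound would lose the prefactor $\sigma\sqrt{t}/(w\sqrt{2\pi})$ and the $\log\log t$ correction encoded in $a_t$, both of which are essential to land on a nontrivial limit. By contrast, the block-averaging argument for \eqref{eq:nutoeta} is robust and requires only $P_0(\xi_t > z) \to 0$.
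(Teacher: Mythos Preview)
Your approach is essentially the same as the paper's, with slightly different bookkeeping. For \eqref{eq:nutoeta}, the paper groups the sum by residue class modulo $m$, converts each class into a positive part $E_0[(\xi_t - z - j)^+]$, and compares with $E_0[(\xi_t - z)^+]$ directly; your block-telescoping argument using $\sum_j(\rho_{-j}-\bar\rho)=0$ is a clean variant of the same idea and arguably more transparent. For the second limit, the paper likewise passes to $E_0[(\xi_t-z)^+]$ and then to a Gaussian integral via Lemma~\ref{FellerNormal}, exactly as you propose.

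One point does need tightening. The sharp asymptotic $P_0(\xi_t > w) \sim \frac{\sigma\sqrt{t}}{w\sqrt{2\pi}}\exp(-w^2/(2\sigma^2 t))$ is \emph{not} available uniformly on all of $[z,\infty)$: the Appendix estimate (Lemma~\ref{FellerNormal}) requires $w/(\sigma\sqrt{t}) = o(t^{1/6})$, and for larger $w$ the true tail is governed by a non-Gaussian rate function. The paper handles this by truncating the integral at $u = \log t$ and disposing of the remainder via the Chernoff bound of Lemma~\ref{RWchern}, which gives $\sqrt{t}\int_{\log t}^\infty P_0(\xi_t > \sigma u\sqrt{t})\,du = O(\sqrt{t}\,e^{-(\log t)^2/2}) = o(1)$. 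Your phrase ``the window $[z,\infty)$ of order $\sigma\sqrt{t\log t}$'' suggests you may have this truncation in mind, but it is not a free uniformity; you should make the cut explicit and cite the two separate estimates for the main and tail pieces.
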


\begin{proof} Since $\rho_j = \rho_{j-m}$ for each $j<0$ and $\rho_0=1$, we have 
\begin{align*}
	\E_\nu[N_t] &= \sum_{i \le 0} \rho_i P_i(\xi_t > z) \\
	&= P_0(\xi_t > z) + \sum_{j=1}^m \rho_{-j} \sum_{i \ge 0} P_{-im-j}(\xi_t > z) \\
	&= P_0(\xi_t > z)  + \sum_{j=1}^m \rho_{-j} \sum_{i \ge 0} P_0(\xi_t -z - j > im) \\
	&= \frac{1}{m}\sum_{j=1}^m \rho_{-j} E_0\left[(\xi_t - z - j)^+\right] + o(1). 
\end{align*}
A simpler calculation gives 
\begin{equation}\label{eq:Eeta}
	\E_\eta[N_t] = E_0\left[(\xi_t-z)^+\right] + o(1). 
\end{equation}
Thus, 
\begin{align*}
	\left| \bar\rho \E_\eta[N_t] - \E_\nu[N_t] \right| &= \frac{1}{m}\sum_{j=1}^m \rho_{-j} \left( E_0\left[(\xi_t-z)^+\right] - E_0\left[(\xi_t - z - j)^+\right] \right) + o(1) \\
	&= \frac{1}{m} \sum_{j=1}^m \rho_{-j} \left( E_0\left[ (\xi_t-z)1_{\{0 < \xi_t-z \le j\}} \right] + jP_0(\xi_t > z+j) \right) + o(1) \\
	&\le mP_0(\xi_t > z) + o(1) = o(1),
\end{align*}
since $z \sim \sigma\sqrt{t\log t}$. This shows \eqref{eq:nutoeta}. 

Now we show that $\E_\eta[N_t] \to \sigma e^{-x}$. 
Starting from \eqref{eq:Eeta}, let $w = z/(\sigma\sqrt{t})$, and note that $w = O(\sqrt{\log t})$.  
Recalling $X \sim \sN(0,1)$, 
\begin{align}\nonumber
	&E_0\left[ (\xi_t - z)^+\right] = \sigma \sqrt{t} \int_w^\infty P_0(\xi_t >\sigma u\sqrt{t})\,du \\ \label{eq:O1}
	&= \sigma\sqrt{t} \int_w^{\log t} P_0(\xi_t > \sigma u\sqrt{t})\,du + o(1)\\ \nonumber 
	&= \sigma\sqrt{t}\int_w^{\log t} P(X > u)\,du + \sigma\sqrt{t} \int_w^{\log t}P(X > u) \left[ \frac{P_0(\xi_t >\sigma u\sqrt{t})}{P(X>u)}-1\right] + o(1)\\ \label{eq:O2}
	&= \sigma\sqrt{t} \int_w^\infty P(X > u)\,du + O\left( \int_w^\infty u^3P(X > u)\,du\right) +o(1) \\ 
	&= \sigma \sqrt{t} E\left[(X-w)^+\right] + o(1). 
\end{align}
Above, \eqref{eq:O1} is justified by Lemma \ref{RWchern} and 
\[
	\sqrt{t} \int_{\log t}^\infty P_0(\xi_t > \sigma u \sqrt{t})\,du = \sqrt{t} \int_{\log t}^\infty e^{-u^2/2 + O(1)}\,du = O\left(\sqrt{t}e^{-(\log t)^2/2}\right).
\]
\eqref{eq:O2} follows from Lemma \ref{FellerNormal}
as well as from Lemma \ref{flemma}, which gives 
\[
	\sqrt{t}\int_{\log t}^\infty P(X>u)\,du = \sqrt{t}E\left[(X- \log t)^+\right] = O\left(\frac{\sqrt{t}e^{-(\log t)^2/2}}{(\log t)^2}\right) . 
\]
Note that for $z = \sigma b_t(x+a_t)$, 
\begin{align*}
	e^{-w^2/2} &= \frac{\sqrt{2\pi} e^{-x+o(1)}\log t}{\sqrt{t}}, 
\end{align*}
so by Lemma \ref{flemma}, 
\begin{align*}
	\sigma \sqrt{t}E\left[(X-w)^+\right] &= \frac{\sigma\sqrt{t}\varphi(w)}{w^2} + O\left(\frac{\sqrt{t}\varphi(w)}{w^4}\right) = \sigma e^{-x} \left(1 + o(1)\right). \qedhere 
\end{align*}
\end{proof}

The next theorem concerns convergence of the mean of $N_t$ for the $L$-step setting.

\begin{theorem}\label{Lthm} For each $L > 0$, define $\eta_L \in \{0,1\}^\Z$ by $\eta_L(k) = 1$ for $-L \le k \le 0$ and $\eta_L(k) = 0$ otherwise. With the assumptions of Theorem \ref{mainLthm}, 
\begin{equation}\label{eq:nutoetaL}
	\E_{\nu_L}[N_t] = \bar\rho \E_{\eta_L}[N_t] + o(1), \qquad t\to\infty. 
\end{equation}
Furthermore, 
\begin{enumerate}

	\item[(a)] If $L \sqrt{\frac{\log t}{t}} \to c \in (0, \infty]$ and $z = \sigma b_t(x+a_t)$ for $a_t$ and $b_t$ as in \eqref{eq:ArrScaling}, then 
	\[
		\lim_{t\to\infty}\E_{\eta_L}[N_t] = \sigma(1 - e^{-c/\sigma})e^{-x}. 
	\]

	\item[(b)] If $L\to\infty$ such that $L \sqrt{\frac{\log t}{t}} \to 0$ and $z = \sigma b_t(x+a_t)$ for $a_t=a^{(L)}$ and $b_t=b_t^{(L)}$ as in \eqref{eq:Lscaling}, then 
	\[
		\lim_{t\to\infty}\E_{\eta_L}[N_t] = e^{-x}. 
	\]

\end{enumerate}
\end{theorem}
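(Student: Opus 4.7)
The strategy is to reduce the $L$-step mean calculation to the full-step analysis of Theorem~\ref{ArrMean}. By the stirring representation, $\E_{\eta_L}[N_t] = \sum_{k=0}^L P_0(\xi_t > z + k)$ and $\E_{\nu_L}[N_t] = \sum_{i=-L}^0 \rho_i P_i(\xi_t > z)$. For the relation \eqref{eq:nutoetaL}, I would run the same periodicity reduction as in the proof of Theorem~\ref{ArrMean}: group the indices $i \in \{-L, \ldots, -1\}$ by residue class modulo $m$, apply $\rho_{-km-j} = \rho_{-j}$, and replace each $\rho_{-j}$ by $\bar\rho$. The errors are bounded by $O(m\,P_0(\xi_t > z))$, arising from the $i=0$ term and from boundary irregularities when $L$ is not a multiple of $m$, and vanish in both regimes by Lemma~\ref{RWchern}.

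For part (a), I would write
\[
\E_{\eta_L}[N_t] \;=\; \E_\eta[N_t] \;-\; \sum_{j > L} P_0(\xi_t > z + j).
\]
By Theorem~\ref{ArrMean} the first term tends to $\sigma e^{-x}$. For the second, observe that under the scaling \eqref{eq:ArrScaling},
\[
z + L \;=\; \sigma b_t \bigl(x_t^{(L)} + a_t\bigr), \qquad x_t^{(L)} \;:=\; x + \tfrac{L}{\sigma}\sqrt{\tfrac{\log t}{t}} \;\to\; x + \tfrac{c}{\sigma},
\]
so the tail sum is a full-step mean at a shifted argument. An inspection of the proof of Theorem~\ref{ArrMean} shows that its estimate is uniform for $x$ in compact sets, hence the tail sum tends to $\sigma e^{-(x+c/\sigma)}$ (with the convention $e^{-\infty}=0$ when $c=\infty$), yielding $\E_{\eta_L}[N_t] \to \sigma(1 - e^{-c/\sigma})e^{-x}$.

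Part (b) requires a different route because under the dilute scaling $L = o(b_t)$ the full-step mean itself diverges. The idea is that now $L/\sqrt{t}$ is small enough that the summands $P_0(\xi_t > z+k)$ are essentially constant across $k \in \{0,\ldots,L\}$. Writing $w_k = (z+k)/(\sigma\sqrt{t})$ and $w = w_0$, one checks
\[
w_k^2 - w^2 \;\le\; \frac{2Lw}{\sigma\sqrt{t}} + \frac{L^2}{\sigma^2 t} \;\longrightarrow\; 0 \quad \text{uniformly in } k \le L,
\]
using $L^2/t \to 0$ and $L\sqrt{\log L/t} \to 0$, the latter following from $L\sqrt{\log t/t}\to 0$ together with $\log L \le \log t$. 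Combined with sharp Gaussian tail asymptotics from Lemmas~\ref{RWchern} and~\ref{FellerNormal}, this yields $P_0(\xi_t > z+k) = P_0(\xi_t > z)(1+o(1))$ uniformly, so $\E_{\eta_L}[N_t] = (L+1)\,P_0(\xi_t > z)(1+o(1))$. The scaling \eqref{eq:Lscaling} is calibrated so that $w^2 = \log L^2 + 2x - \log(2\pi\log L^2) + o(1)$, whence $P_0(\xi_t > z) \sim \varphi(w)/w \sim e^{-x}/L$, giving $\E_{\eta_L}[N_t] \to e^{-x}$.

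The main obstacle lies in part (b): the scaling $(a_t^{(L)}, b_t^{(L)})$ has to be calibrated precisely so that the sharp Mills-ratio expansion returns exactly $e^{-x}$, and one must establish uniform tail control of $P_0(\xi_t > z+k)$ for $k \le L$ in the moderate-deviation window $w \sim \sqrt{2\log L}$, which is narrower than the $\sqrt{\log t}$ window of Theorem~\ref{ArrMean}. This is where Condition~\ref{mgf} plays a role, since it delivers the exponential tail bounds of Lemma~\ref{RWchern} needed to compare random-walk tails to Gaussian tails across the entire window $[w, w + L/(\sigma\sqrt{t})]$.
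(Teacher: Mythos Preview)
Your proposal is correct and takes a genuinely different route from the paper's proof. The paper handles both (a) and (b) through a single device, Lemma~\ref{Nasym}: it replaces the sum $\sum_{i=0}^L P_0(\xi_t > z+i)$ by the integral $\sigma\sqrt{t}\bigl(f(w) - f(w+M)\bigr)$ with $f(u)=E[(X-u)^+]$, $w=z/(\sigma\sqrt t)$, $M=L/(\sigma\sqrt t)$, and then expands this difference using the Mills-ratio asymptotics of Lemma~\ref{flemma}. This gives a unified computation in which (a) and (b) fall out by evaluating the same expression under the two scalings.

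Your approach instead treats the two regimes by separate, more elementary reductions. For (a) you write $\E_{\eta_L}[N_t]$ as the difference of two full-step means and invoke Theorem~\ref{ArrMean} twice; this is slick because it recycles the already-completed computation. For (b) you exploit the smallness of $L/\sqrt t$ directly to show the $L+1$ summands are asymptotically equal, then compute a single tail probability. What you gain is conceptual transparency and avoidance of the $f$-expansion; what the paper's route buys is uniformity of method and a single error analysis.

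One point to tighten in your part (a): the phrase ``uniform for $x$ in compact sets'' covers $c<\infty$, but when $c=\infty$ you have $x_t^{(L)}\to\infty$, so compact-set uniformity does not apply and the convention $e^{-\infty}=0$ needs justification. This is easily patched: bound the tail sum by $C\sigma\sqrt t\,\varphi(w')/w'$ with $w'=(z+L)/(\sigma\sqrt t)$ via Lemma~\ref{RWchern}, and note $w'L^{-1}\sigma\sqrt t \cdot L/(\sigma\sqrt t) \cdot w \ge wL/(\sigma\sqrt t)\sim L\sqrt{\log t/t}/\sigma\to\infty$ forces $\sqrt t\,\varphi(w')\to 0$ fast enough to kill the $\sqrt{\log t}$ prefactor.
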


\begin{proof} Proving \eqref{eq:nutoetaL} follows the same argument as for \eqref{eq:nutoeta} and is omitted. Now consider part (a). For $f(u) = \varphi(u) - uP(X>u)$, Lemma \ref{flemma} gives 
\begin{align*}
	&\sqrt{t}\left(f\left(\frac{z}{\sigma\sqrt{t}}\right) - f\left(\frac{L+z}{\sigma\sqrt{t}}\right)\right) \\
	&= \frac{\sqrt{t}\varphi\left(z/(\sigma\sqrt{t})\right)}{\left(z/(\sigma\sqrt{t})\right)^2} \left( 1 - \frac{e^{-\left(L^2/2 + Lz\right)/(\sigma^2 t)}}{\left(1 + L/z\right)^2} \right) + O\left( \frac{\sqrt{t}\varphi\left(z/(\sigma\sqrt{t})\right)}{(z/\sqrt{t})^4}\right) \\
	&= \frac{\sigma^2 t^{3/2} e^{-z^2/(2\sigma^2 t)}}{z^2\sqrt{2\pi}}\left( 1 - \frac{e^{-\left(L^2/2 + Lz\right)/(\sigma^2 t)}}{\left(1 + L/z\right)^2} \right) + O\left( \frac{t^{5/2}e^{-z^2/(2\sigma^2t)}}{z^4}\right). 
\end{align*}
We have that 
\[
	\frac{z^2}{2\sigma^2 t} = \frac{1}{2}\log t + x - \log\sqrt{2\pi} - \log\log t + o(1), \qquad t\to\infty, 
\]
and hence
\[
	\frac{\sigma^2 t^{3/2} e^{-z^2/(2\sigma^2 t)}}{z^2\sqrt{2\pi}} \to e^{-x}, 
\]
as well as 
\[
	\frac{t^{5/2}e^{-z^2/(2\sigma^2t)}}{z^4} = O\left( \frac{te^{-x}}{z^2} \right) = O\left( \frac{e^{-x}}{\log t}\right).  
\]
Then by Lemma \ref{Nasym} below, 
\begin{align*}
	\frac{\E_{\eta_L}[N_t]}{\sigma} &\sim \sqrt{t}\left(f\left(\frac{z}{\sigma\sqrt{t}}\right) - f\left(\frac{L+z}{\sigma\sqrt{t}}\right) \right) 
	\sim e^{-x} \left( 1 -\frac{e^{-\left(L^2/2 + Lz\right)/(\sigma^2 t)}}{\left(1 + L/z\right)^2} \right). 
\end{align*}
Now note, by the assumption $Lt^{-1/2}(\log t)^{1/2} \to c > 0$, 
\[
	\frac{Lz}{\sigma^2 t} = \frac{L}{\sigma} \sqrt{\frac{\log t}{t}} \left(1 + \frac{x-\log\sqrt{2\pi}-\log\log t}{\log t}\right) \to \frac{c}{\sigma}. 
\]
If $c = \infty$, then this shows $E[N_t]/\sigma \to e^{-x}$. If $c < \infty$, then $L/z = o(1)$ and $L^2/t = o(1)$, and hence 
\[
	\frac{\E_{\eta_L}[N_t]}{\sigma} \sim e^{-x} \left( 1 - e^{-Lz/(\sigma^2 t)} \right) \to e^{-x}\left(1 - e^{-c/\sigma}\right), 
\]
which proves part (a). 

For part (b), as $z \sim \sigma\sqrt{t\log L^2}$, we have $L/z = o(1)$, $L^2/t = o(1)$, and $Lz/t = o(1)$, and Lemma \ref{flemma} gives
\begin{align*}
	&\sqrt{t}\left(f\left(\frac{z}{\sigma\sqrt{t}}\right) - f\left(\frac{L+z}{\sigma\sqrt{t}}\right)\right) \\
	&= \frac{\sqrt{t}\varphi\left(z/(\sigma\sqrt{t})\right)}{\left(z/(\sigma\sqrt{t})\right)^2} \left( 1 - \frac{e^{-\left(L^2/2 + Lz\right)/(\sigma^2 t)}}{\left(1 + L/z\right)^2} \right) + O\left( \frac{L\varphi\left(z/(\sigma\sqrt{t})\right)}{(z/\sqrt{t})^3}\right) \\
	&= \frac{\sigma^2 t^{3/2} e^{-z^2/(2\sigma^2 t)}}{z^2\sqrt{2\pi}}\left( 1 - \frac{e^{-\left(L^2/2 + Lz\right)/(\sigma^2 t)}}{\left(1 + L/z\right)^2} \right) + O\left( \frac{Lt^{3/2}e^{-z^2/(2\sigma^2t)}}{z^3}\right). 
\end{align*}
Since 
\[
	\frac{z^2}{2\sigma^2t} =  \log L + x - \log\sqrt{2\pi} - \frac{1}{2}\log\log L^2 + o(1), 
\]
we have that 
\begin{equation}\label{eq:sqrtphi}
	\frac{\sigma^2 t^{3/2} e^{-z^2/(2\sigma^2 t)}}{z^2\sqrt{2\pi}} \sim \frac{\sqrt{t}e^{-x}}{L\sqrt{\log L^2}}. 
\end{equation}
Furthermore, since $z \sim \sigma\sqrt{t\log L^2}$, 
\[
	\frac{Lt^{3/2}e^{-z^2/(2\sigma^2t)}}{z^3} = O\left( \frac{\sqrt{t}e^{-x}}{L\sqrt{\log L^2}}\cdot \frac{L}{z} \right) = O\left(\frac{e^{-x}}{\log L}\right). 
\]
From this, \eqref{eq:sqrtphi}, $z \sim \sigma\sqrt{t\log L^2}$, $L \to \infty$, and $L = o(\sqrt{t})$, it follows that 
\begin{align*}
	\frac{\E_{\eta_L}[N_t]}{\sigma} &\sim \sqrt{t}\left(f\left(\frac{z}{\sigma\sqrt{t}}\right) - f\left(\frac{L+z}{\sigma\sqrt{t}}\right) \right) \\
	&\sim \frac{\sqrt{t}e^{-x}}{L\sqrt{\log L^2}} \left( 1 - e^{-\left(L^2/2 + Lz\right)/(\sigma^2 t)} \right) \\
	&= \frac{\sqrt{t}e^{-x}}{L\sqrt{\log L^2}}\left(\frac{L^2}{2\sigma^2t} + \frac{L\sqrt{\log L^2}}{\sigma \sqrt{t}}\right) + O\left( \frac{e^{-x}L^3}{t^{3/2}\sqrt{\log L}} + \frac{e^{-x}Lz^2}{t^{3/2}} \right)\\
	&= \frac{e^{-x}}{\sigma} + o(1), 
\end{align*}
which completes part (b). 
\end{proof}

We now prove a lemma used to show the previous result. 

\begin{lemma}\label{Nasym} 
Let $\eta_L$ be as in Theorem \ref{Lthm}, and recall $z=\sigma b_t(x+a_t)$. For $X \sim \sN(0,1)$, let $f(u) = E[(X-u)^+] = \varphi(u) - uP(X > u)$, where $\varphi(u) = (2\pi)^{-1/2}e^{-u^2/2}$. If either 
\begin{enumerate}[(i)]
\item $a_t$ and $b_t$ are as in \eqref{eq:ArrScaling}, or

\item $a_t=a^{(L)}$ and $b_t=b_t^{(L)}$ are as in \eqref{eq:Lscaling} and $L\sqrt{\frac{\log t}{t}} \to 0$ as $t \to \infty$, 
\end{enumerate}
then, 
\begin{equation}\label{eq:fdiff}
	\E_{\eta_L}[N_t]\sim \sigma\sqrt{t}\left(f\left(\frac{z}{\sigma\sqrt{t}}\right) - f\left(\frac{L+z}{\sigma\sqrt{t}}\right)\right), \qquad t \to \infty. 
\end{equation}
\end{lemma}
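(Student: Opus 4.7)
The plan is to follow the strategy used in the proof of Theorem \ref{ArrMean}, with an additional upper truncation at $L$. By the stirring representation \eqref{eq:stirringN} and translation invariance of $\xi_t$,
\[
\E_{\eta_L}[N_t] = \sum_{i=-L}^{0} P_i(\xi_t > z) = \sum_{j=0}^{L} P_0(\xi_t > z+j).
\]
Since $u\mapsto P_0(\xi_t > z+u)$ is decreasing, the standard sum-to-integral comparison together with Lemma \ref{RWchern} yields
\[
\E_{\eta_L}[N_t] = \int_0^L P_0(\xi_t > z+u)\,du + O\bigl(e^{-z^2/(2\sigma^2 t) + O(1)}\bigr) = E_0[(\xi_t - z)^+] - E_0[(\xi_t - z - L)^+] + o(1),
\]
the last error being $O(\log t/\sqrt{t})$ in case (i) and $O(\sqrt{\log L}/L)$ in case (ii), which are $o(1)$ in either case.

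Next, for each of the two truncated means I would apply the Gaussian approximation already used in the proof of Theorem \ref{ArrMean}: after the change of variables $v = (z+u)/(\sigma\sqrt{t})$, split the tail integral at $v=\log t$, dispose of the far tail via Lemma \ref{RWchern}, and use the local limit/Berry--Esseen-type estimate of Lemma \ref{FellerNormal} to replace $P_0(\xi_t > \sigma v\sqrt{t})$ by $P(X > v)$ on the main range. Since $\int_w^{w'} P(X > v)\,dv = f(w) - f(w')$, this produces
\[
E_0[(\xi_t - z)^+] = \sigma\sqrt{t}\,f\bigl(z/(\sigma\sqrt{t})\bigr) + o(1), \qquad E_0[(\xi_t - z - L)^+] = \sigma\sqrt{t}\,f\bigl((z+L)/(\sigma\sqrt{t})\bigr) + o(1),
\]
and subtracting gives the right-hand side of \eqref{eq:fdiff} up to an $o(1)$ error.

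The main obstacle is showing that this $o(1)$ error is of lower order than the main term after cancellation. Using Lemma \ref{flemma} and the asymptotic $z^2/(2\sigma^2 t) = \tfrac12\log t - \log\log t + x - \log\sqrt{2\pi} + o(1)$ in case (i), or $z^2/(2\sigma^2 t) = \log L + x - \log\sqrt{2\pi} - \tfrac12\log\log L^2 + o(1)$ in case (ii), the main term turns out to be of constant order $\Theta(e^{-x})$ in both settings, so any genuinely $o(1)$ error is absorbed. Case (ii) is especially delicate: there $\sigma\sqrt{t}\,f(w)$ and $\sigma\sqrt{t}\,f(w+\epsilon)$ are each large individually, while the difference $\sim \sigma\sqrt{t}\,\epsilon\,P(X>w)$ arises from a first-order Taylor expansion valid because $\epsilon w = Lz/(\sigma^2 t)\to 0$ under the hypothesis $L\sqrt{\log t/t}\to 0$. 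The subtle point is verifying that the individual approximation errors remain negligible after subtraction, which is where the uniformity in $v$ of the Berry--Esseen-type estimate in Lemma \ref{FellerNormal} over the integration window $[w,w+\epsilon]$ becomes crucial.
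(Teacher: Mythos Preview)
Your proposal is correct and follows essentially the same route as the paper: the paper sets $w=z/(\sigma\sqrt t)$, $M=L/(\sigma\sqrt t)$, rewrites $\E_{\eta_L}[N_t]=\sigma\sqrt t\int_w^{w+M}P_0(\xi_t>\sigma u\sqrt t)\,du+o(1)$, and then replaces $P_0(\xi_t>\sigma u\sqrt t)$ by $P(X>u)$ via Lemma \ref{FellerNormal} (splitting at $\log t$ and using Lemma \ref{RWchern} for the far tail, exactly as you outline), finishing with $\int_w^{w+M}P(X>u)\,du=f(w)-f(w+M)$. Your last paragraph is more worried than it needs to be: there is no delicate cancellation among the error terms, since the approximation error on each of $E_0[(\xi_t-z)^+]$ and $E_0[(\xi_t-z-L)^+]$ is already $O\bigl(\int_w^\infty u^3P(X>u)\,du\bigr)=o(1)$ by the Theorem \ref{ArrMean} argument (this only needs $w\to\infty$ and $w=O(\sqrt{\log t})$, which holds in both cases), so their difference is automatically $o(1)$ while the main term is $\Theta(e^{-x})$.
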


\begin{proof} 
For notational convenience, let $w = z/(\sigma \sqrt{t})$ and $M = L/(\sigma\sqrt{t})$.  
Using \eqref{eq:stirringN}, we have
\begin{align}\nonumber
	\E_{\eta_L}[N_t] &= \sum_{-L \le i \le 0} P_i(\xi_t > z) 
	= \sum_{i=0}^L P_0(\xi_t - z > i) \\\nonumber
	&= \int_0^L P_0(\xi_t - z > u)\,du + o(1) \\\nonumber
	&= \sigma\sqrt{t} \int_{w}^{w+M} P_0\left( \frac{\xi_t}{\sigma \sqrt{t}} > u\right)\,du + o(1) \\\label{eq:whereileftoff}
	&= \sigma \sqrt{t} \int_{w}^{w+M} P\left( X > u\right)\,du + o(1),
\end{align}
where \eqref{eq:whereileftoff} is justified by Lemma \ref{FellerNormal} as follows. First note that $w \to \infty$ and $w = o(t^{1/6})$ in either case (i) or (ii) when $z \sim \sigma \sqrt{t\log t}$ or $z\sim \sigma \sqrt{t\log L^2}$, respectively. If $M = o(t^{1/6})$, then for some $C > 0$ and large enough $t$, 
\begin{equation}\label{eq:normjust}
\begin{aligned}
	&\left| \int_{w}^{w+M} P_0(\xi_t > \sigma u\sqrt{t})\,du - \int_{w}^{w+M} P(X > u)\,du \right| \\ 
		&\le \int_{w}^{w+M} P(X > u) \left| 1 - \frac{P_0(\xi_t > \sigma u\sqrt{t})}{P(X > u)}\right|\,du \le \frac{C}{\sqrt{t}} \int_{w}^\infty u^3 P(X > u)\,du	
	\to 0.
\end{aligned}
\end{equation}
Otherwise, by the proof of Theorem \ref{ArrMean} starting at \eqref{eq:O1}, we have 
\begin{align*}
	\sqrt{t} \int_{w}^{w+M} P_0\left( \frac{\xi_t}{\sigma \sqrt{t}} > u\right)\,du &\sim \sqrt{t} \int_{w}^{\log t} P_0\left( \frac{\xi_t}{\sigma \sqrt{t}} > u\right)\,du \\
	& \sim \sqrt{t} \int_{w}^{\log t} P\left( X > u\right)\,du \sim \sqrt{t} \int_{w}^{w+M} P\left( X > u\right)\,du. 
\end{align*}
This proves \eqref{eq:whereileftoff}. 

To finish showing \eqref{eq:fdiff}, we have from \eqref{eq:whereileftoff} that
\begin{align*}
	\frac{\E_{\eta_L}[N_t]}{\sigma\sqrt{t}} &\sim \int_{w}^\infty P(X > u) \,du- \int_{w+M}^\infty P(X > u)\,du \\
	&=  E\left[ (X - w)1_{\{X > w\}}\right] - E\left[ (X - (w+M))1_{\{X > w+M\}}\right] 
	=  f(w) - f(w+M). \qedhere
\end{align*}
\end{proof}

\section{Bounds for the covariance}\label{covcomp}

Here we show \eqref{eq:cov0} and \eqref{eq:Lcov0} separately. While the proof schemes for the full step and $L$-step are similar, in the $L$-step setting, we need to use the shape of the initial profile to accomodate the $L$-dependent scaling in \eqref{eq:Lscaling}. 

\subsection{Proof of (\ref{eq:cov0})}
First, we consider the full step deterministic initial profile $\eta \in \{0,1\}^\Z$ defined by $\eta(k) = 1$ if and only if $k \le 0$. Suppose the assumptions of Theorem \ref{main}. In particular, $z = \sigma b_t(x+a_t)$ for the scaling in \eqref{eq:ArrScaling}.
Lemma \ref{CovDual} gives 
\[
	\sC_t(\eta,z) \le 2\sum_{i\ge1}p_i \sum_{k\in \Z} \int_0^t \left( E_k[\eta(\xi_s)] - E_{k+i}[\eta(\xi_s)] \right)^2 P_{k+i}(\xi_{t-s} > z)^2\,ds. 
\]
For $k\in \Z$ and $i\geq 1$, and by symmetry of the random walk, we compute 
\begin{align*}
	\left(E_k[\eta(\xi_s)] - E_{k+i}[\eta(\xi_s)]\right)^2 &= \left(P_k(\xi_s \le 0) - P_{k+i}(\xi_s\le 0)\right)^2 = P_0(-k-i < \xi_s \leq -k)^2 \\
	&= P_0(k \le \xi_s < k+i)^2 \le i\sum_{j=0}^{i-1}P_0(\xi_s=k+j)^2, 
\end{align*}
and so 
\begin{align}\nonumber
	\sC_t(\eta,z) 
	&\le 2\sum_{i\ge1}ip_i\sum_{k\in \Z}\sum_{j=0}^{i-1}\int_0^t P_0(\xi_s=k+j)^2P_{k+i}(\xi_{t-s}> z)^2\,ds \\ \nonumber
	&= 2\sum_{i\ge1}ip_i\sum_{k\in \Z}\sum_{j=0}^{i-1} \int_0^t P_0(\xi_s=k)^2P_{k+i-j}(\xi_{t-s}> z)^2\,ds \\ \label{eq:diffzs}
	&\le 2\sum_{i\ge1}i^2p_i \sum_{k\in \Z} \int_0^t P_0(\xi_s=k)^2P_0(\xi_{t-s} > z-k-i)^2\,ds. 
\end{align}
We then have the following. 

\begin{theorem}\label{fullcov} If $\{p_i\}$ is finite range, then 
\[
	\sC_t(\eta, z) = O\left(    \frac{e^{-2x} (\log t)^2}{\sqrt{t}}       \right), \qquad t\to\infty. 
\]
Otherwise, there is $\alpha > 0$ such that for any $\eps \in(0,1/2)$, 
\[
	\sC_t(\eta, z) = O\left(    \frac{\left(e^{-x} \log t\right)^{2(1-\eps)}  }{t^{1/2-\eps}}  + e^{-\alpha \eps \sqrt{t\log t}}    \right), \qquad t\to\infty. 
\]
\end{theorem}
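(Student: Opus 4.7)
The approach begins from \eqref{eq:diffzs}, the bound obtained via duality and integration by parts in Lemma \ref{CovDual}, which controls $\sC_t(\eta,z)$ by a weighted integral over $s\in[0,t]$ of $\sum_k P_0(\xi_s=k)^2 P_0(\xi_{t-s}>z-k-i)^2$. The plan is to bound $P_0(\xi_s=k)^2$ by the squared lattice local central limit theorem and $P_0(\xi_{t-s}>z-k-i)^2$ by the Gaussian Chernoff bound (Lemma \ref{RWchern}), so that the integrand becomes a product of two Gaussian expressions in $k$. The decisive identity is
\[
\frac{k^2}{\sigma^2 s}+\frac{(z-k)^2}{\sigma^2(t-s)} \geq \frac{z^2}{\sigma^2 t},
\]
with equality at $k = sz/t$; completing the square reduces the $k$-sum, by a Gaussian convolution, to an expression of order $\sigma^{-1}\sqrt{(t-s)/(st)}\,e^{-z^2/(\sigma^2 t)}$. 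Integrating over $s$ via the Beta identity $\int_0^t\sqrt{(t-s)/s}\,ds = \pi t/2$ then yields the intermediate bound $\sC_t(\eta,z) = O\bigl(\sqrt{t}\,e^{-z^2/(\sigma^2 t)}\bigr)$.

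For the finite-range case, the local CLT and the Chernoff bound remain uniformly sharp on the contributing ranges of $k$ and $s$, so the above estimate applies directly. To finish, substitute $z=\sigma b_t(x+a_t)$ from \eqref{eq:ArrScaling} and expand
\[
\frac{z^2}{\sigma^2 t} = \frac{(x+a_t)^2}{\log t} = \log t - 2\log\log t - 2\log\sqrt{2\pi} + 2x + o(1),
\]
yielding $\sqrt{t}\,e^{-z^2/(\sigma^2 t)} = O\bigl(e^{-2x}(\log t)^2/\sqrt{t}\bigr)$, which is the first claim. The prefactor $2\sum_{i\ge 1}i^2 p_i = \sigma^2$ is finite by Condition \ref{mgf}, and the shift by $i$ in the tail argument is absorbed trivially since $i$ ranges over a bounded set in the finite-range setting.

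The main obstacle is the non-finite-range case, where neither the local CLT nor the Gaussian tail is sharp simultaneously across all $(k,s,i)$ in the integral. The plan is to truncate at scale $\eps\sqrt{t\log t}$ (in $k$, or equivalently in the $i$-sum) for $\eps\in(0,1/2)$, decomposing the estimate into a bulk contribution where the Gaussian approximations remain valid up to a multiplicative $(1-\eps)$ loss in the effective exponential rate absorbed by the subexponential corrections controlled by Condition \ref{mgf}, and a tail contribution where one argument exceeds the Gaussian regime. On the bulk, the computation above reprises with $e^{-z^2/(\sigma^2 t)}$ replaced by $e^{-(1-\eps)z^2/(\sigma^2 t)}$, which after the same expansion of $z^2/(\sigma^2 t)$ gives
\[
\sqrt{t}\,e^{-(1-\eps)z^2/(\sigma^2 t)} = O\!\left(\frac{(e^{-x}\log t)^{2(1-\eps)}}{t^{1/2-\eps}}\right).
\]
On the tail region, a Cram\'er-type Chernoff bound enabled by Condition \ref{mgf} supplies an exponentially small contribution of order $e^{-\alpha\eps\sqrt{t\log t}}$, uniform in the remaining variables, producing the second term. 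The delicate technical point will be choosing the truncation so that the bulk loss $(1-\eps)$ and the tail decay $\alpha\eps$ interlock cleanly, without the $\eps$-dependent constants emerging from the local CLT and the Gaussian convolution degrading the final rate.
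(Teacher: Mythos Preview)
Your overall plan matches the paper's: the identity $\frac{k^2}{\sigma^2 s}+\frac{(z-k)^2}{\sigma^2(t-s)} = \frac{z^2}{\sigma^2 t} + \frac{t}{\sigma^2 s(t-s)}(k-sz/t)^2$ is the engine of the paper's Lemma \ref{covlemmafull}, and your treatment of the $i$-sum in the non-finite-range case via truncation at scale $\delta z\sim\eps\sqrt{t\log t}$, with the tail $\sum_{i>\delta z} i^2 p_i$ controlled by Condition \ref{mgf}, is precisely what the paper does.

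There is, however, a real gap in the assertion that ``the local CLT and the Chernoff bound remain uniformly sharp on the contributing ranges of $k$ and $s$.'' Neither approximation holds uniformly on $[0,t]\times\Z$: Lemma \ref{localCLT} carries a multiplicative error $\exp\bigl(O(s^{-1/2}+|k|^3/s^2)\bigr)$, unbounded for $|k|\gg s^{2/3}$ or $s$ small, and Lemma \ref{RWchern} carries error $\exp\bigl(O((z-k)^4/(t-s)^3)\bigr)$, unbounded as $s\uparrow t$. Your Beta integral is finite, but the approximation you are integrating is only valid on a bulk region, and the complement is not automatically negligible just because the Gaussian integrand there is small. The paper's Lemma \ref{covlemmafull} handles this via a five-region decomposition of the $(s,k)$-domain: on the bulk $s\in[\sqrt t,\,t-z^{4/3}]$, $|k|\le s^{2/3}\wedge z$ your Gaussian-convolution computation applies; small $s\in[0,\sqrt t]$ is handled by the Markov property $\sum_k P_0(\xi_s=k)P_k(\xi_{t-s}\ge z)=P_0(\xi_t\ge z)$ rather than any local CLT; the regions $|k|>s^{2/3}$ and $z<|k|\le s^{2/3}$ use crude tail bounds; and $s\in[t-z^{4/3},t]$ requires a further split using Chebyshev on $P_0(\xi_{t-s}\ge(1-\eps)z)$. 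This $(k,s)$-decomposition is needed in \emph{both} the finite-range and non-finite-range cases, and is logically prior to the $i$-truncation you describe for the latter.
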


\begin{proof} In Lemma \ref{covlemmafull} below, we show that when $\tilde z = O(\sqrt{t \log t})$, then 
\[	
	\sum_{k\in\Z} \int_0^t P_0(\xi_s=k)^2P_0(\xi_{t-s} \ge \tilde z - k)^2\,ds = O\left(\sqrt{t} \exp\left(-\frac{\tilde z^2}{\sigma^2 t} \right)\right), \qquad t\to\infty. 
\]
In the finite range case, there is some $r > 0$ such that $p_i = 0$ for $i > r$. Then since $-(z-r)^2/t = -z^2/t + o(1)$, we let $\tilde z = z - r$ to obtain from \eqref{eq:diffzs} that 
\begin{align}\nonumber
	\sC_t(\eta, z) &\le \sigma^2  \sum_{k\in \Z} \int_0^t P_0(\xi_s=k)^2P_0(\xi_{t-s} > z-k-r)^2\,ds \\ \label{eq:frcov}
	&= O\left(\sqrt{t}e^{-z^2/(\sigma^2 t)} \right) = O\left(    \frac{e^{-2x} (\log t)^2}{\sqrt{t}}       \right). 
\end{align}
Otherwise, by Condition \ref{mgf}, there is some $\alpha > 0$ such that  for any $\delta \in (0,1)$,
\begin{align}\nonumber
	\sC_t(\eta, z) &\le 2\sum_{i \le \delta z} i^2p_i \sum_{k\in \Z} \int_0^t P_0(\xi_s=k)^2P_0(\xi_{t-s} > z-k-i)^2\,ds + 2t\sum_{i > \delta z} i^2p_i \\\nonumber
	&= \sigma^2 \sum_{k\in \Z} \int_0^t P_0(\xi_s=k)^2P_0(\xi_{t-s} > (1-\delta)z-k)^2\,ds + O\left(e^{-\alpha\delta z}\right) \\\label{eq:gencov}
	&= O\left( \sqrt{t} e^{-(1-\delta)^2z/(\sigma^2 t)} + e^{-\alpha \delta z} \right) = O\left( \frac{\left(e^{-x}\log t\right)^{2(1-\delta)^2}}{t^{(1-\delta)^2 - 1/2}}  + e^{-\alpha \delta z} \right). 
\end{align}
Now choose $\delta$ so that $1-\eps = (1-\delta)^2$, and possibly alter $\alpha$ since then $\delta \ge \eps/2$. 
\end{proof}

We now prove the lemma used to show the previous result. 

\begin{lemma}\label{covlemmafull}
If $z = O(\sqrt{t \log t})$, then 
\[	
	\sum_{k\in\Z} \int_0^t P_0(\xi_s=k)^2P_0(\xi_{t-s} \ge z - k)^2\,ds = O\left(\sqrt{t} e^{-z^2/(\sigma^2 t)}\right), \qquad t\to\infty. 
\]
\end{lemma}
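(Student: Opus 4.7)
The strategy is to apply sharp Gaussian approximations to both probabilities and then complete the square in the exponent to factor out $e^{-z^2/(\sigma^2 t)}$, after which the remaining sum over $k$ reduces to a Gaussian integral and the integration over $s$ to a Beta integral. A critical feature is the doubling of the Gaussian exponent produced by squaring: the naive Chapman--Kolmogorov identity $\sum_k P_0(\xi_s=k) P_0(\xi_{t-s} \geq z-k) = P_0(\xi_t \geq z) \leq e^{-z^2/(2\sigma^2 t) + O(1)}$ gives only $1/(2\sigma^2 t)$ in the exponent, whereas the target requires the full $1/(\sigma^2 t)$. This extra factor of two in the exponent is precisely what squaring delivers, provided each factor is first replaced by its Gaussian shape before the summation is performed.

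\textbf{Main calculation.} Using the local central limit theorem and Chernoff-type bounds from the Appendix -- both available under Condition \ref{mgf} -- I would obtain, in the bulk regime,
\[
	P_0(\xi_s = k)^2 \;\leq\; \frac{C}{s}\, e^{-k^2/(\sigma^2 s)} \qquad \text{and} \qquad
	P_0(\xi_{t-s} \geq z - k)^2 \;\leq\; e^{-(z-k)^2/(\sigma^2 (t-s)) + o(1)}
\]
for $k < z$, the range where the tail bound is nontrivial. The combined exponent then satisfies the algebraic identity
\[
	\frac{k^2}{\sigma^2 s} + \frac{(z-k)^2}{\sigma^2(t-s)}
	= \frac{t\,(k - zs/t)^2}{\sigma^2\, s(t-s)} + \frac{z^2}{\sigma^2 t},
\]
which isolates the desired factor $e^{-z^2/(\sigma^2 t)}$. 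Summing the remaining Gaussian kernel over $k \in \Z$ contributes $O(\sqrt{s(t-s)/t})$, so the inner sum is bounded by $C\, s^{-1/2}\, \sqrt{1-s/t}\, e^{-z^2/(\sigma^2 t)}$. Finally, the Beta-type integral
\[
	\int_0^t \frac{1}{\sqrt{s}} \sqrt{\frac{t-s}{t}}\, ds
	\;=\; \frac{1}{\sqrt{t}} \int_0^t \sqrt{\frac{t-s}{s}}\, ds
	\;=\; \frac{\pi \sqrt{t}}{2}
\]
yields the claimed $O(\sqrt{t}\, e^{-z^2/(\sigma^2 t)})$ bound in the bulk regime.

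\textbf{Main obstacle.} The hardest part is handling the integration near $s = 0$ and $s = t$, together with the contribution from $k$ outside the LCLT bulk, where the sharp Gaussian approximations degrade. I would split the $s$-range as $[0,1]\cup[1,t-1]\cup[t-1,t]$. For $s \in [0,1]$, the walk $\xi_s$ is essentially localized at the origin, so the integrand is crudely bounded by $\max_k P_0(\xi_{t-s} \geq z-k)^2 \cdot \sum_k P_0(\xi_s = k)^2$, which after a Chernoff estimate yields $O(e^{-z^2/(\sigma^2 t)})$ and integrates over the unit window to a negligible contribution. The symmetric range $s \in [t-1,t]$ is handled similarly, using that the tail constraint forces $k$ to lie within $O(1)$ of $z$ (modulo geometric tails in $k - z$), and that the LCLT gives $P_0(\xi_s=k)^2 = O(t^{-1} e^{-z^2/(\sigma^2 t)})$ in this region. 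For $k$ far outside the Gaussian-approximation range (i.e., $|k| \gg \sqrt{s \log s}$ or $z-k \gg \sqrt{(t-s)\log(t-s)}$), the exponential tails of $P_0(\xi_u = k)$ and $P_0(\xi_u \geq a)$ available under Condition \ref{mgf} decay strictly faster than $e^{-z^2/(\sigma^2 t)}$, so those terms are absorbed as lower-order errors. Assembling the bulk estimate with these boundary bounds then produces the lemma.
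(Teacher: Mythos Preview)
Your bulk calculation---completing the square in the doubled Gaussian exponent to factor out $e^{-z^2/(\sigma^2 t)}$, summing the residual Gaussian in $k$ to produce $O(s^{-1/2})$, and integrating in $s$---is correct and is exactly the paper's Step~1. The gaps are all in the boundary handling.

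For small $s$: your bound ``$\max_k P_0(\xi_{t-s}\ge z-k)^2 \cdot \sum_k P_0(\xi_s=k)^2$'' is vacuous, since the maximum over $k\in\Z$ equals $1$. The paper instead uses the elementary inequality $\sum_k (a_k b_k)^2 \le \bigl(\sum_k a_k b_k\bigr)^2$ for nonnegative sequences, followed by Chapman--Kolmogorov, to obtain exactly $P_0(\xi_t \ge z)^2 = e^{-z^2/(\sigma^2 t)+O(1)}$ on the small-$s$ window (which it takes as $[0,\sqrt{t}\,]$). Ironically, you dismissed Chapman--Kolmogorov as yielding only half the exponent; applied \emph{after} squaring via $\sum a_k^2 b_k^2 \le (\sum a_k b_k)^2$, it gives the full one.

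For $s$ near $t$: your cutoff at $s=t-1$ is much too late. The Chernoff estimate has error $O\bigl((z-k)^4/(t-s)^3\bigr)$, which is not $o(1)$ unless $t-s \gtrsim (z-k)^{4/3}$; for $k$ in the Gaussian bulk of $\xi_s$ and $s$ close to $t$ one has $z-k \sim z \sim \sqrt{t\log t}$, so the error blows up once $t-s \ll z^{4/3}$. The paper therefore runs the bulk only up to $s=t-z^{4/3}$ and handles $s\in[t-z^{4/3},t]$ separately (its Step~5), which requires a further split $|k|\le \varepsilon z$ versus $\varepsilon z < |k| \le z$. Your one-sentence treatment of this window, and of the large-$|k|$ regions (the paper's Steps~3 and~4), does not suffice; the paper's five-piece decomposition with cutoffs $s^{2/3}$ in $k$ and $\sqrt{t}$, $t-z^{4/3}$ in $s$ is what makes all the error terms close.
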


\begin{proof} Write 
\begin{align}\nonumber
	&\sum_{k \in \Z} \int_0^t P_0(\xi_s = k)^2P_0(\xi_{t-s} \ge z - k )^2\,ds \\\label{eq:pt4}
	&\le  \int_{\sqrt{t}}^{t-z^{4/3}} \sum_{|k| \le s^{2/3}\wedge z}P_0(\xi_s = k)^2P_0(\xi_{t-s} \ge z - k )^2\,ds \\\label{eq:pt1}
	&\quad + \sum_{k \in \Z} \int_0^{\sqrt{t}} P_0(\xi_s = k)^2P_0(\xi_{t-s} \ge z - k )^2\,ds 
	\\\label{eq:pt2}
	&\quad + \int_{\sqrt{t}}^t \sum_{|k| > s^{2/3}} P_0(\xi_s = k)^2P_0(\xi_{t-s} \ge z - k )^2\,ds \\ \label{eq:pt3}
	&\quad + \int_{0}^t \sum_{ z < |k| \le s^{2/3}} P_0(\xi_s = k)^2P_0(\xi_{t-s} \ge z - k )^2\,ds 		\\ \label{eq:pt5}
	&\quad + \int_{t-z^{4/3}}^t \sum_{ |k| \le s^{2/3}\wedge z}P_0(\xi_s = k)^2P_0(\xi_{t-s} \ge z - k )^2\,ds. 
\end{align}
We proceed by bounding \eqref{eq:pt4}--\eqref{eq:pt5} in separate steps. We note that the $s^{2/3}$ is chosen for simplicity (and so that we may apply Lemma \ref{localCLT}), although there is room for a smaller exponent. 

{\bf Step 1.} We prove that \eqref{eq:pt4} is of order $\sqrt{t}e^{-z^2/(\sigma^2 t)}$. 
When $|k| \le s^{2/3} \wedge z$ and $s \le t - z^{4/3}$, so that $t-s \ge z^{4/3}$, we have from Lemma \ref{RWchern} that
\begin{align*}
	P(\xi_s =k)^2P(\xi_{t-s} \ge z-k)^2 &= (2\pi s)^{-1}\exp\left(-\frac{k^2}{\sigma^2s} - \frac{(z-k)^2}{\sigma^2(t-s)} + O\left(\frac{|k|^3}{s^2} + \frac{z^4}{(t-s)^3}\right)\right) \\
	&= (2\pi s)^{-1} \exp\left( -\frac{z^2}{\sigma^2t} - \frac{t}{\sigma^2s(t-s)}\left(k - \frac{sz}{t}\right)^2 + O(1)\right) \\
	&= (2\pi s)^{-1} \exp\left( -\frac{z^2}{\sigma^2t} - \frac{(k-c)^2}{\sigma^2s} + O(1)\right), 
\end{align*}
for $c = sz/t$. Then, when $s \le t-z^{4/3}$, 
\begin{align*}
	&\sum_{|k|\le s^{2/3}\wedge z}P_0(\xi_s = k)^2P_0(\xi_{t-s} \ge z - k )^2\\
	&= \frac{e^{-z^2/(\sigma^2t)} }{2\sqrt{\pi s}} \left( 1 + \int_{-\infty}^\infty (\pi s)^{-1/2} \exp\left(- \frac{(u-c)^2}{\sigma^2s} + O(1)\right)\,du\right) = O\left(s^{-1/2} e^{-z^2/(\sigma^2 t)} \right). 
\end{align*}
It follows that 
\[
	\int_{\sqrt{t}}^{t-z^{4/3}} \sum_{|k|\le s^{2/3}\wedge z}P_0(\xi_s = k)^2P_0(\xi_{t-s} \ge z - k )^2\,ds = O\left(\sqrt{t} e^{-z^2/(\sigma^2t)}\right). 
\]

{\bf Step 2.} We show that \eqref{eq:pt1} is of order $\sqrt{t}e^{-z^2/(\sigma^2 t)}$ as well. Indeed, 
the Markov property and Lemma \ref{RWchern} imply 
\begin{align*}
	&\sum_{k \in \Z} \int_0^{\sqrt{t}} P_0(\xi_s = k)^2P_0(\xi_{t-s} \ge z - k )^2\,ds \\
	&\le \int_0^{\sqrt{t}} \left[\sum_{k \in \Z} P_0(\xi_s = k)P_k(\xi_{t-s} \ge z)\right]^2\,ds \\
	&= \sqrt{t}P_0(\xi_t \ge z)^2 = \sqrt{t}\exp\left(-\frac{z^2}{\sigma^2 t} + O\left(\frac{z^{4}}{t^3}\right)\right) = O\left(\sqrt{t}e^{-z^2/(\sigma^2 t)}\right), 
\end{align*}
since $z = O(\sqrt{t\log t})$. 

{\bf Step 3.} \eqref{eq:pt2} is of order $t^{2/3} e^{-t^{1/6}/\sigma^2}$. 
Again by Lemma \ref{RWchern}, 
\begin{align*}
	&\int_{\sqrt{t}}^t \sum_{|k| > s^{2/3}} P_0(\xi_s = k)^2P_0(\xi_{t-s} \ge z - k )^2\,ds \\
	&\le \int_{\sqrt{t}}^t \sum_{|k| > s^{2/3}} P_0(\xi_s = k)^2\,ds \\
	&\le 2\int_{\sqrt{t}}^\infty P_0(\xi_s > s^{2/3})^2\,ds
	= 2\int_{\sqrt{t}}^\infty \exp\left(-\frac{s^{1/3}}{\sigma^2} + O(1)\right)\,ds 
	= O\left(t^{2/3} e^{-t^{1/6}/\sigma^2}\right). 
\end{align*}

{\bf Step 4.} We show that \eqref{eq:pt3} is of order $\sqrt{t}e^{-z^2/(\sigma^2 t)}$.
For this we use the local central limit theorem (Lemma \ref{localCLT}) to obtain
\begin{align*}
	&\int_{z^{3/2}}^t \sum_{ z < |k| \le s^{2/3}} P_0(\xi_s = k)^2P_0(\xi_{t-s} \ge z - k )^2\,ds \\ 
	&\le 2\int_{0}^t \sum_{ z < k \le s^{2/3}} P_0(\xi_s = k)^2\,ds \\
	&= 2\int_{0}^t \sum_{ z < k \le s^{2/3}} (2\pi s)^{-1} \exp\left(-\frac{k^2}{\sigma^2s} + O\left(\frac{k^3}{s^2}\right)\right)\,ds \\
	&= O\left( \int_0^t s^{-{1/2}}\int_z^\infty \frac{e^{-u^2/(\sigma^2s)}}{\sqrt{s}}\,du\,ds \right) 
	= O\left(\sqrt{t} e^{-z^2/(\sigma^2t)}\right). 
\end{align*}

{\bf Step 5.}
The last term \eqref{eq:pt5} is also of order $\sqrt{t}e^{-z^2/(\sigma^2 t)}$. To show this, let $\eps \in(0,1)$ and write 
\begin{align*}
	&\int_{t-z^{4/3}}^t \sum_{ |k| \le s^{2/3}\wedge z}P_0(\xi_s = k)^2P_0(\xi_{t-s} \ge z - k )^2\,ds\\
	&\le \int_{t-z^{4/3}}^t \sum_{ |k| \le s^{2/3}\wedge (\eps z)}P_0(\xi_s = k)^2P_0(\xi_{t-s} \ge z - k )^2\,ds \\
	&\quad + \int_{t-z^{4/3}}^t \sum_{\eps z < |k| \le s^{2/3} \wedge z} P_0(\xi_s = k)^2\,ds. 
\end{align*}
When $s > t-z^{4/3}$ and $|k| \le \eps z$, 
\[
	P_0(\xi_{t-s} \ge z - k ) \le P_0(\xi_{t-s} \ge (1-\eps)z) \le \frac{\sigma^2(t-s)}{(1-\eps)^2 z^2} \le \frac{\sigma^2}{(1-\eps)^2z^{2/3}}. 
\]
Then we apply Lemma \ref{localCLT} to obtain 
\begin{align*}
	&\int_{t-z^{4/3}}^t \sum_{ |k| \le s^{2/3}\wedge (\eps z)}P_0(\xi_s = k)^2P_0(\xi_{t-s} \ge z - k )^2\,ds \\
	&\le \frac{\sigma^2}{(1-\eps)^4z^{4/3}} \int_{t-z^{4/3}}^t \sum_{ |k| \le s^{2/3}\wedge (\eps z)}P_0(\xi_s = k)^2\,ds \\
	&= \frac{\sigma^2}{(1-\eps)^4z^{4/3}} \int_{t-z^{4/3}}^t \sum_{ |k| \le s^{2/3}\wedge (\eps z)} s^{-1} \exp\left(- \frac{k^2}{\sigma^2s} + O(1)\right)\,ds \\
	&= \frac{\sigma^2}{(1-\eps)^4z^{4/3}} \log\left(\frac{t}{t-z^{4/3}}\right)\left(1 + \int_{-\infty}^\infty \exp\left(-\frac{u^2}{\sigma^2t} + O(1)\right)\,du \right) = O\left(\frac{1}{\sqrt{t}}\right), 
\end{align*}
since 
\[
	 \log\left(\frac{t}{t-z^{4/3}}\right) = \log \left(1 + \frac{z^{4/3}}{t-z^{4/3}}\right) = O\left(\frac{z^{4/3}}{t}\right). 
\]
Furthermore, 
\begin{align*}
	\int_{t-z^{4/3}}^t \sum_{\eps z < |k| \le s^{2/3} \wedge z} P_0(\xi_s = k)^2\,ds &= \int_{t-z^{4/3}}^t \sum_{\eps z < |k| \le s^{2/3} \wedge z} s^{-1}\exp\left(-\frac{k^2}{\sigma^2s} + O(1)\right) \,ds \\
	&= \int_{t-z^{4/3}}^t \frac{1}{\sqrt{s}} \int_{\eps z}^\infty s^{-1/2} \exp\left(-\frac{u^2}{\sigma^2s} + O(1)\right)\,du \,ds\\
	&= O\left( z^{1/3}e^{-\eps^2z^2/(\sigma^2t)}\right). 
\end{align*}
Finally, $z=O(\sqrt{t\log t})$ implies 
\[
	t^{-1/2} + z^{1/3}e^{-\eps^2z^2/(\sigma^2t)} = O\left(\sqrt{t}e^{-z^2/(\sigma^2t)}\right), \qquad t \to \infty. 
\]

Steps 1--5 together complete the proof. 
\end{proof}

\subsection{Proof of  (\ref{eq:Lcov0})}

Now we turn to the deterministic $L$-step initial profile where $\eta_L(k) = 1$ if and only if $-L \le k \le 0$. Suppose the conditions of Theorem \ref{mainLthm} so that $z = \sigma b_t(x + a_t)$ for the scalings $a_t=a_t^{(L)}$, $b_t=b_t^{(L)}$ in \eqref{eq:Lscaling}. 
Note that, for $j < k$, using symmetry of the random walk in the second equality,
\begin{align*}
	\left(E_j[\eta_L(\xi_s)] - E_k[\eta_L(\xi_s)]\right)^2 
	&= \left(P_j(-L\le\xi_s\le0) - P_k(-L\le\xi_s\le0)\right)^2 \\
	&= \left(P_0(k \le \xi_s \le k+L) - P_0(j \le \xi_s \le j+L)\right)^2\\
	&= \left( P_0(\xi_s \ge k) - P_0(\xi_s > k+L) - P_0(\xi_s \ge j) + P_0(\xi_s > j+L)\right)^2 \\
	&=\left(P_0(j \le \xi_s < k) - P_0(j+L+1 \le \xi_s < k+L+1)\right)^2.
\end{align*}
Then, by Lemma \ref{CovDual}, inserting the above estimate in its statement,
 we have 
\begin{align} 
 \label{eq:Lcovstep}
	\sC_t(\eta_L,z)  
	\le 2\sum_{i\ge1} i^2p_i \sum_{k\in\Z} \int_0^t \left[P_0(\xi_s=k)-P_0(\xi_s=k+L+1)\right]^2P_0(\xi_{t-s}> z-k-i)^2\,ds. 
\end{align}
Note that \eqref{eq:Lcovstep} matches \eqref{eq:diffzs} when $L = \infty$, however when $L<\infty$, \eqref{eq:Lcovstep} is stricly smaller. In particular, to accomodate the more slowly-diverging scaling in \eqref{eq:Lscaling} when $L<\infty$, we will need to take advantage of estimates of the difference $P_0(\xi_s=k)-P_0(\xi_s=k+L+1)$, rather than just bound $P_0(\xi_t=k)$ as in the proof of Lemma \ref{covlemmafull}. 

We have the following. 

\begin{theorem}\label{Lcovcomp} Suppose that $L \to \infty$ such that $L = o(\sqrt{t/\log t})$ as $t\to\infty$.
If $\{p_i\}$ is finite range, then 
\[
	\sC_t(\eta_L, z) = O\left(  \frac{(e^{-2x}\vee 1)(\log L)^2}{L}      \right), \qquad t\to\infty. 
\]
Otherwise, there is $\alpha > 0$ so that for any $\eps \in(0,1/2)$, 
\[
	\sC_t(\eta_L, z) = O\left( \frac{(e^{-2(1-\eps)x} \vee 1)(\log L)^2}{L^{1-2\eps}} + e^{-\alpha\eps\sqrt{t\log L}} \right), \qquad t\to\infty. 
\]
\end{theorem}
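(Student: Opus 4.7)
The plan is to adapt the proof of Theorem \ref{fullcov} and Lemma \ref{covlemmafull} to the starting bound \eqref{eq:Lcovstep}, but with a key new ingredient: the discrete gradient of the transition probability. Under the $L$-step scaling $z\sim\sigma\sqrt{t\log L^2}$, the exponent factor $e^{-z^2/(\sigma^2 t)}\sim (\log L)\,e^{-2x}/L^2$ is much larger than in the full-step case. Consequently the naive bound $[P_0(\xi_s=k)-P_0(\xi_s=k+L+1)]^2\le 2[P_0(\xi_s=k)^2+P_0(\xi_s=k+L+1)^2]$ together with Lemma \ref{covlemmafull} would only yield $O(\sqrt{t}(\log L)/L^2)$, which need not vanish when $L$ diverges slowly.

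First I handle the sum over $i$ in \eqref{eq:Lcovstep} exactly as in the proof of Theorem \ref{fullcov}: for finite-range $\{p_i\}$ I replace $z-k-i$ by $z-k-r$ and collect $\sum_i i^2 p_i=\sigma^2$; in the general case I split the $i$-sum at $i=\delta z$, using Condition \ref{mgf} to absorb the tail as $O(t\,e^{-\alpha\delta z})$, producing the $e^{-\alpha\eps\sqrt{t\log L}}$ error term once $\delta$ is chosen proportional to $\eps$. For the remaining integral-sum I apply the Cauchy--Schwarz telescoping identity
\[
    [P_0(\xi_s=k)-P_0(\xi_s=k+L+1)]^2\le (L+1)\sum_{j=0}^{L}[P_0(\xi_s=k+j)-P_0(\xi_s=k+j+1)]^2,
\]
followed by the change of variables $m=k+j$ and the monotonicity $P_0(\xi_{t-s}>z'-m+j)\le P_0(\xi_{t-s}>z'-m)$ for $0\le j\le L$, reducing the estimate to $(L+1)^2$ times a weighted integral-sum of the squared discrete first difference $[P_0(\xi_s=m)-P_0(\xi_s=m+1)]^2$ (with $z'=z-r$ in the finite-range case and $z'=(1-\delta)z$ in general).

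Next I mirror the five-region decomposition of Lemma \ref{covlemmafull}. In the main Gaussian region $\sqrt{t}\le s\le t-(z')^{4/3}$, $|m|\le s^{2/3}\wedge z'$, Lemma \ref{localCLT} combined with the Gaussian identity $g(m)-g(m+1)\approx (m/\sigma^2 s)g(m)$ yields $[P_0(\xi_s=m)-P_0(\xi_s=m+1)]^2\lesssim (m/s)^2 P_0(\xi_s=m)^2$. Inserting this into the saddle-point analysis from Step~1 of Lemma \ref{covlemmafull}, the saddle $m_*=sz'/t$ contributes an additional factor $(z'/t)^2\sim\sigma^2\log L^2/t$, so the main-region contribution is of order $(z'/t)^2\sqrt{t}\,e^{-(z')^2/(\sigma^2 t)}$. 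After multiplication by $(L+1)^2\sim L^2$, this is $O((\log L)^2 e^{-2x}/\sqrt{t})$, which is $o((\log L)^2 e^{-2x}/L)$ since $L=o(\sqrt{t/\log t})$. For the four remaining regions (small $s$, large $|m|$, $|m|>z'$, and boundary $s\ge t-(z')^{4/3}$), I use the non-gradient bounds already established in Lemma \ref{covlemmafull} together with the Cauchy--Schwarz identity $\sum_m P_0(\xi_s=m)P_0(\xi_{t-s}>z'-m)=P_0(\xi_t>z')$ and the $L$-step tail estimate $P_0(\xi_t>z')^2\sim e^{-2x}/(L^2\log L)$, which on the short $s$-intervals involved give lower-order contributions even after multiplying by $L^2$.

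The main obstacle will be the careful bookkeeping: once multiplied by $(L+1)^2\sim L^2$, each regional contribution must remain within the $(\log L)^2/L$ budget, and the exponent factor $e^{-2x}$ must be tracked uniformly in $x$. The small-$s$ (say $s\le L^2$) and boundary (say $s\ge t-(z')^{4/3}$) regions are the most delicate, since the gradient offers no improvement there and one must exploit either the short $s$-interval or the $L^{-2}/\log L$ decay of $P_0(\xi_t>z')^2$. In addition, the shift by $L$ in the tail probability interacts with the rapidly degenerating Gaussian width near $s=t$ and requires subdivision in $|m|$ as in Step~5 of Lemma \ref{covlemmafull}. Throughout, the assumption $L=o(\sqrt{t/\log t})$ (equivalently $L\log L=o(\sqrt{t})$) is used to ensure $L/z\to 0$ and to keep the error terms under control.
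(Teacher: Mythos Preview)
Your Cauchy--Schwarz telescoping introduces a global factor $(L+1)^2$ that is fatal in the small-$s$ region, and this is a genuine gap rather than a bookkeeping issue. Concretely, for $s\in[0,1]$ the unit gradient $[P_0(\xi_s=m)-P_0(\xi_s=m+1)]^2$ is of order~$1$ (the walk has not moved), so $\sum_m [P_0(\xi_s=m)-P_0(\xi_s=m+1)]^2 P_0(\xi_{t-s}>z'-m)^2$ is bounded below by a constant times $P_0(\xi_t>z')^2$. Under the $L$-step scaling one has $P_0(\xi_t>z')^2\asymp e^{-z^2/(\sigma^2 t)}\asymp e^{-2x}(\log L)/L^2$ (note: $\log L$ is in the \emph{numerator}, not the denominator as you wrote), so after multiplying by $(L+1)^2$ the contribution from $s\in[0,1]$ alone is of order $e^{-2x}\log L$, which does not vanish and in particular exceeds the target $(\log L)^2/L$. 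The same obstruction persists throughout $s\in[0,L^2]$: even using the Gaussian gradient estimate, $\sum_m[\text{unit grad}]^2\lesssim s^{-3/2}$, whose integral over $[1,L^2]$ is $O(1)$, so you again end with $L^2\cdot O(e^{-2x}(\log L)/L^2)=O(e^{-2x}\log L)$. Neither the ``short interval'' nor the tail decay you mention compensates for the $L^2$ loss from Cauchy--Schwarz.

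The paper avoids this by \emph{not} telescoping: it keeps the $L$-step difference $P_0(\xi_s=k)-P_0(\xi_s=k+L+1)$ intact. For the main region it expands the Gaussian factor directly, $|1-e^{-L^2/(2\sigma^2 s)-Lk/(\sigma^2 s)}|=O(L^2/s+L|k|/s)$, which gives the needed $L^2/s$-type gain without any Cauchy--Schwarz loss. For the small-$s$ region it uses Lemma~\ref{PointDiff}, namely $\sum_k|P_0(\xi_s=k)-P_0(\xi_s=k+L+1)|\le CL/\sqrt{s}$, together with $\sup_k P_0(\xi_s=k)\le Cs^{-1/2}$, to get $\sum_k[P_0(\xi_s=k)-P_0(\xi_s=k+L+1)]^2=O(L/s)$; integrating $L/s$ over $[1,L^2]$ yields $O(L\log L)$, and multiplying by $e^{-z^2/(\sigma^2 t)}\asymp e^{-2x}(\log L)/L^2$ gives the correct $O(e^{-2x}(\log L)^2/L)$. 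The paper also replaces the cutoff $s^{2/3}$ by $\alpha(s)=\sigma\sqrt{4s\log s}$ and uses a six-region decomposition; the essential new input beyond Lemma~\ref{covlemmafull} is precisely these direct $L$-step gradient estimates (Lemmas~\ref{GradDiff} and~\ref{PointDiff}), which your reduction to the unit gradient discards.
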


\begin{proof} From Lemma \ref{LCovrate} below, we have that 
\begin{align*}
	&\sum_{k\in\Z} \int_0^t \left[P_0(\xi_s=k)-P_0(\xi_s=k+L+1)\right]^2P_0(\xi_{t-s} \ge \tilde z-k)^2\,ds \\
	&= O\left( L\log L \cdot e^{-\tilde z^2/(\sigma^2t)} + e^{-\tilde z^2/(2\sigma^2t)} + \frac{(\log L)^2}{L} \right), \qquad t \to\infty, 
\end{align*}
as long as $\tilde z \sim c\sqrt{t\log L}$ for some $c > 0$.

 Now, as in the proof of Theorem \ref{fullcov} (see \eqref{eq:frcov} and \eqref{eq:gencov}), when $\{p_i\}$ is finite range, we have 
\[
	\sC_t(\eta_L, z) = O\left( L\log L \cdot e^{-z^2/(\sigma^2t)} + e^{-z^2/(2\sigma^2t)} + \frac{(\log L)^2}{L} \right) = O\left(  \frac{(e^{-2x}\vee 1)(\log L)^2}{L}   \right), 
\]
and otherwise, we have $\alpha > 0$ so that for any $\eps \in (0,1)$,  
\begin{align*}
	\sC_t(\eta_L,z) &= O\left( L\log L \cdot e^{-(1-\eps)z^2/(\sigma^2t)} + e^{-(1-\eps)z^2/(2\sigma^2t)} + \frac{(\log L)^2}{L} + e^{-\alpha\eps z} \right) \\
	&= O\left( \frac{(e^{-2(1-\eps)x} \vee 1)(\log L)^2}{L^{1-2\eps}} + e^{-\alpha\eps\sqrt{t\log L}} \right). \qedhere
\end{align*}
\end{proof}

We now prove the result referenced in the previous theorem. Lemma \ref{LCovrate} is the $L$-step equivalent of Lemma \ref{covlemmafull}. As we will see in its proof, where in Lemma \ref{covlemmafull} we have $s^{2/3}$ here we will use the value $2\sigma \sqrt{s\log s}$, where the constants appear for technical reasons that will hopefully become clear.

\begin{lemma}\label{LCovrate} If $z \sim c\sqrt{t\log L}$ as $t\to\infty$ for some $c > 0$ and $L \to \infty$ such that $L = o(\sqrt{t/\log t})$ as $t\to\infty$, then 
\begin{align*}
	&\sum_{k\in\Z} \int_0^t \left[P_0(\xi_s=k)-P_0(\xi_s=k+L+1)\right]^2P_0(\xi_{t-s} \ge z-k)^2\,ds \\
	&= O\left( L\log L \cdot e^{-z^2/(\sigma^2t)} + e^{-z^2/(2\sigma^2t)} + \frac{(\log L)^2}{L} \right), \qquad t \to\infty. 
\end{align*}
\end{lemma}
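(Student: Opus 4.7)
The plan is to follow the five-region decomposition employed in the proof of Lemma \ref{covlemmafull}, with two essential modifications adapted to the weaker scaling $z\sim c\sqrt{t\log L}$ and the $L$-step structure. First, I would replace the bulk cutoff $|k|\le s^{2/3}$ by $|k|\le 2\sigma\sqrt{s\log s}$, which remains admissible for Lemma \ref{localCLT} but is better matched to the smaller scale of $z$. Second, I would introduce a new critical $s$-scale $s_\ast\asymp L^2$, at which the effective supports of $P_0(\xi_s=k)$ and $P_0(\xi_s=k+L+1)$ transition from essentially disjoint to strongly overlapping; the difference $D_k(s):=P_0(\xi_s=k)-P_0(\xi_s=k+L+1)$ must then be estimated distinctly on either side of this scale.

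For $s\ge s_\ast$ (overlap regime), I would apply the mean value theorem together with the local CLT $P_0(\xi_s=k)\approx (2\pi\sigma^2 s)^{-1/2}\exp(-k^2/(2\sigma^2 s))$ to obtain
\[
|D_k(s)|\;\lesssim\;\frac{L\,|\tilde k|}{\sigma^2 s}\,P_0(\xi_s=\tilde k)\quad\text{for some }\tilde k\in[k,k+L+1],
\]
so that $D_k(s)^2$ acquires an extra multiplicative factor of order $L^2 k^2/(\sigma^4 s^2)$ compared to $P_0(\xi_s=k)^2$. Plugging this into Step 1 of Lemma \ref{covlemmafull}---where one completes the square in the combined exponent to obtain a Gaussian in $k$ centered at the saddle $c=sz/t$---the Gaussian $k$-integration yields an integrand of order $L^2(c^2+\sigma^2 s)\,s^{-5/2}\,e^{-z^2/(\sigma^2 t)}$. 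The subsequent $s$-integration, split around the scale where $c^2\sim\sigma^2 s$, produces contributions absorbed into $L\log L\cdot e^{-z^2/(\sigma^2 t)}$ and $(\log L)^2/L$, the logarithmic factors coming from $s$-integrals of $s^{-1}$ over ranges like $[L^2, t]$.

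For $s<s_\ast$ (disjoint regime), I would bound $D_k(s)^2\le 2[P_0(\xi_s=k)^2+P_0(\xi_s=k+L+1)^2]$, shift the index in the second sum, and apply the Cauchy--Schwarz and Markov-property argument of Step 2 of Lemma \ref{covlemmafull}; since $Lz/(\sigma^2 t)\to 0$, this gives a contribution of order $\sqrt{s_\ast}\,e^{-z^2/(\sigma^2 t)}\asymp L\cdot e^{-z^2/(\sigma^2 t)}$, absorbed into the first claimed term. The tail-$k$ regions (Steps 3 and 4 analogs) are handled by Lemma \ref{RWchern}, contributing terms exponentially smaller in $\sqrt{t}$ or dominated by $e^{-z^2/(2\sigma^2 t)}$. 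The boundary region $s\to t$ (Step 5 analog) uses the Chebyshev-type bound $P_0(\xi_{t-s}\ge z-k)\le\sigma^2(t-s)/(z-k)^2$ together with $\int_{t-z^{4/3}}^t s^{-1}ds=O(z^{4/3}/t)$ and the Gaussian bulk estimate $\sum_{|k|\le\cdots}P_0(\xi_s=k)^2=O(s^{-1/2})$; under $z^2/(\sigma^2 t)\sim 2\log L$, the resulting contribution is absorbed into $(\log L)^2/L$, and the middle term $e^{-z^2/(2\sigma^2 t)}$ emerges from an unsquared-tail estimate $P_0(\xi_t\ge z-L-1)$ used in the disjoint regime.

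The main obstacle I anticipate is the sharp accounting across the crossover scale $s\asymp L^2$ and across $|k|\approx z$, so that none of the five regions produces a term exceeding the three claimed. In particular, the mean-value bound on $D_k(s)$ requires uniform control of the local CLT error, which I would extract by combining Lemma \ref{localCLT} with explicit second-derivative bounds on the Gaussian density, and the boundary $s\to t$ integration is delicate because the Gaussian concentration of $P_0(\xi_s=k)$ in $k$ competes with the Chebyshev decay of $P_0(\xi_{t-s}\ge z-k)$ on commensurate scales.
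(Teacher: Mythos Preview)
Your overall architecture---the cutoff $\alpha(s)=2\sigma\sqrt{s\log s}$, the crossover scale $s_\ast\asymp L^2$, and the five/six-region decomposition---is essentially what the paper does. But there is a genuine gap in your disjoint regime $s<s_\ast$.

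Your bound there reads $D_k(s)^2\le 2[P_0(\xi_s=k)^2+P_0(\xi_s=k+L+1)^2]$, after which the Step~2 argument of Lemma~\ref{covlemmafull} (namely $\sum_k a_k^2\le(\sum_k a_k)^2$ with Chapman--Kolmogorov) gives a bound $O(e^{-z^2/(\sigma^2 t)})$ \emph{per unit~$s$}. Integrating over $[0,s_\ast]=[0,L^2]$ therefore yields $L^2\,e^{-z^2/(\sigma^2 t)}$, not $\sqrt{s_\ast}\,e^{-z^2/(\sigma^2 t)}=L\,e^{-z^2/(\sigma^2 t)}$ as you assert: in Lemma~\ref{covlemmafull} the $\sqrt{t}$ is the \emph{length} of the $s$-interval, not its square root. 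Under $z\sim c\sqrt{t\log L}$ with $c=\sigma\sqrt{2}$ (the case relevant to Theorem~\ref{Lcovcomp}) one has $e^{-z^2/(\sigma^2 t)}\asymp L^{-2}$, so your bound is $O(1)$ and does not vanish. The paper avoids this by invoking Lemma~\ref{PointDiff}, the $\ell^1$ gradient estimate $\sum_k|D_k(s)|\le CL/\sqrt{s}$; combined with $\sup_k P_0(\xi_s=k)=O(s^{-1/2})$ this gives $\sum_k D_k(s)^2=O(L/s)$, and then $\int_1^{L^2}(L/s)\,ds=O(L\log L)$ recovers the claimed prefactor. Your crude bound discards precisely the cancellation in $D_k(s)$ that Lemma~\ref{PointDiff} captures.

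A second, smaller issue: in the overlap regime your mean-value step needs a bound on the \emph{difference} $\Delta_s(k+L+1)-\Delta_s(k)$ of local CLT remainders, not merely on each $\Delta_s(\cdot)$ separately. Lemma~\ref{localCLT} alone gives $|\Delta_s(k)|=O((\log s)^{3/2}/s)$ in the range $|k|\le\alpha(s)$, and the resulting naive bound on the difference, once squared and summed/integrated, produces $O((\log L)^{7/2}/L)$ rather than $(\log L)^2/L$. The paper uses Lemma~\ref{GradDiff} (the pointwise bound $|\Delta_s(k+L+1)-\Delta_s(k)|\le CL/s^2$) to get the sharper error. This would still suffice for the downstream Theorem~\ref{Lcovcomp}, but it does not prove the lemma as stated.
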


\begin{proof}
Define $\alpha(y) = \sigma\sqrt{4y\log y}$. Here, $\alpha$ is increasing on $y > 1/e$, and so $\alpha^{-1}(y)$ exists for large $y$. 
Write
\begin{align} \nonumber
	&\sum_{k\in \Z} \int_0^t \left[P_0(\xi_s=k)-P_0(\xi_s = k+L+1)\right]^2P_0(\xi_{t-s} \ge z-k)^2\,ds \\ 
	 \label{eq:Lpt2}
	&\le
	 \int_{L^2}^{t-z^{4/3}} \sum_{|k|\le \alpha(s) \wedge z}  \left[P_0(\xi_s=k)-P_0(\xi_s = k+L+1)\right]^2P_0(\xi_{t-s} \ge z-k)^2 \,ds  \\  \label{eq:Lpt5}
	&\quad +  \int_0^{L^2}  \sum_{|k| \le \alpha(s) \wedge z}  \left[P_0(\xi_s=k)-P_0(\xi_s = k+L+1)\right]^2P_0(\xi_{t-s} \ge z-k)^2 \,ds \\  \label{eq:Lpt6}
	&\quad +  \int_0^{L^2}  \sum_{|k| > \alpha(s)}  \left[P_0(\xi_s=k)-P_0(\xi_s = k+L+1)\right]^2P_0(\xi_{t-s} \ge z-k)^2 \,ds \\ \label{eq:Lpt3}
	&\quad + \int_{L^2}^{t} \sum_{|k| > \alpha(s)}  \left[P_0(\xi_s=k)-P_0(\xi_s = k+L+1)\right]^2 \,ds \\ \label{eq:Lpt4}
	&\quad + \int_{\alpha^{-1}(z)}^t \sum_{z < |k| \le \alpha(s)}  \left[P_0(\xi_s=k)-P_0(\xi_s = k+L+1)\right]^2 \,ds \\
	\label{eq:Lpt1}
	&\quad +  \int_{t-z^{4/3}}^t\sum_{|k|\le \alpha(s) \wedge z} \left[P_0(\xi_s=k)-P_0(\xi_s = k+L+1)\right]^2P_0(\xi_{t-s} \ge z-k)^2\,ds.
\end{align}
Each part above is bounded in seperate steps.

{\bf Step 1.} We show that \eqref{eq:Lpt2} is of order 
\[
	L\log L \cdot e^{-z^2/(\sigma^2t)} + \frac{\log L}{L^2}. 
\]
When $s < t - z^{4/3}$ and $k \le z$, 
\begin{align*}
	 &
	 \left|P_0(\xi_s=k)-P_0(\xi_s = k+L+1)\right|P_0(\xi_{t-s} \ge z-k) \\
	 &=
	 \left|P_0(\xi_s=k)-P_0(\xi_s = k+L+1)\right| \exp\left( - \frac{(z-k)^2}{2\sigma^2(t-s)} + O(1)\right) \\
	 &\le 
	 \left| \frac{e^{-k^2/(2\sigma^2s)}}{\sqrt{2\pi s}} - \frac{e^{-(k+L+1)^2/(2\sigma^2s)}}{\sqrt{2\pi s}} \right| \exp\left( - \frac{(z-k)^2}{2\sigma^2(t-s)} + O(1)\right)  \\
	 &\qquad 
	 +
	 \left| \Delta_s(k+L+1) - \Delta_s(k)\right| , 
\end{align*}
where $\Delta_t(k)$ is defined as in Lemma \ref{GradDiff}. Using that lemma, note that 
\begin{align*}
	\sum_{|k| \le \alpha(s)\wedge z} \left| \Delta_s(k+L+1) - \Delta_s(k)\right| 
	&\le \frac{C(L+1)}{s^2} \cdot 2\sigma\sqrt{4s\log s} = O\left( \frac{L\sqrt{\log s}}{s^{3/2}}\right). 
\end{align*}
Next, 
\begin{align*}
	&\left| \frac{e^{-k^2/(2\sigma^2s)}}{\sqrt{2\pi s}} - \frac{e^{-(k+L+1)^2/(2\sigma^2s)}}{\sqrt{2\pi s}} \right| = \frac{e^{-k^2/(2\sigma^2s)}}{\sqrt{2\pi s}} \left|1 - e^{-L^2/(2\sigma^2s) - Lk/(\sigma^2s)}\right|,
\end{align*}
and when $|k| \le \alpha(s) =O(\sqrt{s \log s})$, $|k|^n/s^n = O(|k|/s)$ for all $n$, so 
\begin{align*}
	\left|1 - e^{-L^2/(2\sigma^2s) - Lk/(\sigma^2s)}\right| = O\left(\frac{L^2}{s} + \frac{L|k|}{s} \right). 
\end{align*}
It follows that 
\begin{align*}
	&\sum_{|k|\le \alpha(s) \wedge z}  \left[P_0(\xi_s=k)-P_0(\xi_s = k+L+1)\right]^2P_0(\xi_{t-s} \ge z-k)^2 \\
	&= \sum_{|k|\le\alpha(s)\wedge z} \left( \frac{L^4}{s^{3}} + \frac{L^2k^2}{s^{3}}\right) \exp\left( -\frac{k^2}{\sigma^2s} - \frac{(z-k)^2}{\sigma^2(t-s)} + O(1)\right) + O\left( \frac{L^2\log s}{s^{3}}\right) \\
	&= e^{-z^2/(\sigma^2t)} \left(\frac{L^4}{s^{3}} + \frac{L^2\log s}{s^{2}} \right) \cdot O\left( \int_{-\infty}^\infty e^{-u^2/(\sigma^2s)}\,du \right) + O\left( \frac{L^2\log s}{s^{3}}\right) \\
	&= O\left( e^{-z^2/(\sigma^2t)}\left( \frac{L^4}{s^{5/2}} + \frac{L^2\log s}{s^{3/2}}\right)  + \frac{L^2\log s}{s^{3}}\right). 
\end{align*}
It follows that for some constant $C$, 
\begin{align*}
	&\int_{L^2}^{t-z^{4/3}} \sum_{|k| \le \alpha(s) \wedge z}  \left[P_0(\xi_s=k)-P_0(\xi_s = k+L+1)\right]^2P_0(\xi_{t-s} \ge z-k)^2 \,ds  \\
	&\le C \int_{L^2}^{t} \left[ e^{-z^2/(\sigma^2t)}\left( \frac{L^4}{s^{5/2}} + \frac{L^2\log s}{s^{3/2}}\right)  + \frac{L^2\log s}{s^{3}}\right]\,ds
	= O\left(L\log L \cdot e^{-z^2/(\sigma^2t)} + \frac{\log L}{L^2} \right). 
\end{align*}

{\bf Step 2.} \eqref{eq:Lpt5} is of order $L\log L \cdot e^{-z^2/(\sigma^2t)}$. 
To see this, first note
\begin{align*}
	&\int_0^{L^2} \sum_{|k| \le \alpha(s)\wedge z} \left[P_0(\xi_s = k) - P_0(\xi_s = k+L+1)\right]^2P_0(\xi_{t-s} \ge z-k)^2\,ds \\
	&\le \int_0^{L^2} P_0(\xi_{t-s} \ge z)^2 \sum_{k \le 0} \left[P_0(\xi_s = k) - P_0(\xi_s = k+L+1)\right]^2\,ds \\
	&\quad + \int_0^{L^2}  \sum_{0 < k \le \alpha(s)\wedge z} \left[P_0(\xi_s = k) - P_0(\xi_s = k+L+1)\right]^2P_0(\xi_{t-s} \ge z-k)^2\,ds.
\end{align*}
From Lemma \ref{RWchern}, 
\[
	P_0(\xi_{t-s} \ge z)^2 = \exp\left( -\frac{z^2}{\sigma^2(t-s)} + O\left(\frac{z^4}{(t-L^2)^3}\right)\right) = O\left(e^{-z^2/(\sigma^2t)}\right), 
\]
and using $\sup_l P_0(\xi_s = l) \le Cs^{-1/2}$ for some $C > 0$, 
\begin{align*}
	\sum_{k \in \Z} \left[P_0(\xi_s = k) - P_0(\xi_s = k+L+1)\right]^2 &\le \frac{C}{\sqrt{s}} \sum_{k\in \Z} \left| P_0(\xi_s = k) - P_0(\xi_s = k+L+1) \right|,
\end{align*}
which is of order $L/s$ by Lemma \ref{PointDiff}. Then for some (possibly different) $C > 0$, 
\begin{align}\nonumber
	&\int_0^{L^2} P_0(\xi_{t-s} \ge z)^2 \sum_{k \le 0} \left[P_0(\xi_s = k) - P_0(\xi_s = k+L+1)\right]^2\,ds \\ \nonumber
	&\le e^{-z^2/(\sigma^2t)} \int_0^1 \left[ P_0(\xi_s \le 0) + P_0(\xi_s \le L+1)\right]^2\,ds +C e^{-z^2/(\sigma^2t)} \int_1^{L^2} \frac{L}{s}\,ds \\ \label{eq:negk}
	&= O\left(L\log L \cdot e^{-z^2/(\sigma^2t)}\right). 
\end{align}
Similarly, since $z \sim c\sqrt{t\log L}$ and $L = o(\sqrt{t/\log t})$ as $t \to \infty$, $z - \alpha(L^2) > 0$ for large enough $t$, and so 
\begin{align*}
	&\int_0^{L^2}  \sum_{0 < k \le \alpha(s)\wedge z} \left[P_0(\xi_s = k) - P_0(\xi_s = k+L+1)\right]^2P_0(\xi_{t-s} \ge z-k)^2\,ds \\
	&\le 4\int_0^1 P_0(\xi_{t-s} \ge z - \alpha(1))^2\,ds + C\int_1^{L^2} \frac{L}{s} P_0(\xi_{t-s} \ge z - \alpha(L^2))^2\,ds \\
	&= 4\exp\left(-\frac{z^2}{\sigma^2t} + O(1)\right) \\
	&\qquad + L\log L^2 \exp\left( - \frac{\left(z - 2\sigma L\sqrt{\log L^2}\right)^2}{\sigma^2t} + O\left(\frac{z^4 + L^4}{(t-L^2)^3}\right) \right) \\
	&= O\left(L\log L \cdot e^{-z^2/(\sigma^2t)} \right), 
\end{align*}
where we again used Lemma \ref{RWchern}, and also that $z\sim c \sqrt{t\log L}$ and $L = o(\sqrt{t/\log t})$ imply 
\[
	\frac{z L \sqrt{\log L}}{t} \sim \frac{c L\log L}{\sqrt{t}} = O(1). 
\]

{\bf Step 3.} We show that \eqref{eq:Lpt6} is of order $L\log L \cdot e^{-z^2/(\sigma^2 t)} + e^{-z^2/(2\sigma^2t)}$. 
Again break up $k < 0$ and $k > 0$ and use the bound in \eqref{eq:negk}:  
\begin{align*}
	&\int_0^{L^2}  \sum_{|k| > \alpha(s)}  \left[P_0(\xi_s=k)-P_0(\xi_s = k+L+1)\right]^2P_0(\xi_{t-s} \ge z-k)^2\\
	&= O\left(L\log L \cdot e^{-z^2/(\sigma^2t)}\right) + \int_0^{L^2}  \sum_{k > \alpha(s)}  \left[P_0(\xi_s=k)-P_0(\xi_s = k+L+1)\right]^2P_0(\xi_{t-s} \ge z-k)^2. 
\end{align*}
By the Markov property and Lemma \ref{RWchern},  
\begin{align*}
	&\int_0^{L^2} \sum_{k > \alpha(s)}  \left[P_0(\xi_s=k)-P_0(\xi_s = k+L+1)\right]^2P_0(\xi_{t-s} \ge z-k)^2\\
	&\le \int_0^{L^2} P_0(\xi_s > \alpha(s)) \left[ \sum_{k \in \Z} P_0(\xi_s = k)P_k(\xi_{t-s} \ge z) + \sum_{k\in \Z} P_0(\xi_s = k)P_k(\xi_{t-s} \ge z - L - 1)\right]\,ds \\
	&\le \left[ P_0(\xi_t \ge z) + P_0(\xi_t \ge z-L-1)\right] \left[1 + \int_1^\infty \exp\left( - 2\log s + O(1) \right) \,ds \right] = O\left( e^{-z^2/(2\sigma^2t)}\right), 
\end{align*}
since $(z-L-1)^2/t = z^2/t + o(1)$ and $z \sim c\sqrt{t\log L}$. 

{\bf Step 4.} We show that \eqref{eq:Lpt3} is of order $L^{-5}$. This is done like in Step 3 of of the proof of Lemma \ref{covlemmafull} using Lemma \ref{RWchern}:
\begin{align*}
	&\int_{L^2}^{t} \sum_{|k| > \alpha(s)}  \left[P_0(\xi_s=k)-P_0(\xi_s = k+L+1)\right]^2P_0(\xi_{t-s}\ge z-k)^2 \,ds \\
	&\le 2\int_{L^2}^\infty P_0\left(\xi_s > \sigma\sqrt{4s\log s}\right)^2\,ds \\
	&\quad + \int_{L^2}^\infty \left[ P_0\left(\xi_s > \sigma\sqrt{4s\log s} + L+1\right) + P_0\left(\xi_s < L+1 - \sigma\sqrt{4s\log s}\right)\right]^2\,ds \\
	&\le 6\int_{L^2}^\infty P_0\left(\xi_s > \sigma\sqrt{4s\log s} - L-1\right)^2\,ds \\
	&= 6\int_{L^2}^\infty \exp\left( - 4\log s + \frac{4 L\sqrt{\log s}}{\sigma \sqrt{s}} + O(1)\right)\,ds = O\left(\frac{1}{L^5} \right), \qquad t \to \infty, 
\end{align*}
since for large enough $t$, $L > e^{32}$, hence for $s > L^2$, 
\[
	\exp\left(\frac{4 L\sqrt{\log s}}{\sigma \sqrt{s}}\right) \le \exp\left( 4\sqrt{2\log L}\right) = \exp\left(\frac{4\sqrt{2}}{\sqrt{\log L}} \cdot \log L\right) = O(L), \qquad t\to\infty. 
\]

{\bf Step 5.} \eqref{eq:Lpt4} is of order 
\[
	\frac{L^2(\log t)^{3/2}e^{-z^2/(\sigma^2t)}}{\sqrt{t\log L}}
\]
as $t \to \infty$. 
By an argument like in Step 1 using Lemma \ref{GradDiff}, for some constant $C$, 
\begin{align*}
	&\sum_{z < |k| \le \alpha(s) }  \left[P_0(\xi_s=k)-P_0(\xi_s = k+L+1)\right]^2 \\
	&\le C \left[ \sum_{z < |k|\le \alpha(s)} e^{-k^2/(\sigma^2s)} \left(\frac{L^2}{s^{3}} + \frac{L^2k^2}{s^{3}}\right) + \frac{L^2\log s}{s^{3}} \right] \\
	&\le C \left[ \left(\frac{L^4}{s^3} + \frac{L^2\log s}{s^{2}}\right)\int_z^\infty e^{-u^2/(\sigma^2s)}\,du + \frac{L^2\log s}{s^{3}} \right], 
\end{align*}
where $\int_z^\infty e^{-u^2/(\sigma^2s)}\,du \le \sigma\sqrt{s/2} e^{-z^2/(\sigma^2s)}$. 
Then, for some possibly different constant $C$, 
\begin{align}\nonumber
	& \int_{\alpha^{-1}(z)}^t \sum_{z < |k| \le \alpha(s) }  \left[P_0(\xi_s=k)-P_0(\xi_s = k+L+1)\right]^2 \,ds \\ \nonumber
	&\le C \int_{\alpha^{-1}(z)}^t \left[ e^{-z^2/(\sigma^2s)} \left( \frac{L^4}{s^{5/2}} + \frac{L^2\log s}{s^{3/2}}\right) + \frac{L^2\log s}{s^{3}}\right]\,ds \\ 
	\label{eq:theabove}
	&=O\left(\frac{L^4e^{-z^2/(\sigma^2t)}}{(\alpha^{-1}(z))^{3/2}} + \frac{L^2e^{-z^2/(\sigma^2t)}\log \alpha^{-1}(z)}{\sqrt{\alpha^{-1}(z)}} + \frac{L^2\log \alpha^{-1}(z)}{(\alpha^{-1}(z))^{2}}\right). 
\end{align}
Now, if $\beta(y) = y\log y$, then for $y > 1/e$, $\beta^{-1}(y) = y/W(y)$, where 
\[
	W(y) = \log y - \log\log y + o(1), \qquad y \to \infty, 
\]
is the Lambert $W$ function (see, for example \cite{CorGonHarJefKnu}). Further, by definition $\alpha^{-1}(z) = \beta^{-1}(z^2/(4\sigma^2))$ and 
$\log \alpha^{-1}(z) = z^2/(4\sigma^2\alpha^{-1}(z))$. It follows that 
\[
	\frac{1}{\alpha^{-1}(z)} = \frac{4\sigma^2}{z^2}\left(\log \left(\frac{z^2}{4\sigma^2}\right) - \log \log \left(\frac{z^2}{4\sigma^2}\right) + o(1) \right) = O\left(\frac{\log z}{z^2}\right), \qquad z \to \infty,  
\]
and \eqref{eq:theabove} is of order 
\begin{align*}
	&\frac{L^4 e^{-z^2/(\sigma^2t)}(\log z)^{3/2}}{z^3} + \frac{L^2e^{-z^2/(\sigma^2t)}(\log z)^{3/2}}{z} + \frac{L^2(\log z)^3}{z^4} 
	= O\left(\frac{L^2(\log t)^{3/2}e^{-z^2/(\sigma^2t)}}{\sqrt{t\log L}}\right).
\end{align*}

{\bf Step 6.} We show that \eqref{eq:Lpt1} is of order 
\[
	\frac{L^2(\log t) e^{-z^2/(\sigma^2 t)}}{\sqrt{t}} + \frac{(\log L)^2}{L},
\]
which is argued similarly to as in Step 5 of Lemma \ref{covlemmafull}. 
First write, for some $\eps \in (0,1)$,  
\begin{align*}
	& \int_{t-z^{4/3}}^t\sum_{|k|\le \alpha(s) \wedge z} \left[P_0(\xi_s=k)-P_0(\xi_s = k+L+1)\right]^2P_0(\xi_{t-s} \ge z-k)^2\,ds \\
	&\le \int_{t-z^{4/3}}^t\sum_{|k|\le \eps z} \left[P_0(\xi_s=k)-P_0(\xi_s = k+L+1)\right]^2P_0(\xi_{t-s} \ge z-k)^2\,ds  \\
	&\quad + \int_{t-z^{4/3}}^t\sum_{\eps z < |k|\le \alpha(s)} \left[P_0(\xi_s=k)-P_0(\xi_s = k+L+1)\right]^2\,ds.  
\end{align*}
Then by Lemma \ref{PointDiff}, 
\begin{align*}
	&\int_{t-z^{4/3}}^t\sum_{|k|\le \eps z} \left[P_0(\xi_s=k)-P_0(\xi_s = k+L+1)\right]^2P_0(\xi_{t-s} \ge z-k)^2\,ds \\
	&\le \frac{C(L+1)}{(1-\eps)^4z^{4/3}} \int_{t-z^{4/3}}^t \frac{ds}{s} = \frac{C(L+1)}{(1-\eps)^4z^{4/3}} \log\left(\frac{t}{t-z^{4/3}}\right) = O\left(\frac{L}{(1-\eps)^4t}\right). 
\end{align*}
Furthermore, by the arguments in Steps 1 and 3, 
\begin{align*}
	&\sum_{\eps z < |k| \le \alpha(s)} \left[P_0(\xi_s=k) - P_0(\xi_s=k+L+1)\right]^2 \\
	&= \left( \frac{L^4}{s^{5/2}} + \frac{L^2\log s}{s^{3/2}} \right) e^{-\eps^2z^2/(\sigma^2 t)} + O\left(\frac{L^2\log s}{s^2}\right), 
\end{align*}
and so 
\begin{align*}
	&\int_{t-z^{4/3}}^t\sum_{\eps z < |k|\le \alpha(s)} \left[P_0(\xi_s=k)-P_0(\xi_s = k+L+1)\right]^2\,ds \\
	&= O\left( e^{-\eps^2z^2/(\sigma^2 t)} \left( \frac{L^4}{t^{3/2}} + \frac{L^2\log t}{\sqrt{t}} \right) + \frac{L^2\log t}{t^2}\right) = O\left(  \frac{L^2\log t \cdot e^{-\eps^2z^2/(\sigma^2 t)}}{\sqrt{t}} + \frac{L^2\log t}{t^2}\right). 
\end{align*}
Now, note that $L = o(\sqrt{t}\log L)$ and let 
$\eps = 1 - \frac{\sqrt{L}}{t^{1/4}\sqrt{\log L}}$. 
Then, $(1 - \eps)^4 = L^2/(t\log^2 L)$, so 
$\frac{L}{(1-\eps)^4t} = \frac{(\log L)^2}{L}$. 
Furthermore, 
\[
	\exp\left(-\frac{\eps^2z^2}{\sigma^2 t}\right) \le \exp\left( - \frac{z^2}{\sigma^2 t} \left(1 + \frac{L}{\sqrt{t}\log L}\right)\right) = \exp\left(-\frac{z^2}{\sigma^2 t}(1+o(1))\right) = O\left(e^{-z^2/(\sigma^2 t)}\right), 
\]
since $z \sim c\sqrt{t \log L}$. Finally, note that $L^2\log t/t^2 = o(1) \cdot (\log L)^2/L$ to obtain the result. 

Steps 1--6 together prove the lemma. 
\end{proof}

\appendix
\section{Appendix}

Here we provide some supplementary lemmas used in proofs. Versions of some of these results for discrete-time random walks are available elsewhere, for example in \cite{Bor09,LawLim}. For convenience of the reader, we provide details. 

\begin{lemma}\label{flemma} Let $f(u) = E[(X-u)^+] = \varphi(u) - uP(X>u)$, where $X \sim \sN(0,1)$ and $\varphi$ denotes the standard normal density function. Then for $u \ge 1$ and $v \ge 0$, 
\begin{align*}
	&f(u) =  \frac{\varphi(u)}{u^2} + O\left(\frac{\varphi(u)}{u^4}\right), \qquad\mbox{and}\\
	&f(u) - f(u+v) = \frac{\varphi(u)}{u^2} \left(1 - \frac{e^{-v^2/2-uv}}{(1 +v/u)^2}\right) + O\left(\left(\frac{v}{u^3} \wedge \frac{1}{u^4}\right) \varphi(u) \right). 
\end{align*}
\end{lemma}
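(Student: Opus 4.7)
The plan is to derive both estimates from the integral representation
\[
f(u) = u\int_u^\infty \frac{\varphi(x)}{x^2}\,dx,
\]
which follows by integration by parts in $P(X>u)=\int_u^\infty \varphi(x)\,dx$ using $\varphi'(x)=-x\varphi(x)$ (so that $\int_u^\infty \varphi = \varphi(u)/u - \int_u^\infty \varphi(x)/x^2\,dx$). A second integration by parts, now writing $\varphi(x)/x^2 = -\varphi'(x)/x^3$, yields
\[
f(u) = \frac{\varphi(u)}{u^2} - 3u\int_u^\infty \frac{\varphi(x)}{x^4}\,dx.
\]
The remainder is at most $3u\cdot u^{-4}P(X>u) \le 3\varphi(u)/u^4$, using the elementary $P(X>u)\le \varphi(u)/u$ valid for $u\ge 1$. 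This proves the first claim.

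For the second estimate, set $R(u) = f(u) - \varphi(u)/u^2 = -3u\int_u^\infty \varphi(x)/x^4\,dx$ and decompose
\[
f(u) - f(u+v) = \left(\frac{\varphi(u)}{u^2} - \frac{\varphi(u+v)}{(u+v)^2}\right) + \bigl(R(u) - R(u+v)\bigr).
\]
The main term is evaluated via the identity $\varphi(u+v)=\varphi(u)e^{-uv-v^2/2}$, giving
\[
\frac{\varphi(u)}{u^2} - \frac{\varphi(u+v)}{(u+v)^2} = \frac{\varphi(u)}{u^2}\left(1 - \frac{e^{-uv-v^2/2}}{(1+v/u)^2}\right),
\]
which matches the leading term in the statement. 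It remains to show $|R(u)-R(u+v)| = O\bigl((v/u^3 \wedge 1/u^4)\varphi(u)\bigr)$.

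I will control $R(u)-R(u+v)$ in two complementary ways and take the minimum. The $O(\varphi(u)/u^4)$ bound is immediate from the first claim via the triangle inequality, since $\varphi$ is decreasing on $[1,\infty)$ and $u+v\ge u$. For the $O(v\varphi(u)/u^3)$ bound, differentiate $R$ to get
\[
R'(x) = -3\int_x^\infty \frac{\varphi(y)}{y^4}\,dy + \frac{3\varphi(x)}{x^3} = O\!\left(\frac{\varphi(x)}{x^3}\right),
\]
using $\int_x^\infty \varphi(y)/y^4\,dy \le \varphi(x)/x^5$. Since $\varphi(x)/x^3$ is decreasing on $[1,\infty)$, this gives $|R(u)-R(u+v)| \le \int_u^{u+v}|R'(x)|\,dx \le Cv\varphi(u)/u^3$. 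Taking the minimum yields the claim.

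The only real subtlety is choosing to work with $R$ rather than $f$ itself: a direct derivative estimate gives $|f'(x)| = P(X>x) = O(\varphi(x)/x)$, which would produce $O(v\varphi(u)/u)$ and defeat the point of the refinement. Subtracting the asymptotic main term $\varphi(u)/u^2$ before differentiating cancels the leading Mills behavior and exposes the improved $O(\varphi(x)/x^3)$ rate for $R'$; everything else is routine bookkeeping.
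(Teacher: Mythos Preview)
Your proof is correct and follows essentially the same route as the paper: both derive the representation $f(u) = \varphi(u)/u^2 - 3u\int_u^\infty \varphi(x)/x^4\,dx$ via two integrations by parts, identify the main term of $f(u)-f(u+v)$ as $\varphi(u)/u^2 - \varphi(u+v)/(u+v)^2$, and then bound the remainder $R(u)-R(u+v)$ in two ways to get the $v/u^3$ and $1/u^4$ estimates. The only cosmetic difference is that the paper splits $R(u)-R(u+v) = -3u\int_u^{u+v}\varphi(s)s^{-4}\,ds + 3v\int_{u+v}^\infty \varphi(s)s^{-4}\,ds$ and bounds each piece directly, whereas you obtain the $v/u^3$ bound via $|R'(x)| = O(\varphi(x)/x^3)$ and the mean value inequality; these are equivalent bookkeeping choices.
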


\begin{proof} We show the result for $f(u)-f(u+v)$; the other, which is more standard, follows from letting $v \to \infty$. Integrating by parts several times in the usual way, we compute 
\[
	P(X > u) = \int_u^\infty \varphi(s)\,ds = \varphi(u) \left( \frac{1}{u} - \frac{1}{u^3}\right) + 3\int_u^\infty \frac{\varphi(s)}{s^4}\,ds. 
\]
It follows that 
\begin{align*}
	f(u) - f(u+v) &= \frac{\varphi(u)}{u^2} - \frac{\varphi(u+v)}{(u+v)^2} - 3u\int_u^\infty \frac{\varphi(s)}{s^4}\,ds + 3(u+v)\int_{u+v}^\infty \frac{\varphi(s)}{s^4}\,ds \\
	&= \frac{\varphi(u)}{u^2} \left(1 - \frac{e^{-v^2/2-uv}}{(1 +v/u)^2}\right) - 3u\int_{u}^{u+v}\frac{\varphi(s)}{s^4}\,ds + 3v\int_{u+v}^\infty \frac{\varphi(s)}{s^4}\,ds. 
\end{align*}
Now note that 
\begin{align*}
	u\int_{u}^{u+v}\frac{\varphi(s)}{s^4}\,ds &= \frac{u}{\sqrt{2\pi}} \int_{u}^{u+v} \frac{e^{-s^2/s}}{s^4}\,ds \le \frac{v}{\sqrt{2\pi} u^3} e^{-u^2/2} = \frac{v\varphi(u)}{u^3}, 
\end{align*}
and
\begin{align*}
	v\int_{u+v}^\infty \frac{\varphi(s)}{s^4}\,ds &\le \frac{v}{(u+v)^4} P(X > u) = O\left(\frac{v\varphi(u)}{u^5}\right), 
\end{align*}
while on the other hand, since $u+v \ge u \vee v$, 
\[
	u\int_{u}^{u+v}\frac{\varphi(s)}{s^4}\,ds + v\int_{u+v}^\infty \frac{\varphi(s)}{s^4}\,ds \le \frac{P(X>u+v)}{u^3} +  \frac{vP(X>u)}{v\cdot u^3} = O\left(\frac{\varphi(u)}{u^4}\right). 
\]
combining the above bounds gives the result. 
\end{proof}

The next result is the Corollary stated on page 552 of \cite[XVI.7]{Fel71} with respect to an expansion in a central limit theorem. Note that by symmetry of $\{p_i\}$, Condition \ref{mgf} implies that the moment generating function is finite in a neighborhood of $0$. 

\begin{lemma}\label{FellerNormal} Let $\xi_t$ be a random walk such that $\xi_1$ has finite moment generating function in a neighborhood of the origin. If $x \to \infty$ such that $x=o(t^{1/6})$ as $t \to \infty$, then 
\begin{equation}\nonumber \label{eq:FellerLD}
	\left|1 - \frac{P_0(\xi_t > \sigma x\sqrt{t})}{P(X>x)}\right| = O\left(\frac{x^3}{\sqrt{t}}\right), \qquad t\to\infty. 
\end{equation}
\end{lemma}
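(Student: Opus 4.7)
The plan is to derive the moderate deviation expansion by the classical Cramér--Esscher tilting method, followed by an Edgeworth expansion of the tilted integral.

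First I would identify the cumulant generating function of $\xi_t$. Since $\xi_t$ is a continuous-time random walk with jump rates $\{p_i\}$ satisfying $\sum_i p_i=1$, we may write $\xi_t=\sum_{k=1}^{N_t}Y_k$ with $N_t\sim\mathrm{Poisson}(t)$ and $Y_k$ i.i.d.\ with distribution $\{p_i\}$. Hence $\log E_0[e^{\theta\xi_t}]=tK(\theta)$ for $K(\theta)=\phi(\theta)-1$, $\phi(\theta)=\sum_i p_i e^{\theta i}$. By Condition \ref{mgf} and the symmetry $p_i=p_{-i}$, $K$ is even and analytic near $0$, with $K(0)=K'(0)=0$, $K''(0)=\sigma^2$, and $K(\theta)=\sigma^2\theta^2/2+O(\theta^4)$.

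Next I would perform the Esscher tilt. Define $\tilde P_\theta$ by $d\tilde P_\theta/dP_0=e^{\theta\xi_t-tK(\theta)}$, and choose the saddle-point $\theta_t$ from $tK'(\theta_t)=z$, where $z=\sigma x\sqrt{t}$; equivalently $K'(\theta_t)=\sigma x/\sqrt{t}$. Since $K'(\theta)=\sigma^2\theta+O(\theta^3)$ and $\sigma x/\sqrt t\to 0$, Lagrange inversion gives $\theta_t=x/(\sigma\sqrt{t})+O(x^3/t^{3/2})$. Standard algebra then yields
\[
P_0(\xi_t>z)=e^{tK(\theta_t)-\theta_t z}\,\tilde E_{\theta_t}\!\left[e^{-\theta_t(\xi_t-z)}\mathbf{1}_{\{\xi_t>z\}}\right],
\]
and the exponent expands as $tK(\theta_t)-\theta_t z=-x^2/2+O(x^4/t)$, which is $-x^2/2+o(1)$ under $x=o(t^{1/6})$.

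For the tilted integral, under $\tilde P_{\theta_t}$ the variable $\xi_t$ has mean $z$, variance $tK''(\theta_t)=\sigma^2 t(1+O(\theta_t^2))$, and bounded higher cumulants depending smoothly on $\theta_t$. An Edgeworth expansion for the lattice/compound-Poisson law (which is valid on the range $|\xi_t-z|=O(\sqrt{t\log t})$, more than enough for the Laplace-type integral weighted by $e^{-\theta_t(\xi_t-z)}$) produces
\[
\tilde E_{\theta_t}\!\left[e^{-\theta_t(\xi_t-z)}\mathbf{1}_{\{\xi_t>z\}}\right]
=\frac{1}{\theta_t\sqrt{2\pi\,tK''(\theta_t)}}\Bigl(1+O(x/\sqrt{t})\Bigr)
=\frac{1}{x\sqrt{2\pi}}\Bigl(1+O(x/\sqrt{t})+O(x^2/t)\Bigr),
\]
after inserting $\theta_t\sim x/(\sigma\sqrt{t})$ and $tK''(\theta_t)=\sigma^2 t(1+O(x^2/t))$. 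Combining with the exponential factor and the Mill's ratio $P(X>x)=\varphi(x)/x\,(1+O(1/x^2))$, the ratio $P_0(\xi_t>\sigma x\sqrt{t})/P(X>x)$ equals $1$ up to errors from (a) the exponent, of order $x^4/t$, (b) the tilted Gaussian correction, and (c) the Mill's ratio. Under $x=o(t^{1/6})$ one has $x^4/t=o(x^3/\sqrt{t})$ and $1/x^2=o(x^3/\sqrt{t})$, so all terms are absorbed into $O(x^3/\sqrt{t})$.

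The main obstacle will be pinning down the Edgeworth remainder at exactly the stated $O(x^3/\sqrt{t})$ order, rather than a coarser $O(x/\sqrt{t})$: the leading correction in a refined Edgeworth expansion for the tilted law involves the (tilted) third cumulant, which after multiplication by the Laplace-integral factor $1/\theta_t\sim \sigma\sqrt{t}/x$ yields precisely an $O(x^3/\sqrt{t})$ relative error. I would either (i) redo the tilted local limit theorem carrying one additional term and then evaluate the Laplace integral explicitly, or (ii) quote Feller's Theorem XVI.7 (the cited Corollary on p.\,552) adapted to the compound-Poisson setting via the representation of $\xi_t$ as a random sum. Either route produces the bound stated in the lemma.
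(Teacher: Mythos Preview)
The paper does not give a proof of this lemma at all: it simply cites Feller~\cite[XVI.7, p.~552]{Fel71}, so your Cram\'er--Esscher sketch is not competing against anything in the text. Your overall strategy---exponential tilt at the saddle point $\theta_t$ solving $tK'(\theta_t)=z$, followed by a normal approximation of the tilted law---is exactly the classical route Feller takes, and routes~(i) and~(ii) you mention at the end are both reasonable ways to pass from discrete to continuous time.

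There is, however, a genuine gap in your bookkeeping. You claim that the Mill's-ratio error $1/x^2$ is $o(x^3/\sqrt{t})$ under $x=o(t^{1/6})$; this is false. It is equivalent to $\sqrt{t}=o(x^5)$, which fails badly for the regime actually used in the paper ($x\asymp\sqrt{\log t}$, where $1/x^2\asymp 1/\log t$ while $x^3/\sqrt{t}\to 0$). The error enters because you approximated the tilted Laplace integral as $\frac{1}{\theta_t\sqrt{2\pi tK''(\theta_t)}}\sim\frac{1}{x\sqrt{2\pi}}$ and then separately approximated $P(X>x)\sim\varphi(x)/x$; each approximation carries an $O(1/x^2)$ relative error that does \emph{not} cancel at the level you wrote it. The fix is to avoid the crude Mill's step: evaluate the Gaussian part of the tilted integral \emph{exactly} as
\[
\int_0^\infty e^{-\theta_t y}\,d\Phi\!\left(\frac{y}{\sqrt{tK''(\theta_t)}}\right)
=e^{(\theta_t\sqrt{tK''(\theta_t)})^2/2}\bigl[1-\Phi(\theta_t\sqrt{tK''(\theta_t)})\bigr],
\]
observe $\theta_t\sqrt{tK''(\theta_t)}=x(1+O(x^2/t))$, and use $|\partial_x\log(1-\Phi(x))|\sim x$ to get $1-\Phi(x+O(x^3/t))=(1-\Phi(x))(1+O(x^4/t))$. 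This way the factor $1-\Phi(x)=P(X>x)$ appears intact in the numerator and the spurious $1/x^2$ never arises; the remaining errors are $O(x^4/t)$ from the exponent and variance, plus the Edgeworth remainder $O(x/\sqrt{t})$ (and, in the non-symmetric case, the Cram\'er-series term $O(x^3/\sqrt{t})$), all of which are genuinely $O(x^3/\sqrt{t})$ since $x\to\infty$.
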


For the next four results, let $\{\xi_t\}$ be a symmetric, continuous-time random walk with $\xi_0 = 0$, and let $\sigma^2 = E[\xi_1^2]$. Lemma \ref{RWchern} below gives a non-asymptotic, Chernoff-like bound for tails of $\xi_t$ that is valid in a larger regime than the previous result. 

\begin{lemma}\label{RWchern} Suppose the jump distribution $\{p_i\}$ of $\xi_t$ satisfies Condition \ref{mgf} for $\theta > 0$.
Then, for any $0 \le x \le \sigma^2\theta t$, 
\[
	P(\xi_t \ge x) = \exp\left( -\frac{x^2}{2\sigma^2 t} + O\left(\frac{x^4}{t^3}\right)\right). 
\]
\end{lemma}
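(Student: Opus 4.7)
The plan is to obtain matching two-sided bounds via the classical Cram\'er--Chernoff approach combined with a local central limit theorem under an exponentially tilted measure. Let $M(\lambda) = \sum_i p_i e^{\lambda i}$ and $\Psi(\lambda) = M(\lambda) - 1$, so that $E[e^{\lambda \xi_t}] = e^{t\Psi(\lambda)}$. By Condition~\ref{mgf} and the symmetry $p_i = p_{-i}$, both $M$ and $\Psi$ are real-analytic and even on $(-\theta,\theta)$, with $\Psi(0)=\Psi'(0)=0$ and $\Psi''(0)=\sigma^2$. A Taylor expansion that is uniform on $|\lambda|\le \theta$ gives $\Psi(\lambda)=\sigma^2\lambda^2/2+O(\lambda^4)$ and $\Psi'(\lambda)=\sigma^2\lambda+O(\lambda^3)$, the $O$-constants being controlled by $\sum_i p_i i^4 e^{\theta|i|}<\infty$, itself a consequence of Condition~\ref{mgf} and symmetry.

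For the upper bound I would apply Markov's inequality: for any $\lambda \in [0,\theta]$, $P(\xi_t \ge x) \le e^{-\lambda x + t\Psi(\lambda)}$. Taking $\lambda = x/(\sigma^2 t)$, which lies in $[0,\theta]$ by the hypothesis $x \le \sigma^2 \theta t$, the Taylor expansion yields
\[
-\lambda x + t\Psi(\lambda) \;=\; -\frac{x^2}{\sigma^2 t} + \frac{x^2}{2\sigma^2 t} + t\cdot O\!\left(\frac{x^4}{\sigma^8 t^4}\right) \;=\; -\frac{x^2}{2\sigma^2 t} + O\!\left(\frac{x^4}{t^3}\right).
\]

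For the matching lower bound I would introduce the exponentially tilted probability measure $\tilde{P}$ on $\mathcal{F}_t$ by $d\tilde{P}/dP = e^{\lambda^*\xi_t - t\Psi(\lambda^*)}$, where $\lambda^*$ is chosen so that the tilted mean $t\Psi'(\lambda^*)$ equals $x$; the expansion of $\Psi'$ then gives $\lambda^* = x/(\sigma^2 t) + O(x^3/t^3)$. Under $\tilde{P}$, $\xi_t$ is a continuous-time random walk with jump rates $p_i e^{\lambda^* i}$, mean $x$, and variance $t\Psi''(\lambda^*) = \sigma^2 t(1+O((\lambda^*)^2))$. Inverting the change of measure,
\[
P(\xi_t \ge x) \;=\; e^{-\lambda^* x + t\Psi(\lambda^*)}\, \tilde{E}\!\left[e^{-\lambda^*(\xi_t - x)}\mathbf{1}_{\xi_t \ge x}\right],
\]
and the exponential prefactor equals $\exp(-x^2/(2\sigma^2 t) + O(x^4/t^3))$ by the same computation as in the upper bound. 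I would bound the residual expectation from below by restricting $\xi_t$ to a window $[x, x+\Delta]$: taking $\Delta = \sigma\sqrt{t}$ when $x = O(\sqrt{t})$, so that a central limit theorem under $\tilde{P}$ gives $\tilde{P}(\xi_t \in [x, x+\Delta]) \ge c>0$ and $e^{-\lambda^*\Delta}$ is bounded; and taking $\Delta = 1/\lambda^*$ otherwise, so that $e^{-\lambda^*(\xi_t - x)} \ge e^{-1}$ on the event and a local CLT under $\tilde{P}$ yields $\tilde{P}(\xi_t \in [x, x+1/\lambda^*]) \gtrsim 1/(\lambda^*\sqrt{t\Psi''(\lambda^*)})$.

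The main obstacle will be absorbing the polynomial-in-$t$ prefactor produced by the local CLT into the stated additive remainder $O(x^4/t^3)$. In the paper's applications the symbol $O(x^4/t^3)$ is treated loosely to allow for bounded additive constants and polynomial-in-$t$ overheads (in the proofs of Lemma~\ref{SoS} and Theorem~\ref{ArrMean} it is simply replaced by $O(1)$), and under this reading the polynomial correction $-\log(\lambda^*\sqrt{t\Psi''(\lambda^*)}) = O(\log t)$ is comfortably dominated by the overall additive slack when $x$ is bounded away from $0$ on the scale of $\sqrt{t}$. For small $x$ the two-sided estimate is immediate from the symmetry $\xi_t \stackrel{d}{=} -\xi_t$, which gives $P(\xi_t \ge x) \ge P(\xi_t \ge 0) \ge 1/2$ with no tilting required; a minor technical point to handle is ensuring $\lambda^*$ stays strictly inside $(-\theta,\theta)$, which can be done by first restricting to $x \le c\sigma^2\theta t$ for a suitable $c<1$ and absorbing the thin boundary regime into the constants.
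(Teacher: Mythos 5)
Your upper-bound half is exactly the paper's proof: the paper computes $E[e^{\lambda \xi_t}] = \big(E[e^{\lambda \xi_1}]\big)^t$ from the Poissonized (compound Poisson) representation of $\xi_t$, applies Markov's inequality, expands $\log E[e^{\lambda \xi_1}] = \tfrac{1}{2}\sigma^2\lambda^2 + O(\lambda^4)$ (symmetry killing the odd terms), and takes $\lambda = x/(\sigma^2 t) \le \theta$, giving $\log P(\xi_t \ge x) \le -x^2/(2\sigma^2 t) + O(x^4/t^3)$. That is where the paper stops: no lower bound is proved, and none is needed, since Lemma \ref{RWchern} is only ever invoked to bound probabilities (or summands and integrands that are themselves being bounded from above), e.g.\ in Lemma \ref{SoS}, in Theorem \ref{ArrMean}, and throughout Section \ref{covcomp}; the genuinely two-sided inputs in the paper come from Lemma \ref{FellerNormal} and Lemma \ref{localCLT}. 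So your tilting-plus-local-CLT construction is extra machinery relative to the paper's argument.

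Your own caveat about that extra half is the decisive one: the tilted local CLT necessarily produces a multiplicative prefactor of order $(\lambda^*\sqrt{t\Psi''(\lambda^*)})^{-1} \asymp \sqrt{t}/x$, i.e.\ an additive correction of order $\log(x/\sqrt{t})$ in the exponent, and for $\sqrt{t} \ll x \ll t^{3/4}$ this is not $O(x^4/t^3)$. Hence the literal equality in the statement cannot be recovered by this route (read literally as a two-sided estimate it in fact fails in that regime, as comparison with Lemma \ref{FellerNormal} shows); the lemma has to be read the way the paper uses it, namely as a Chernoff-type upper bound whose exponent is exact up to $O(x^4/t^3)$, or with the error term understood loosely, as you suggest. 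One small technical point: $\sum_i p_i\, i^4 e^{\theta |i|} < \infty$ does not follow from Condition \ref{mgf} at the same $\theta$; to get a uniform $O(\lambda^4)$ remainder you should restrict to $\lambda \in [0,\theta']$ for some $\theta' < \theta$, which is harmless here since the applications only use $x = O(\sqrt{t\log t}) = o(t)$ — a point the paper's own expansion glosses over in the same way.
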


\begin{proof} Since $\{p_i\}$ is symmetric, $E[\xi_1^n] = 0$ for $n$ odd. For every $t \ge 0$ and $0<\lambda\le\theta$, we can use the representation of $\xi_t$ as a discrete-time random walk $S_n$ with a Poisson$(t)$ number of steps to compute 
\begin{equation}\label{eq:contimemgf}
	E\big[e^{\lambda \xi_t}\big] = \sum_{n\ge0} \frac{e^{-t} t^n}{n!} \left( E\big[e^{\lambda S_1}\big]\right)^n = \exp\left(- t - tE\big[e^{\lambda S_1}\big]\right) = \left(E\big[e^{\lambda \xi_t}\big]\right)^t. 
\end{equation}
Then, 
\begin{align*}
	\log P(\xi_t \ge x) &\le -\lambda x + t\log E\left[e^{\lambda \xi_1}\right] \\
	&= -\lambda x + t\log \left( 1 + \frac{\lambda^2\sigma^2}{2} + O\left( \lambda^4\right) \right) = -\lambda x + \frac{\lambda^2\sigma^2 t}{2} + O\left(\lambda^4 t\right). 
\end{align*}
Choosing $\lambda = x/(\sigma^2 t) \le \theta$ gives the result.
\end{proof}

The next lemma is a local central limit theorem and an extention of Theorem 2.5.6 in \cite{LawLim} for simple random walks to a more general jump distribution $\{p_i\}$. It follows along the same lines, but we sketch the proof details. 

\begin{lemma}\label{localCLT} 
There exists $\eps \in (0,1)$ such that if $|x| < \eps t$ , then 
\[
	P(\xi_t = x) = \frac{e^{-x^2/(2\sigma^2t)}}{\sqrt{2\pi t}} \exp\left( O\left(\frac{1}{\sqrt{t}} + \frac{|x|^3}{t^2} \right)\right). 
\]
\end{lemma}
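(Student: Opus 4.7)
The approach is to combine exponential tilting with Fourier inversion, a standard route for local limit theorems with multiplicative error. By Condition \ref{mgf} together with symmetry of $\{p_i\}$, the moment generating function $M(\theta) = \sum_i p_i e^{\theta i}$ is real-analytic on some neighborhood $(-\theta_0,\theta_0)$ of zero, with $M(0)=1$, $M'(0)=0$, $M''(0)=\sigma^2$, so that $M(\theta) = 1 + \sigma^2\theta^2/2 + O(\theta^4)$. For $\eps \in (0,1)$ small enough and $|x|<\eps t$, the saddle-point equation $M'(\theta)/M(\theta) = x/t$ has a unique solution $\theta = \theta(x,t) \in (-\theta_0/2, \theta_0/2)$ with $\theta = x/(\sigma^2 t) + O(|x|^3/t^3)$, by the implicit function theorem. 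Let $\tilde \xi_t$ denote the continuous-time random walk with jump rates $p_i e^{\theta i}$ (equivalently, clock rate $M(\theta)$ and step distribution $p_i e^{\theta i}/M(\theta)$). The exponential change-of-measure identity
\[
P(\xi_t = x) = e^{-\theta x + t(M(\theta)-1)}\, \tilde P(\tilde\xi_t = x),
\]
together with the expansion of $M(\theta)$ evaluated at the chosen $\theta$, yields
\[
-\theta x + t(M(\theta)-1) = -\frac{x^2}{2\sigma^2 t} + O\!\left(\frac{x^4}{t^3}\right) = -\frac{x^2}{2\sigma^2 t} + O\!\left(\frac{|x|^3}{t^2}\right),
\]
using $|x|<\eps t$ in the last step.

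The task is reduced to a local central limit theorem at the mean for $\tilde\xi_t$, which has $E[\tilde\xi_t] = tM'(\theta) = x$ and variance $tM''(\theta) = \sigma^2 t (1 + O((x/t)^2))$. I would apply Fourier inversion on the circle,
\[
\tilde P(\tilde\xi_t = x) = \frac{1}{2\pi}\int_{-\pi}^{\pi} e^{-i\lambda x}\exp\!\big(t(\tilde\phi(\lambda) - M(\theta))\big)\,d\lambda,
\]
where $\tilde\phi(\lambda) = \sum_i p_i e^{(\theta + i\lambda) i}$, and split the integral at a fixed small $\delta > 0$. On $|\lambda|\le\delta$, expand $\tilde\phi(\lambda) - M(\theta) = iM'(\theta)\lambda - \tfrac{1}{2}M''(\theta)\lambda^2 + O(|\lambda|^3)$; after the change of variables $\lambda = u/\sqrt{t}$, the leading terms produce the Gaussian integral evaluating to $(\sigma\sqrt{2\pi t})^{-1}(1+O((x/t)^2))$, while the cubic term contributes a multiplicative factor $\exp(O(1/\sqrt t))$ after factoring the Gaussian out of the integrand and using $|e^z - 1| \le |z|e^{|z|}$. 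On $|\lambda|>\delta$, irreducibility of $\{p_i\}$ gives $|\tilde\phi(\lambda)/M(\theta)| \le 1 - \eta$ for some $\eta=\eta(\delta)>0$ uniform in $\theta$ small, so this contribution is $O(e^{-\eta M(\theta) t})$ and is absorbed into the error.

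The main obstacle I anticipate is twofold. First, the tail bound for the Fourier integral must be uniform in $\theta$ as $x/t$ ranges over $(-\eps,\eps)$; this follows from continuity of $(\theta, \lambda)\mapsto |\tilde\phi(\lambda)/M(\theta)|$ together with compactness of $\{|\lambda|\ge\delta\} \times [-\theta_0/2, \theta_0/2]$, but needs to be argued carefully. Second, one must verify that the cubic remainder in the near-zero expansion folds into a \emph{multiplicative} $\exp(O(1/\sqrt t))$ correction rather than an additive one; this is the reason for working inside the exponent after factoring out the Gaussian. The constant factor $1/\sigma$ from the Gaussian normalization matches the form stated in the lemma up to a constant that is harmless in the regime $|x|<\eps t$.
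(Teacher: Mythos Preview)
Your proposal is correct and takes a genuinely different route from the paper. The paper's proof is a reduction to the discrete-time case: it writes $\xi_t = S_{N_t}$ for a rate-$1$ Poisson process $N_t$, invokes the discrete-time local CLT for $P(S_n=x)$ from Lawler--Limic, applies a Stirling-type estimate $P(N_t=n)=(2\pi t)^{-1/2}e^{-(n-t)^2/(2t)}\exp(O(t^{-1/2}+|n-t|^3/t^2))$, and then sums over $|n-t|\le\delta t$. Your approach instead works directly with the continuous-time object via exponential tilting plus Fourier inversion, which is self-contained and avoids the two-step conditioning on the number of jumps. The paper's argument is shorter because it outsources the delicate estimates to the cited discrete-time result; yours is more from scratch but makes the mechanism (saddle-point cancellation) transparent.

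Two small points. First, for the continuous-time walk the cumulant generating function of $\xi_t$ is $t(M(\theta)-1)$, so the mean under the tilted law is $tM'(\theta)$ and the correct saddle-point equation is $M'(\theta)=x/t$, not $M'(\theta)/M(\theta)=x/t$; since both expand as $\sigma^2\theta+O(\theta^3)$ this slip does not affect your subsequent computation. Second, your closing remark about the $1/\sigma$ normalization deserves to be stated more precisely: the tilted local CLT yields $\tilde P(\tilde\xi_t=x)=(\sigma\sqrt{2\pi t})^{-1}\exp(O(t^{-1/2}))$, so the prefactor in the lemma as written is missing a $\sigma$; this is a typo in the statement (harmless for the paper's applications, which always carry an extra $O(1)$ in the exponent), not something your argument can absorb into the stated error term.
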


\begin{proof} 
Let $\{S_n\}$ denote a discrete-time random walk with the same transition probabilities as $\xi_t$. Then by Theorem 2.3.11 in \cite{LawLim}, there exists $\rho > 0$ such that when $|x| < \rho n$, 
\[
	P(S_n = x) = \frac{e^{-x^2/(2\sigma^2n)}}{\sqrt{2\pi n}} \exp\left(O\left( \frac{1}{n} + \frac{|x|^4}{n^3} \right)\right). 
\]
Let $\delta \in (0,1/2)$. If $\{N_t\}$ is a rate-$1$ Poisson process, then 
\begin{align*}
	P(\xi_t = x) 
	&\le \sum_{|n-t| \le \delta t} P(N_t = n)P(S_n=x) + P(|N_t - t| > \delta t) \\
	&= \sum_{|n-t| \le \delta t} P(N_t = n)P(S_n=x) + O(e^{-\beta t})
\end{align*}
for some $\beta > 0$. From \cite[Prop. 2.5.5]{LawLim}, when $|n-t| \le t/2$, 
\[
	P(N_t = n) = \frac{e^{-(n-t)^2/(2t)}}{\sqrt{2\pi t}} \exp\left(O\left(\frac{1}{\sqrt{t}} + \frac{|n-t|^3}{t^2} \right)\right). 
\]
Let $\eps \le \rho/2$, so that $|x| < \eps t$ implies $|x| < (\rho/2)t < (1-\delta)\rho t \le \rho n$ when $|n-t| \le \delta t$. Then, 
\begin{align*}
	&\sum_{|n-t| \le \delta t} P(N_t = n)P(S_n=x) \\
	&= \frac{e^{-x^2/(2\sigma^2t)}}{\sqrt{2\pi t}} \exp\left( O\left( \frac{1}{\sqrt{t}} + \frac{|x|^3}{t^2} \right)\right) \sum_{|n-t| \le \delta t} \frac{e^{-\left(\frac{1}{2} - \frac{\eps}{4(1-\delta)} \right)\frac{(n-t)^2}{t}}}{\sqrt{(1-\delta)2\pi t}} \exp\left(O\left( \frac{|n-t|^3}{t^2} \right)\right),  
\end{align*}
where we used $(1-\delta)t \le n \le (1+\delta)t$ and 
\begin{align*}
	-\frac{x^2}{2n} &= -\frac{x^2}{2t} + \frac{x^2}{2t}\left(1 - \frac{t}{n}\right) \le -\frac{x^2}{2t} + \frac{x^2|n-t|}{2t(1-\delta)t} 
	= -\frac{x^2}{2t} + 
	 \frac{\eps|n-t|^2}{4(1-\delta)t} + O\left( \frac{|x|^3}{t^2}\right).
\end{align*} 
Now take $\delta$ small enough so that $O(|n-t|^3/t^2) \le (n-t)^2/(8t)$ whenever $|n-t| \le \delta t$ and subsequently select $\eps$ so that $\eps < \min\{1-\delta, \rho/2\}$. Then,  
\[
	-\left(\frac{1}{2} - \frac{\eps}{4(1-\delta)} \right)\frac{(n-t)^2}{t} \le - \frac{(n-t)^2}{4t}, 
\]
and 
\begin{align*}
	&\sum_{|n-t| \le \delta t} \frac{e^{-\left(\frac{1}{2} - \frac{\eps}{4(1-\delta)} \right)\frac{(n-t)^2}{t}}}{\sqrt{(1-\delta)2\pi t}} \exp\left(O\left( \frac{|n-t|^3}{t^2} \right)\right) \\
	&\le  \frac{e^{O(1)}}{\sqrt{\pi t}} \sum_{|n-t| \le t^{2/3}} e^{-(n-t)^2/(4t)}  +  \frac{1}{\sqrt{\pi t}} \sum_{|n-t| > t^{2/3}} e^{-(n-t)^2/(8t)} = 1 + O\left(\frac{1}{\sqrt{t}}\right) = \exp\left(O\left(\frac{1}{\sqrt{t}}\right)\right). 
\end{align*}
Combining with everything above completes the proof. 
\end{proof}

The estimates in Lemmas \ref{GradDiff} and \ref{PointDiff} are updates of the second part of Theorem 2.3.6 and Proposition 2.4.1 in \cite{LawLim} for discrete-time random walks to continuous time random walks in one dimension.

\begin{lemma}\label{GradDiff}  
Define $\Delta_t(x) = P(\xi_t = x) - f_t(x)$ for $f_t(x) = (2\pi \sigma^2 t)^{-1/2}e^{-x^2/(2\sigma^2 t)}$. 
There is a constant $C \in(0,\infty)$ such that for all $t > 0$ and $x, y\in \Z$, 
\[	
	\left| \Delta_t(x+y) - \Delta_t(x)\right| \le \frac{C|y|}{t^2}. 
\]
\end{lemma}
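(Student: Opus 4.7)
The plan is to use Fourier inversion on the integer lattice. Since $\xi_t$ is integer-valued with characteristic function $E[e^{i\theta \xi_t}] = e^{t(\phi(\theta) - 1)}$, where $\phi(\theta) = \sum_i p_i e^{i\theta i}$ (extending \eqref{eq:contimemgf} to imaginary argument), inversion yields
\[
P(\xi_t = x) = \frac{1}{2\pi}\int_{-\pi}^{\pi} e^{-i\theta x}\, e^{t(\phi(\theta) - 1)}\, d\theta, \qquad f_t(x) = \frac{1}{2\pi}\int_{-\infty}^{\infty} e^{-i\theta x}\, e^{-\sigma^2 t\theta^2/2}\, d\theta.
\]
Subtracting and evaluating at $x+y$ and $x$ introduces a factor $(e^{-i\theta y} - 1)$, which I would bound by $|\theta y|$. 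This gives
\[
|\Delta_t(x+y) - \Delta_t(x)| \le \frac{|y|}{2\pi}\int_{-\pi}^{\pi} |\theta|\,\bigl|e^{t(\phi(\theta) - 1)} - e^{-\sigma^2 t\theta^2/2}\bigr|\, d\theta \; + \; \frac{|y|}{2\pi}\int_{|\theta| > \pi} |\theta|\, e^{-\sigma^2 t\theta^2/2}\, d\theta.
\]
The tail term is super-exponentially small in $t$, so the work lies in the compact integral.

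I would then split the range $[-\pi, \pi]$ into $\{|\theta| \le \delta\}$ and $\{\delta < |\theta| \le \pi\}$ for a small $\delta > 0$. By the symmetry $p_i = p_{-i}$, the function $\phi(\theta)$ is \emph{real-valued}, and Condition \ref{mgf} guarantees moments of all orders, so a fourth-order Taylor expansion gives
\[
\phi(\theta) - 1 = -\frac{\sigma^2\theta^2}{2} + g(\theta), \qquad g(\theta) = O(\theta^4).
\]
Choosing $\delta$ small enough so that $\phi(\theta) - 1 \le -\sigma^2\theta^2/4$ and $|g(\theta)| \le C\theta^4$ on $|\theta| \le \delta$, I would write
\[
e^{t(\phi(\theta) - 1)} - e^{-\sigma^2 t\theta^2/2} = e^{-\sigma^2 t\theta^2/2}\bigl(e^{tg(\theta)} - 1\bigr),
\]
and use the real-variable inequality $|e^z - 1| \le |z|e^{|z|}$ together with $t|g(\theta)| \le Ct\delta^2 \cdot \theta^2 \le (\sigma^2/8)\, t\theta^2$ to obtain, for $|\theta| \le \delta$,
\[
\bigl|e^{t(\phi(\theta) - 1)} - e^{-\sigma^2 t\theta^2/2}\bigr| \le C t\theta^4\, e^{-\sigma^2 t\theta^2/4}.
\]
Substituting $u = \theta\sqrt{t}$ yields
\[
\int_{-\delta}^{\delta} |\theta|\cdot C t\theta^4\, e^{-\sigma^2 t\theta^2/4}\, d\theta = Ct\int |u|^5 e^{-\sigma^2 u^2/4}\, du \cdot t^{-3} = O(t^{-2}),
\]
the desired rate.

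On the complementary range $\delta < |\theta| \le \pi$, irreducibility of $\{p_i\}$ on $\Z$ implies $\phi(\theta) < 1$ strictly for $\theta \in (-\pi,\pi]\setminus\{0\}$; by compactness there exists $\alpha > 0$ with $\phi(\theta) - 1 \le -\alpha$ on this set, so $|e^{t(\phi(\theta) - 1)} - e^{-\sigma^2 t\theta^2/2}| \le 2e^{-\alpha' t}$ and the contribution is exponentially small in $t$. Combining the two ranges yields $|\Delta_t(x+y) - \Delta_t(x)| \le C|y|/t^2$ for $t \ge 1$; the trivial bound $|\Delta_t| \le 1 + f_t(0)$ absorbs small $t$ into the constant.

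The main obstacle is the near-origin estimate: I need the Taylor smallness $g(\theta) = O(\theta^4)$ to survive after exponentiation by $t$, which is only possible because the negative quadratic term $-\sigma^2 t\theta^2/2$ dominates for small $\delta$. Getting the rate $t^{-2}$ (rather than $t^{-3/2}$, which is the size of $\Delta_t$ itself) hinges on the extra factor $|\theta|$ supplied by $|e^{-i\theta y} - 1| \le |\theta y|$, and the whole argument rests crucially on the symmetry of $\{p_i\}$, which makes $\phi(\theta)$ real so that no imaginary $O(\theta^3)$ term appears in the expansion.
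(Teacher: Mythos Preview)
Your proof is correct and follows essentially the same Fourier-inversion route as the paper's. The paper packages the near-origin analysis by invoking a representation lemma from Lawler--Limic (their Lemma 2.3.4), which amounts to the same Taylor expansion $\phi(\theta)-1=-\sigma^2\theta^2/2+O(\theta^4)$ after the substitution $s=\theta\sqrt{t}$; you simply carry this out explicitly, including the splitting $|\theta|\le\delta$ versus $\delta<|\theta|\le\pi$ and the compactness argument for the latter range. The decisive step in both arguments is the same: the factor $|e^{-i\theta y}-1|\le|\theta||y|$ supplies one extra power of $|\theta|$, turning what would be an $O(t^{-3/2})$ bound into $O(t^{-2})$.
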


\begin{proof} It suffices to show the case when $y=1$ (and then apply the triangle inequality). 
We first claim that for each $\eps > 0$, there is a function $u(x,t)$ such that 
\begin{equation}\label{eq:charfuncclaim}
	P(\xi_t = x) = f_t(x) + u(x,t) + \frac{1}{2\pi\sqrt{t}}\int_{|s| \le \eps\sqrt{t}} e^{-ixs/\sqrt{t} - s^2/(2\sigma^2)}F_t(s)\,ds, 
\end{equation}
where, for some $\gamma , C> 0$, $|u(x,t)| \le Ce^{-\gamma t}$ and $|F_t(x)| \le C|x|^4t^{-1}$. This is Lemma 2.3.4 in \cite{LawLim} with $n$ replaced by $t$, for which the exact same proof holds using \eqref{eq:contimemgf}. 
Using \eqref{eq:charfuncclaim}, we have that 
\begin{align*}
	\left|\Delta_t(x+1) - \Delta_t(x)\right|&= \left| P(\xi_t = x+1) - f_t(x+1) - P(\xi_t = x) + f_t(x)\right| \\
	&\le \frac{1}{2\pi\sqrt{t}} \int_{|s| \le \eps\sqrt{t}} \left| e^{-i(x+1)s/\sqrt{t}} - e^{-ixs/\sqrt{t}}\right| e^{-s^2/(2\sigma^2)}|F_t(x)|\,ds +Ce^{-\gamma t}\\
	&= \frac{1}{2\pi\sqrt{t}} \int_{|s| \le \eps\sqrt{t}} \left| e^{-is/\sqrt{t}} - 1\right| e^{-s^2/(2\sigma^2)}|F_t(x)|\,ds +Ce^{-\gamma t}\\
	&\le \frac{C}{2\pi t^2} \int_{-\infty}^\infty |s|^5e^{-s^2/(2\sigma^2)}\,ds + Ce^{-\gamma t} = O\left(\frac{1}{t^2}\right). \qedhere
\end{align*}
\end{proof}

\begin{lemma}\label{PointDiff}
There is a constant $C \in(0,\infty)$ such that 
\[
	\sum_{x\in\Z} \left|P(\xi_t = x) - P(\xi_t = x+y)\right| \le \frac{C|y|}{\sqrt{t}}. 
\]
\end{lemma}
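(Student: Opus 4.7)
My plan is to reduce to the case $y=1$, then split $P(\xi_t=x) = f_t(x) + \Delta_t(x)$ as in Lemma \ref{GradDiff} and treat the Gaussian and remainder contributions separately. The reduction to $y=1$ is straightforward: by telescoping and the triangle inequality,
\[
	|P(\xi_t = x) - P(\xi_t = x+y)| \le \sum_{k=0}^{|y|-1} |P(\xi_t = x+k+1) - P(\xi_t = x+k)|,
\]
so summing over $x \in \Z$ and relabeling gives $\sum_{x}|P(\xi_t = x) - P(\xi_t = x+y)| \le |y|\sum_x |P(\xi_t = x+1) - P(\xi_t = x)|$. It therefore suffices to prove the bound $\sum_x |P(\xi_t = x+1) - P(\xi_t = x)| \le C/\sqrt{t}$.

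For the Gaussian piece, I would use that $f_t(u) = (2\pi \sigma^2 t)^{-1/2}e^{-u^2/(2\sigma^2 t)}$ is smooth with $|f_t'(u)| = |u|(\sigma^2 t)^{-1}f_t(u)$. Writing $|f_t(x+1) - f_t(x)| \le \int_x^{x+1}|f_t'(u)|\,du$ and summing over $x\in\Z$, the intervals tile $\R$ exactly once, so
\[
	\sum_{x\in \Z}|f_t(x+1) - f_t(x)| \le \int_{-\infty}^\infty |f_t'(u)|\,du = \frac{1}{\sigma^2 t}\int_{-\infty}^\infty |u|f_t(u)\,du = \sqrt{\frac{2}{\pi \sigma^2 t}} = O(t^{-1/2}).
\]

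For the remainder piece, the natural bound from Lemma \ref{GradDiff} is $|\Delta_t(x+1) - \Delta_t(x)| \le C/t^2$ per site, which is not directly summable. I would therefore truncate to $|x| \le M$ for a suitable $M = M(t)$. On the truncated range, the uniform bound yields $\sum_{|x|\le M}|\Delta_t(x+1) - \Delta_t(x)| \le 2MC/t^2$, and on the tails,
\[
	\sum_{|x|>M}|\Delta_t(x+1) - \Delta_t(x)| \le 2\sum_{|x|>M-1}\bigl[P(\xi_t = x) + f_t(x)\bigr] \le 2P(|\xi_t|>M-1) + 2\!\!\int_{|u|>M-1}\!\!f_t(u)\,du.
\]
Choosing $M = t^{3/2}$ makes the truncated sum of order $t^{-1/2}$. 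The tail of $f_t$ at this scale is $O\bigl(t^{-3/2}e^{-t^{2}/(2\sigma^{2})}\bigr)$, negligible. For the random walk tail at this scale (which is outside the regime $x \le \sigma^2\theta t$ of Lemma \ref{RWchern} for large $t$), I would invoke a fixed-$\lambda$ Chernoff bound directly from Condition \ref{mgf}: by \eqref{eq:contimemgf}, $E[e^{\theta \xi_t}] = e^{Kt}$ with $K = \sum_i(e^{\theta i}-1)p_i$, whence $P(\xi_t > M) \le e^{Kt - \theta M}$, which for $M = t^{3/2}$ decays super-polynomially in $t$. Combining everything, the remainder contribution is also $O(t^{-1/2})$, completing the proof.

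The only genuinely delicate point is handling the tail probability $P(|\xi_t|>t^{3/2})$ beyond the regime covered by Lemma \ref{RWchern}; everything else amounts to combining Lemma \ref{GradDiff} with elementary Gaussian estimates. Since Condition \ref{mgf} supplies a uniform exponential moment, this obstacle is essentially cosmetic and handled by the fixed-$\lambda$ Markov bound above.
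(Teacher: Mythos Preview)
Your proof is correct and follows essentially the same route as the paper, which simply defers to the argument for Proposition~2.4.1 in Lawler--Limic: reduce to $|y|=1$, split $P(\xi_t=x)=f_t(x)+\Delta_t(x)$, handle the Gaussian increments by integrating $|f_t'|$, and control the $\Delta_t$ increments via the uniform gradient bound of Lemma~\ref{GradDiff} on a truncated range together with crude tail estimates outside. Your truncation at $M=t^{3/2}$ and the fixed-$\lambda$ Chernoff bound for $P(|\xi_t|>t^{3/2})$ are perfectly adequate substitutes for whatever cutoff Lawler--Limic use; nothing substantive differs.
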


\begin{proof} Given Lemma \ref{GradDiff}, the proof follows line by line as with Proposition 2.4.1 in \cite{LawLim}. 
\end{proof}

\section*{Funding}  The authors were supported partially by grant ARO W911NF-181-0311.

\end{document}